\documentclass[11pt]{amsart}
\usepackage{bm}
\usepackage{chngpage}
\usepackage{graphicx}
\usepackage{eucal}
\usepackage{amssymb}
\usepackage{color,xcolor}
\usepackage{graphicx,epstopdf}
\usepackage{epsfig}
\usepackage{subfig}
\usepackage{caption}
\usepackage{mathrsfs}
\usepackage{tikz}
\usepackage{hyperref}
\usepackage{tikz-cd}
\usepackage{comment}
\usepackage[misc,geometry]{ifsym}

\newcommand\ran{\mathcal{R}}
\renewcommand\ker{\mathcal{N}}

\headheight 12pt
\headsep    16pt
\topskip =  12pt
\footskip 16pt

\footnotesep 6pt \skip\footins 12pt plus 6pt minus 4pt

\textwidth      15cm
\textheight     21.5cm
\oddsidemargin  0.94cm
\evensidemargin 0.94cm
\topmargin      1.0cm
\setcounter{topnumber}{3}

\setcounter{bottomnumber}{2}

\setcounter{totalnumber}{4}

\setcounter{dbltopnumber}{3}

\floatsep 12pt plus 6pt minus 1pt
\textfloatsep 18pt plus 6pt minus 3pt
\intextsep 18pt plus 3pt minus 2pt

\allowdisplaybreaks


\newtheorem{theorem}{Theorem}[section]

\newtheorem{scheme}{Scheme}
\theoremstyle{definition}

\theoremstyle{remark}
\newtheorem{remark}[theorem]{Remark}
\newtheorem{lemma}{Lemma}[section]

\numberwithin{equation}{section}%
\numberwithin{table}{section}%
\numberwithin{figure}{section}

\def\3bar{{|\hspace{-.02in}|\hspace{-.02in}|}}

\def\d{\text{d}}

\def\d{\mathrm{d}}

\def\d{\mathrm{d}}

\def\curl{\text{curl}}

\def\div{\operatorname{div}}
\def\curl{\operatorname{curl}}

\def\3bar{{|\hspace{-.02in}|\hspace{-.02in}|}}

\newcommand\grad{\operatorname{grad}}
\renewcommand\div{\operatorname{div}}

\def\d{\text{d}}

\headheight 12pt
\headsep    16pt
\topskip =  12pt
\footskip 16pt

\footnotesep 6pt \skip\footins 12pt plus 6pt minus 4pt

\textwidth      15cm
\textheight     21.5cm
\oddsidemargin  0.94cm
\evensidemargin 0.94cm
\topmargin      1.0cm

\setcounter{topnumber}{3}

\setcounter{bottomnumber}{2}

\setcounter{totalnumber}{4}

\setcounter{dbltopnumber}{3}

\floatsep 12pt plus 6pt minus 1pt
\textfloatsep 18pt plus 6pt minus 3pt
\intextsep 18pt plus 3pt minus 2pt

\allowdisplaybreaks



\def\3bar{{|\hspace{-.02in}|\hspace{-.02in}|}}

\def\d{\text{d}}

\newcommand\rot{\operatorname{rot}}
\renewcommand\div{\operatorname{div}}

\begin{document}
\title[Spurious solutions for high order curl problems]{Spurious solutions for high order curl problems}
	
\keywords
{$H(\curl^2)$-conforming,  finite elements,  spurious solution, de Rham complexes, exterior calculus, high order curl problems.}

\author{Kaibo Hu}
\email{kaibo.hu@maths.ox.ac.uk}
\address{Mathematical Institute, University of Oxford, Oxford
OX2 6GG, UK}
\author{Qian Zhang (\Letter)}
\email{qzhang15@mtu.edu}
\address{Department of Mathematical Sciences, Michigan Technological University, Houghton, MI 49931, USA. }
\author{Jiayu Han}
\email{hanjiayu@csrc.ac.cn}
\address{Beijing Computational Science Research Center, Beijing, 100193, China;
School of Mathematical Sciences, Guizhou Normal University, 550001, China.
}
\author{Lixiu Wang}
\email{lxwang@csrc.ac.cn}
\address{School of Mathematics and Physics, University of Science and Technology Beijing, Beijing 100083, China;
Beijing Computational Science Research Center, Beijing, 100193, China.
}

\author{Zhimin Zhang}
\email{zmzhang@csrc.ac.cn; ag7761@wayne.edu}

\address{Beijing Computational Science Research Center, Beijing, China; Department of Mathematics, Wayne State University, Detroit, MI 48202, USA}
\thanks{This work is supported in part by the National Natural Science Foundation of China grants NSFC 11871092 (Zhimin Zhang), NSAF U1930402 (Zhimin Zhang), NSFC 12001130 (Jiayu Han), and NSFC 12101036 (Lixiu Wang).}

\subjclass[2000]{65N30 \and 35Q60 \and 65N15 \and 35B45}

\maketitle
\begin{abstract}
We investigate numerical solutions of high order $\curl$ problems with various  formulations and finite elements. We show that several classical conforming finite elements lead to spurious solutions, while mixed  formulations with finite elements in complexes solve the problems correctly. 
 To explain the numerical results, we clarify the cohomological structures in high order $\curl$ problems by relating the partial differential equations to the Hodge-Laplacian boundary problems of the $\grad\curl$-complexes.
\end{abstract}


\section{Introduction} 

Spurious solutions usually refer to numerical solutions that converge to a ``wrong solution'' of partial differential equations (PDEs) or variational problems. Such solutions are dangerous for applications because of the lack of a visible condition (e.g., non-convergence or instability) to tell them from the correct solutions. Spurious solutions of eigenvalue problems are called ``the plague'' \cite{bossavit1990solving} (or, sometimes called the ``vector parasites'' \cite{sun1995spurious}) since the early days of computational electromagnetism. Several possible reasons were conjectured, for example, violation of the divergence-free condition of the discrete magnetic field (see, e.g., \cite{bossavit1990solving,sun1995spurious,winkler1984elimination} and the references therein). In \cite{sun1995spurious}, Sun et al. pointed out that the cause of the problem is not that the precise solenoidal condition fails at the discrete level, but rather a poor discretization of the kernel and range of the operators. In a modern language, this means that the Lagrange finite element fails to fit in a de~Rham complex with local finite element spaces. In \cite{sun1995spurious} the authors also discussed a smoother de~Rham complex (Stokes complex) on the Powell-Sabin split \cite{guzman2020exact} and showed that the Lagrange vector element  avoids spurious solutions on this mesh. This demonstrates that the $C^{0}$ continuity of the Lagrange elements is not an obstacle for eigenvalue problems as long as the underlying complexes exist and appropriate schemes are used (see also \cite{duan2019new,boffi2020convergence}). 

Spurious solutions also appear in source problems. Solving the vector Laplacian problem using the primal formulation and the Lagrange finite element leads to wrong solutions on an annulus or an L-shape domain \cite{arnold2018finite,arnold2010finite}. On an annulus domain,  the Lagrange elements cannot approximate the harmonic forms due to the lack of a cohomological structure at the discrete level. For the $L$-shape domain, the problem  lies in an approximation issue. Since $[H^{1}]^{3}\cap H_{0}(\div)$ is a closed subspace in $H(\curl)\cap H_{0}(\div)$ which in general is not dense in the norm defined by the primal formulation, numerical solutions of the primal formulation with $C^{0}$ elements cannot approximate true solutions with singularity. 

For the Maxwell equations, the N\'ed\'elec element fixes both issues: on the one hand,  it fits into a de~Rham sequence with finite element spaces; on the other hand, generally the N\'ed\'elec element does not belong to $H^{1}$ \cite{boffi2010finite}. So the finite element scheme converges correctly to singular solutions. The success of the N\'ed\'elec element inspired the development of discrete differential forms \cite{bossavit1988whitney,hiptmair2002finite} and the numerical tests on spurious solutions demonstrate the power of the finite element exterior calculus (FEEC) \cite{arnold2018finite,arnold2006finite,arnold2010finite}.

Recently, high order problems involving the fourth order curl operator  $(\curl)^4$, or its variant,  $- \!\curl \!\Delta\!\curl$, draw attention. These operators and PDEs appear in some magnetohydrodynamics and continuum models \cite{Cakoni2017A,chacon2007steady,mindlin1962effects,Monk2012Finite,park2008variational}.  It is speculated that spurious solutions may appear in certain discretizations. For example, Zhang and Zhang \cite{zhang2020order} anticipated that spurious solutions may appear in the 3D quad-curl source problem due to an approximation issue. Delicate finite elements and mixed schemes were constructed for solving high order curl problems (see, e.g., \cite{Brenner2017Hodge,cao2021error,huang2020nonconforming,sun2016mixed,zhang2018mixed,zheng2011nonconforming}). Nevertheless, to the best of our knowledge, there are no numerical examples and detailed analysis to support the speculation of spurious solutions. For vector Laplacian problems, after showing that the N\'ed\'elec element avoids spurious solutions, Bossavit \cite{bossavit1990solving} asked the question: can we rigorously prove that spurious modes necessarily appear in certain numerical methods.  To the best of our knowledge, this question is still open for both the vector Laplacian and higher order problems. 

In this paper, we provide numerical evidence and analysis to show that spurious solutions do exist for high order curl eigenvalue and source problems. This is for several reasons, for example, the lack of the underlying cohomological structures and problems caused by low regularity. We will show that the high order curl problems are naturally related to the $\grad\curl$-complex \cite[(46)]{arnold2020complexes}, and FEEC formulations fix the spurious modes. 
Specifically, we compare the primal and/or mixed formulations with the $H(\grad\curl)$-conforming element \cite{hu2020simple}, the $H^{1}(\curl)$-conforming element \cite{falk2013stokes}, and the $H^{2}$-conforming (Argyris) element. As we shall see, mixed formulations with finite elements that fit into complexes (c.f.,\cite{hu2020family,hu2020simple,neilan2015discrete,zhang2020curl}) lead to correctly convergent solutions, while other combinations may produce spurious solutions. 

We also remark that for the biharmonic equation 
\begin{equation}\label{biharmonic}
\Delta^{2}u=f,
\end{equation}
 another type of spurious solutions may appear \cite{gerasimov2012corners,zhang2008invalidity} if one decomposes the problem and seeks $u, \sigma \in H^{1}$ which solves 
 \begin{equation}\label{biharmonic-mixed}
 \Delta u =\sigma, \quad \Delta \sigma=f.
 \end{equation} 
 This is because the solution for \eqref{biharmonic} from a primal weak form is in $H^{2}$, while on domains with corners the solution $u$ from \eqref{biharmonic-mixed} may not be in $H^{2}$. Thus mixed formulations based on \eqref{biharmonic-mixed} are not equivalent to \eqref{biharmonic} in general. Nevertheless, this is a difference at the continuous level and we do not pursue methods that decouple the high order curl operators, so we will not discuss this issue in this paper.

The rest of the paper will be organized as follows. In Section 2, we show numerical examples of the spurious solutions by comparing various schemes and finite elements. In Section 3, we relate the high order $\curl$ problem to the $\grad\rot$-complex and the Hodge-Laplacian boundary value problem. In Section 4, we show the convergence of the mixed formulations with finite element complexes \cite{hu2020simple} and analyze the spurious solutions. We provide concluding remarks in Section 5.


\section{Finite element discretization for high order curl problems}\label{experiments}

Unless otherwise specified, in this paper $\Omega$ is a bounded Lipschitz domain in two space dimensions (2D) with unit outward normal vector $\bm n$ and unit tangential vector $\bm \tau$  on its boundary $\partial\Omega$.
We adopt standard notation for Sobolev spaces such as $H^s(\Omega)$ with norm $\|\cdot\|_{s}$ and inner product $(\cdot,\cdot)_{s}$. When $s=0$, $H^0(\Omega)$ coincides with $L^2(\Omega)$, in which case we omit the subscript $0$ in the notation of the norm and the inner product. We use $H_0^s(\Omega)$ to denote the spaces with vanishing trace.

Define 
\begin{align*}
H_{\rot}(\grad\rot;\Omega)&:=\{\bm u\in H(\grad\rot;\Omega) : \rot\bm u=0 \mbox{ on } \partial \Omega\},\\
X_{\rot}&:=H_{\rot}(\grad\rot;\Omega)\cap H_0(\div;\Omega).
\end{align*}
We consider the following source problem on a simply-connected domain. Denote by $\mathbb V$ the space of vectors. 
For $\bm f\in L^2(\Omega)\otimes \mathbb V$ (tensor product of $L^2(\Omega)$ and $\mathbb V$), we seek $\bm u\in X_{\rot}$  such that 
\begin{equation}\label{prob-source1}
\begin{split}
-\curl \Delta \rot\bm u-\grad\div\bm u &=\bm f\ \text{in}\;\Omega,
\end{split}
\end{equation}
with the boundary conditions 
\begin{align}\label{bcs}
\Delta\rot\bm u=0,\ 
\rot\bm u=0, \text{ and }
\bm u\cdot\bm n=0\ \text{on}\;\partial \Omega.
\end{align}
The  primal variational formulation is to
seek $\bm u\in X_{\rot}$  such that
\begin{equation}\label{pvprob1}
\begin{split}
(\grad\rot\bm u,\grad\rot\bm v)+(\div\bm u,\div\bm v)&=(\bm f,\bm v), \quad \forall \bm v\in X_{\rot}.
\end{split}
\end{equation}
Let $\sigma=-\div\bm u$. Then the mixed variational formulation
seeks $(\bm u,\sigma)\in  H_{\rot}(\grad\rot;\Omega)\times H^1(\Omega)$ such that
\begin{equation}\label{mvprob1}
\begin{split}
(\grad\rot\bm u,\grad\rot\bm v)+(\grad\sigma,\bm v)&= (\bm f,\bm v), \quad \forall \bm v\in  H_{\rot}(\grad\rot;\Omega),\\
(\bm u,\grad \tau)-(\sigma,\tau)&=0,\quad \forall \tau\in H^1(\Omega).
\end{split}
\end{equation}

We also consider the corresponding eigenvalue problem on a general domain. The strong formulation is to seek $(\lambda,\bm u)\in \mathbb{R}\times X_{\rot}$  such that 
\begin{equation}\label{prob-eig}
-\curl \Delta \rot\bm u-\grad\div\bm u =\lambda\bm u\quad \text{in}\;\Omega,
\end{equation}
with the boundary conditions \eqref{bcs}.
The  primal variational formulation is to
seek $(\lambda;\bm u)\in \mathbb R\times X_{\rot}$ such that
\begin{equation}\label{pvprob-eig}
\begin{split}
(\grad\rot\bm u,\grad\rot\bm v)+(\div\bm u,\div\bm v)&=\lambda(\bm u,\bm v), \ \forall \bm v\in X_{\rot}.
\end{split}
\end{equation}
The mixed variational formulation
seeks $(\lambda,\bm u,\sigma)\in \mathbb R\times H_{\rot}(\grad\rot;\Omega)\times H^1(\Omega)$ s.t.
\begin{equation}\label{mvprob-eig}
\begin{split}
(\grad\rot\bm u,\grad\rot\bm v)+(\grad\sigma,\bm v)&= \lambda(\bm u,\bm v), \ \forall \bm v\in  H_{\rot}(\grad\rot;\Omega),\\
(\bm u,\grad \tau)-(\sigma,\tau)&=0,\ \forall \tau\in H^1(\Omega).
\end{split}
\end{equation}

We consider four finite element methods (FEMs) to solve the problems \eqref{prob-source1} and \eqref{prob-eig}: a primal formulation with the $H^{2}$-conforming Argyris element, a mixed formulation with the $\grad\rot$-conforming element \cite{WZZelement}, and the mixed and primal formulations with the $H^1(\rot)$-conforming element \cite{falk2013stokes}. Let $\mathcal T_h$ be a partition of the domain $\Omega$ consisting of shape regular  triangles, and let $\mathcal V$ be the set of vertices. We denote by $P_k$ the space of polynomials with degree no larger than $k$. For $K\in \mathcal T_h$, define
\begin{align*}
	\mathcal R_k(K)&:=\grad P_k(K)\oplus P_{k-1}(K) \bm x^{\perp}, \ k\geq 4,\\
	\mathcal W_k(K):=\Big\{\bm v\in P_k(K)&\otimes\mathbb R^2:\rot\bm v\in P_{k-3}(K)\cup \left[P_{k-1}(K)\cap H_0^1(K)\right]\Big\}, \ k\geq 4.
\end{align*}
\begin{remark}
	We can use the Poincar\'e operator $\mathfrak p u=\int_0^1u(t\bm x)t\bm x^{\perp}\d t$ to construct $\mathcal W_k(K)$. For example, when $k=4$,
	$\mathcal W_k(K)=\grad P_5(K)\oplus P_{1}(K) \bm x^{\perp}\oplus \{\mathfrak p(\lambda_1\lambda_2\lambda_3)\}$.
\end{remark}
The $H(\grad\rot)$- and $H^{1}(\rot)$- conforming finite element spaces on the partition $\mathcal T_h$ are listed as follows: 
\begin{align*}
V_h&=\big\{\bm{v}_h\in  H_{\rot}(\grad\rot;\Omega):\ \bm v_h|_K\in \mathcal{R}_k(K),\ \forall K\in\mathcal{T}_h\big\},\\
V^1_h=\big\{\bm{v}_h&\in  H^1(\rot; \Omega)\cap H_{\rot}(\grad\rot;\Omega):\ \bm v_h|_K\in \mathcal W_k(K),\ \forall K\in\mathcal{T}_h\big\}.
\end{align*}
\begin{remark}
	We can also choose $V_h$ as the other $\grad\rot$-conforming finite element spaces in \cite{hu2020simple}.
\end{remark}
We also define the following two finite element spaces for the mixed schemes.
\begin{align*}
		 &S_h=\{{w}_h\in  H^1(\Omega):\  w_h|_K\in P_k(K), \ \forall K\in\mathcal{T}_h\},\\
		 S_h^1=\{{w}_h\in  &H^2(\Omega):\  w_h|_K\in P_k(K), \ \forall K\in\mathcal{T}_h,\ w_h\in C^2(\mathcal V)\} \text{ for }k\geq 5.
\end{align*}

The vector-valued $H^2$-conforming finite element space is defined as
\[V_h^{Arg}=S_h^1\otimes \mathbb R^2.\]
Define
 \begin{align*}
 \mathring V^1_h&=\{\bm{v}_h\in V_h^1: \bm v_h\cdot\bm n=0 \text{ on }\partial \Omega\},\\
\mathring V_h^{Arg}&=\{\bm{v}_h\in V_h^{Arg}: \bm v_h\cdot\bm n=0\text{ and } \rot\bm v_h=0\text{ on }\partial \Omega\}.
\end{align*}

\subsection{Source problem}
We are in a position to present the four finite element schemes for the problem \eqref{prob-source1}. 
\begin{scheme}[Mixed formulation with the $H(\grad\rot)$-conforming element]\label{algo1}
Find $(\bm u_h,\sigma_{h}) \in  V_h\times S_{h}$  such that 
\begin{equation*}
\begin{split}
(\grad\rot\bm u_h,\grad\rot\bm v_h)+(\grad\sigma_h,\bm v_h)&= (\bm f,\bm v_h), \ \forall \bm v_h\in V_h,\\
(\bm u_h,\grad \tau_h)-(\sigma_h,\tau_h)&=0,\ \forall \tau_h\in S_h.
\end{split}
\end{equation*}
\end{scheme}
\begin{scheme}[Mixed formulation with the $H^1(\rot)$-conforming element]\label{algo2}
Find $(\bm u_h,\sigma_{h}) \in  V_h^1\times S_{h}^1$ such that 
\begin{equation*}
\begin{split}
(\grad\rot\bm u_h,\grad\rot\bm v_h)+(\grad\sigma_h,\bm v_h)&=(\bm f,\bm v_h), \ \forall \bm v_h\in V_h^1,\\
(\bm u_h,\grad \tau_h)-(\sigma_h,\tau_h)&=0,\ \forall \tau_h\in S_h^1.
\end{split}
\end{equation*}
\end{scheme} 		
\begin{scheme}[Primal formulation with the $H^1(\rot)$-conforming element]\label{algo3}
Find $\bm u_h \in \mathring V_h^{1}$  such that 
\begin{equation*}
\begin{split}
(\grad\rot\bm u_h,\grad\rot\bm v_h)+(\div\bm u_h,\div\bm v_h)&=(\bm f, \bm v_h),\quad \forall \bm v_h\in \mathring V_h^{1}.
\end{split}
\end{equation*}
\end{scheme}
\begin{scheme}[Primal formulation with the $H^2$-conforming (Argyris) element]\label{algo4}
Find $\bm u_h \in \mathring V_h^{Arg}$ such that 
\begin{equation*}
\begin{split}
(\grad\rot\bm u_h,\grad\rot\bm v_h)+(\div\bm u_h,\div\bm v_h)
&=(\bm f, \bm v_h),\quad \forall \bm v_h\in \mathring V_h^{Arg}.\end{split}
\end{equation*}
\end{scheme}
\begin{figure}[!h]
  \subfloat[Scheme \ref{algo1}]{
	\begin{minipage}[c][0.8\width]{
	   0.4\textwidth}
	   \centering
	   \includegraphics[width=1\textwidth]{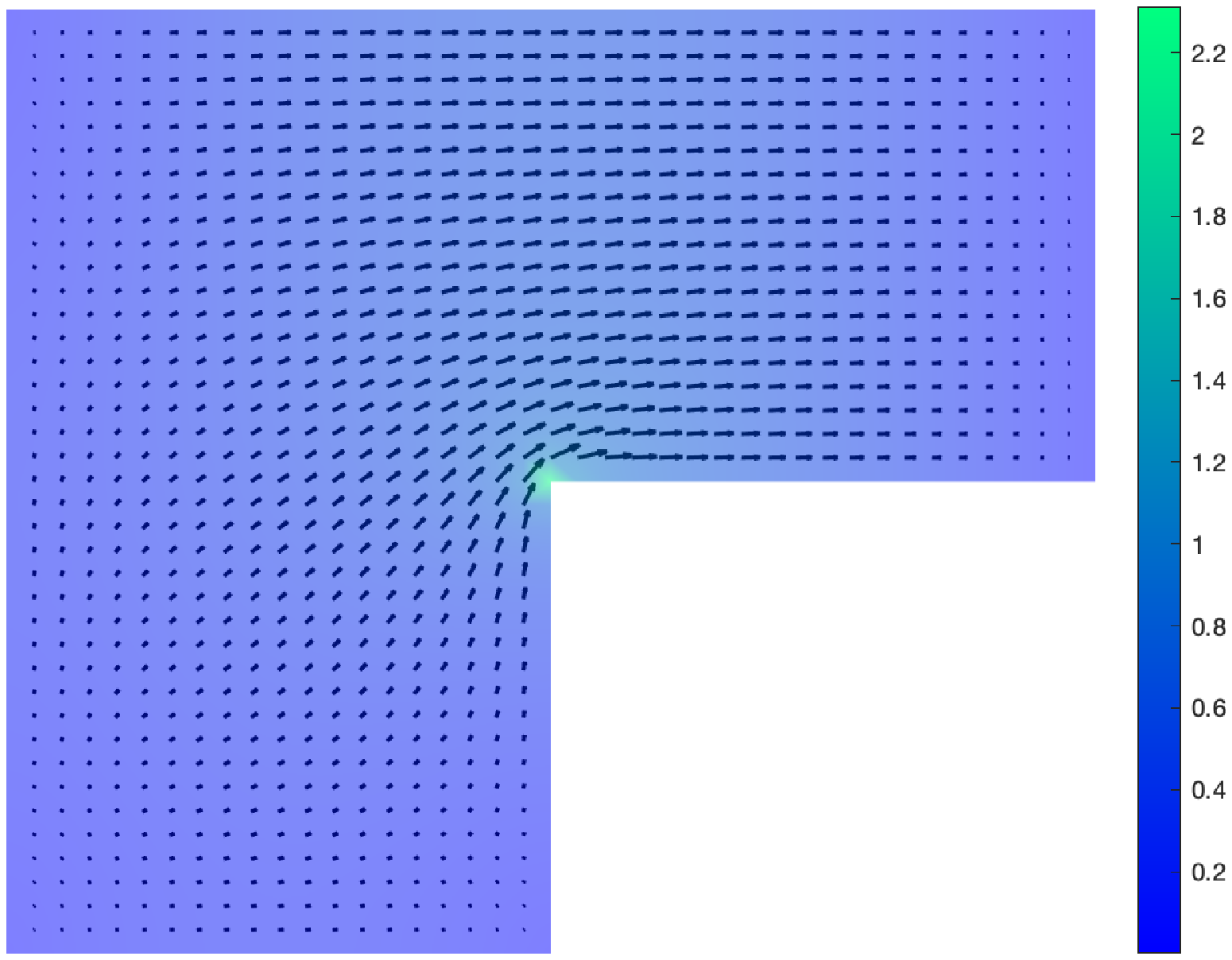}
	\end{minipage}}
 \hfill 	
    \subfloat[Scheme \ref{algo2}]{
	\begin{minipage}[c][0.8\width]{
	   0.4\textwidth}
	   \centering
	   \includegraphics[width=1\textwidth]{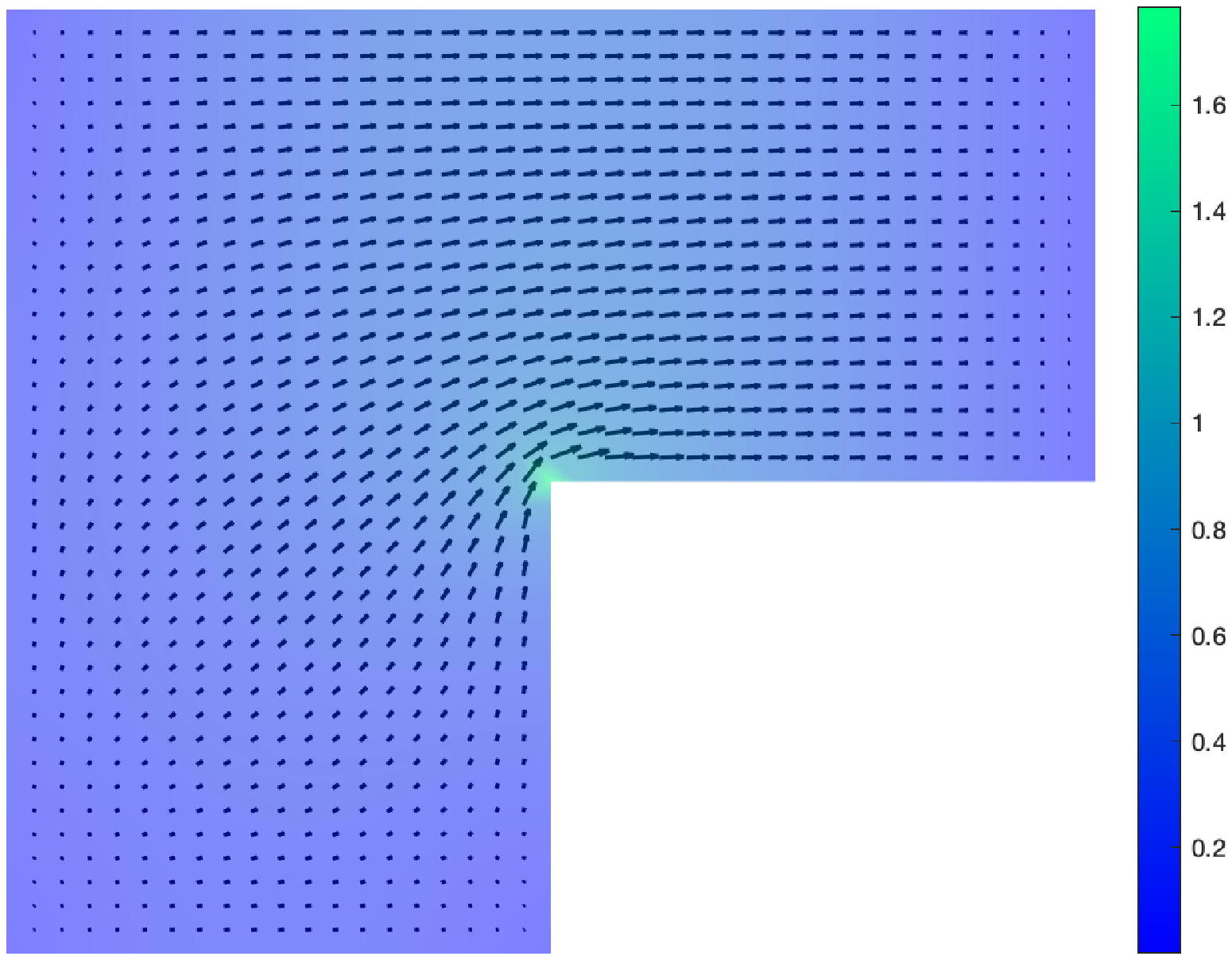}
	\end{minipage}}
	\hfill 
  \subfloat[Scheme \ref{algo3}]{
	\begin{minipage}[c][0.8\width]{
	   0.4\textwidth}
	   \centering
	   \includegraphics[width=1\textwidth]{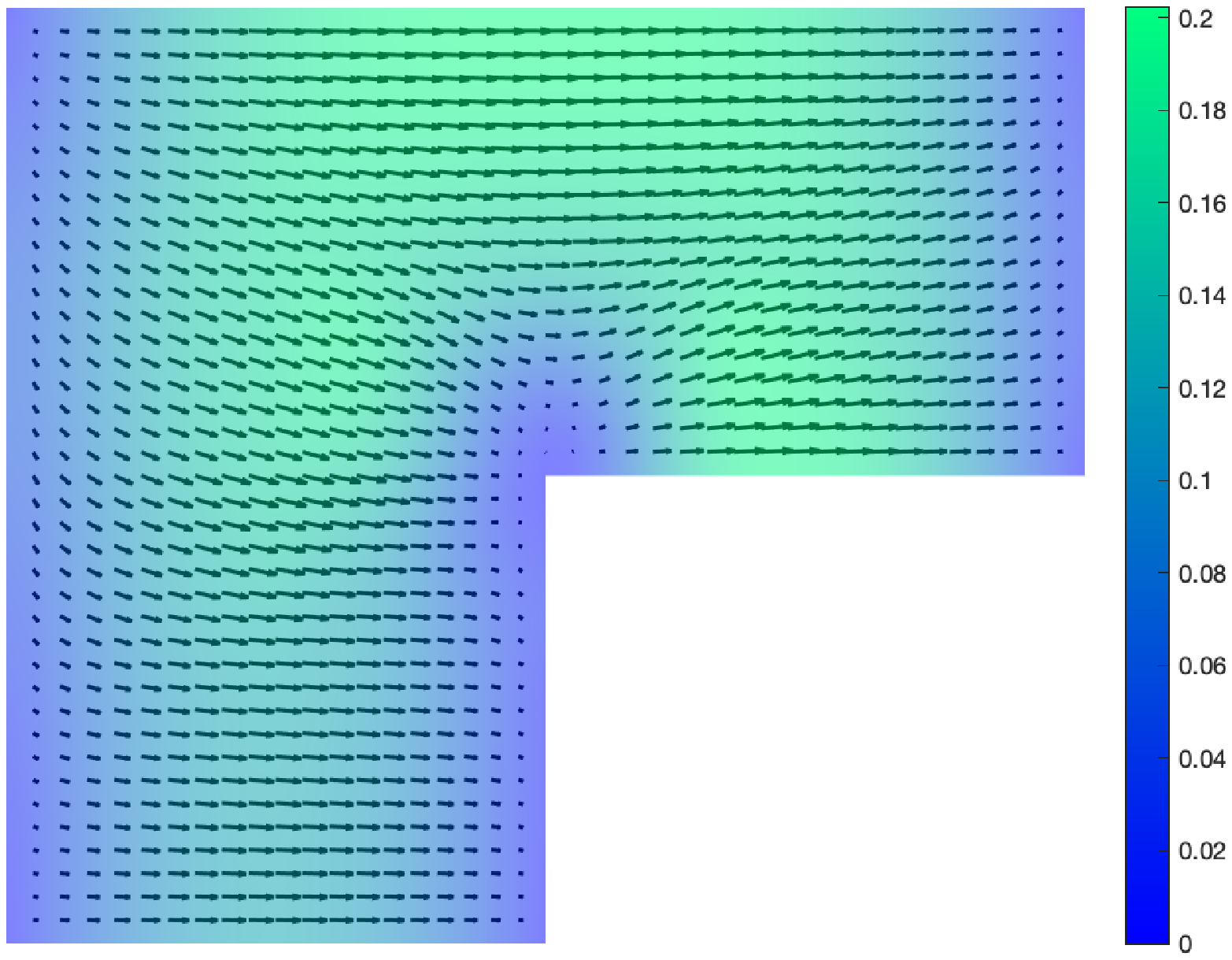}
	\end{minipage}}
	\hfill 
	\subfloat[Scheme \ref{algo4}]{
	\begin{minipage}[c][0.8\width]{
	   0.4\textwidth}
	   \centering
	   \includegraphics[width=1\textwidth]{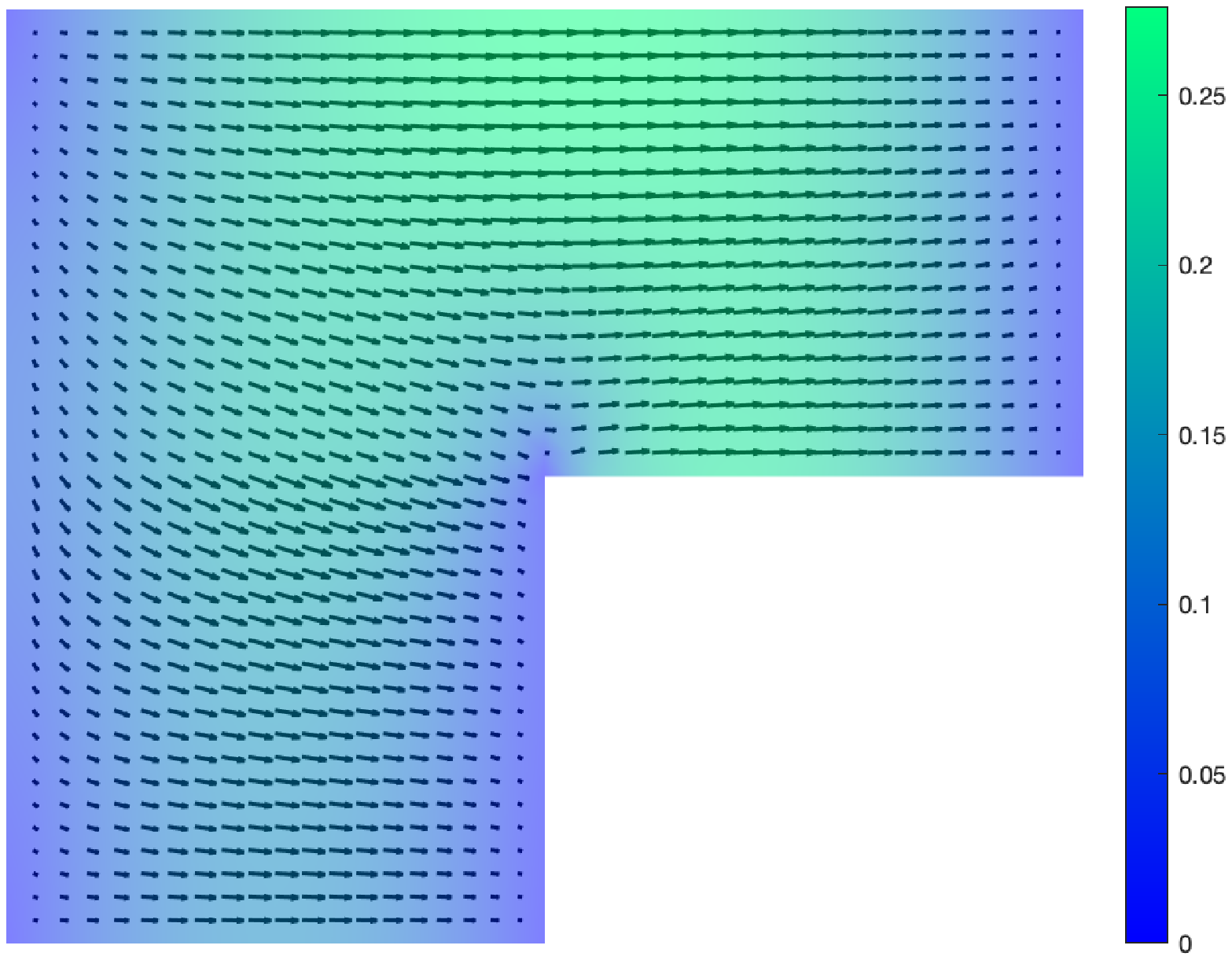}
	\end{minipage}}
	\hfill 
 \caption{Numerical solutions to the problem \eqref{prob-source1} on an L-shape domain with $\bm f=(1,0)^T$.}\label{fig-source}
\end{figure}

The numerical results for the Hodge-Laplacian source problem \eqref{prob-source1} with $\bm f=(1,0)^{\mathrm T}$ on an L-shape domain are shown in Figure \ref{fig-source}.
As we see in Figure \ref{fig-source}, the primal formulation with the Argyris and the $H^{1}(\rot)$-conforming finite elements show different solutions compared with the mixed formulations with the $H(\grad\rot)$- and $H^{1}(\rot)$-conforming elements. In fact, the primal formulation produces spurious solutions, which we will elaborate on in Section 4 by proving the convergence of the mixed formulations and explaining the reason of the spurious solutions.

\subsection{Eigenvalue problem}
Similar to the source problem, we consider the following four numerical schemes for the problem \eqref{prob-eig}.
\begin{scheme}[Mixed formulation with the $H(\grad\rot)$-conforming element]\label{algo5}
Find $(\lambda_h,\bm u_h,\sigma_{h}) \in  \mathbb R\times V_h\times S_{h}$,  such that
\begin{equation*}
\begin{split}
(\grad\rot\bm u_h,\grad\rot\bm v_h)+(\grad\sigma_h,\bm v_h)&= \lambda_h(\bm u_h,\bm v_h), \ \forall \bm v_h\in V_h,\\
(\bm u_h,\grad \tau_h)-(\sigma_h,\tau_h)&=0,\ \forall \tau_h\in S_h.
\end{split}
\end{equation*}
\end{scheme}
\begin{scheme}[Mixed formulation with the $H^1(\rot)$-conforming element]\label{algo6}
Find $(\lambda_h,\bm u_h,\sigma_{h}) \in \mathbb R \times V_h^1\times S_{h}^1$,  such that
\begin{equation*}
\begin{split}
(\grad\rot\bm u_h,\grad\rot\bm v_h)+(\grad\sigma_h,\bm v_h)&=\lambda_h(\bm u_h,\bm v_h), \ \forall \bm v_h\in V_h^1,\\
(\bm u_h,\grad \tau_h)-(\sigma_h,\tau_h)&=0,\ \forall \tau_h\in S_h^1.
\end{split}
\end{equation*}
\end{scheme}
\begin{scheme}[Primal formulation with the $H^1(\rot)$-conforming element]\label{algo7}
Find $(\lambda_h,\bm u_h) \in \mathbb R\times \mathring V_h^{1}$,  such that
\begin{equation*}
\begin{split}
(\grad\rot\bm u_h,\grad\rot\bm v_h)+(\div\bm u_h,\div\bm v_h)&=\lambda_h(\bm u_h,\bm v_h),\quad \forall \bm v_h\in \mathring V_h^{1}.
\end{split}
\end{equation*}
\end{scheme}
\begin{scheme}[Primal formulation with the $H^2$-conforming (Argyris) element]\label{algo8}
	Find $(\lambda_h,\bm u_h) \in \mathbb R\times \mathring V_h^{Arg}$,  such that
\begin{equation*}
\begin{split}
(\grad\rot\bm u_h,\grad\rot\bm v_h)+(\div\bm u_h,\div\bm v_h)
&=\lambda_h(\bm u_h,\bm v_h),\quad \forall \bm v_h\in \mathring V_h^{Arg}.\end{split}
\end{equation*}
\end{scheme}




We apply Schemes \ref{algo5} -- \ref{algo8} to solve the eigenvalue problem \eqref{prob-eig} on three different domains (see Figure \ref{fig1}):
\begin{itemize}
    \item  $\Omega_1=(0,1)\times (0,1)$.
    \item  $\Omega_2=(0,1) \times (0,1)\slash (1/3,3/4)\times(1/4,2/3)$.
    \item  $\Omega_3=(-1,1) \times (-1,1)\slash (0,1)\times(-1,0)$.
\end{itemize}
\begin{figure}[!ht]
	\centering
	\includegraphics[width=0.3\linewidth]{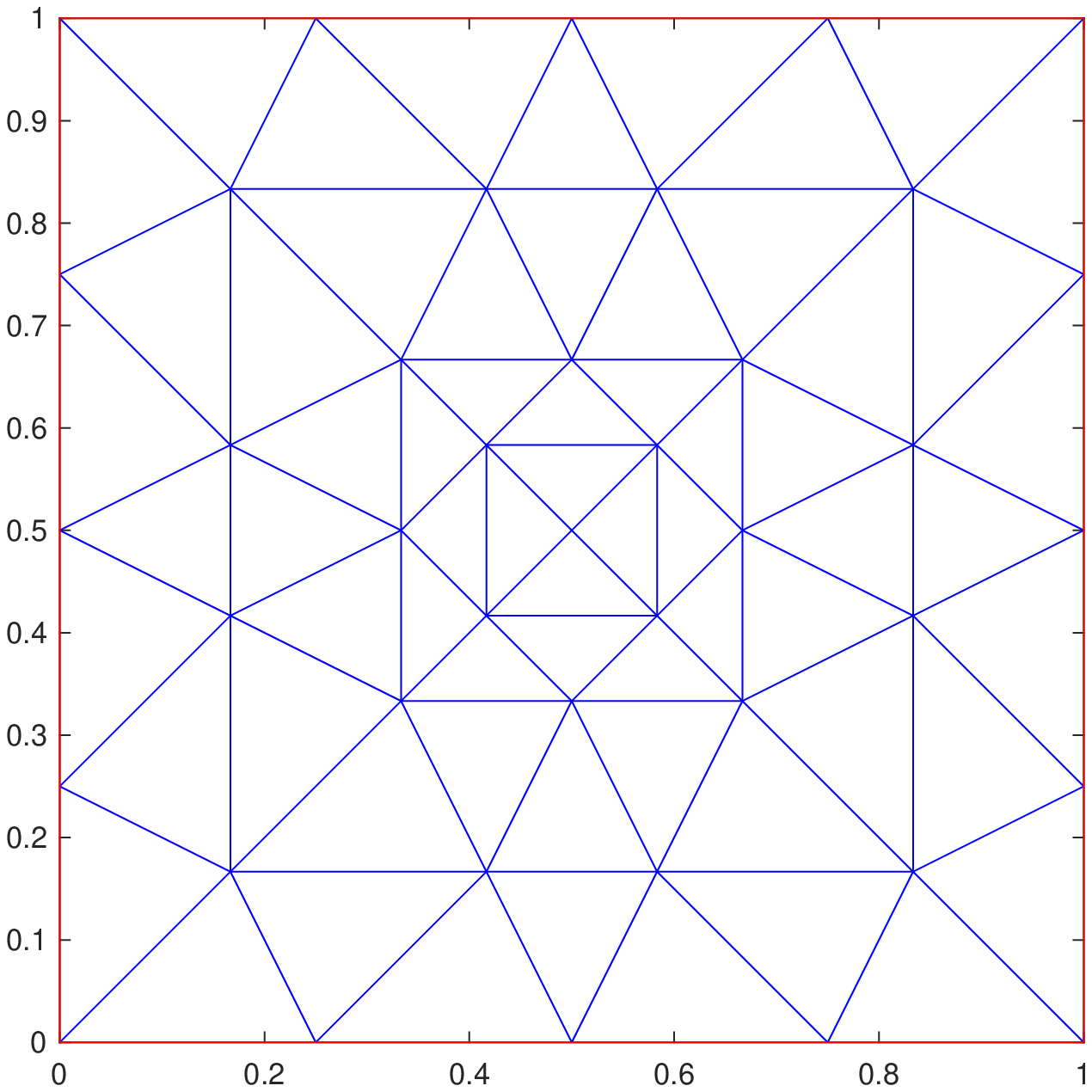}
	\includegraphics[width=0.3\linewidth]{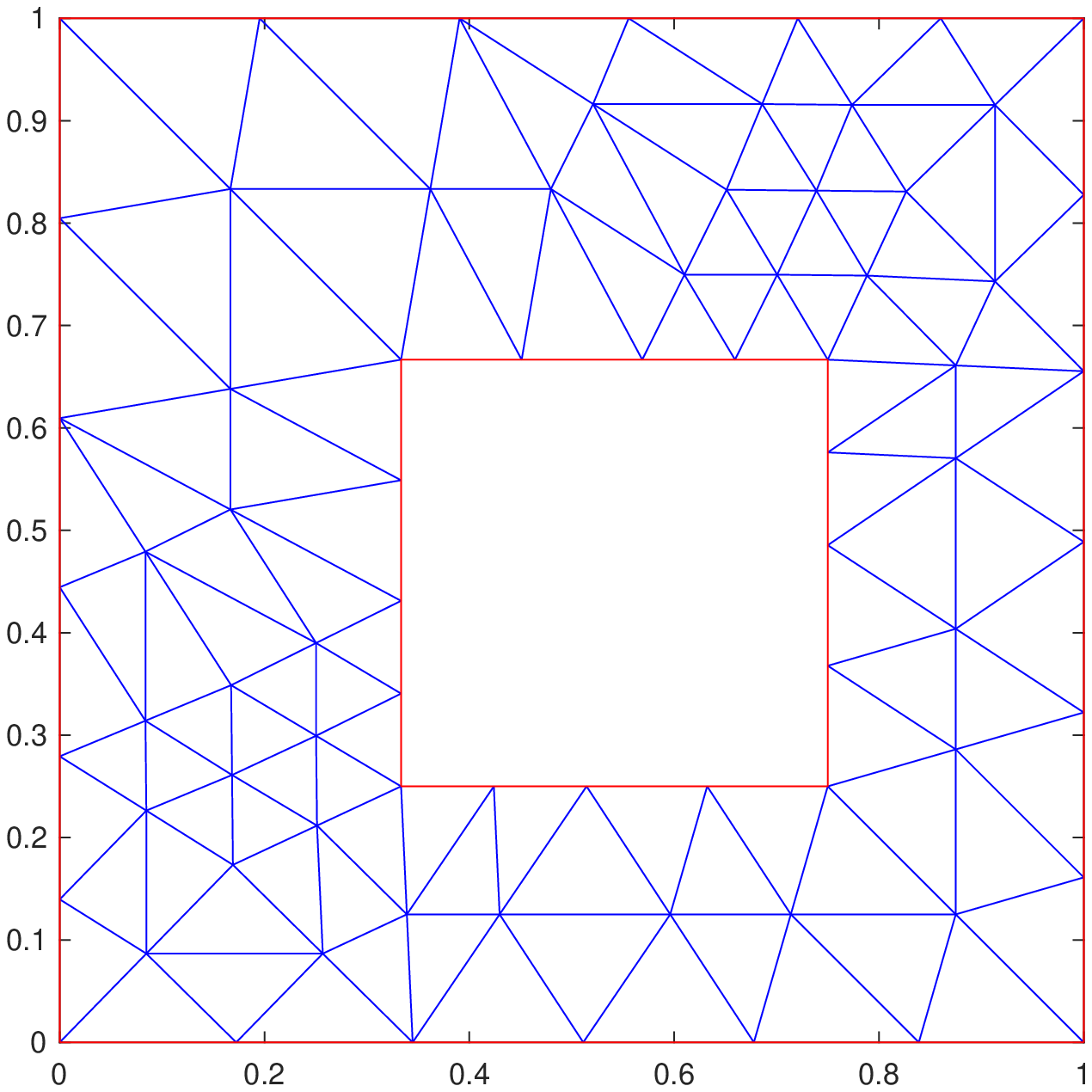}
		\includegraphics[width=0.3\linewidth]{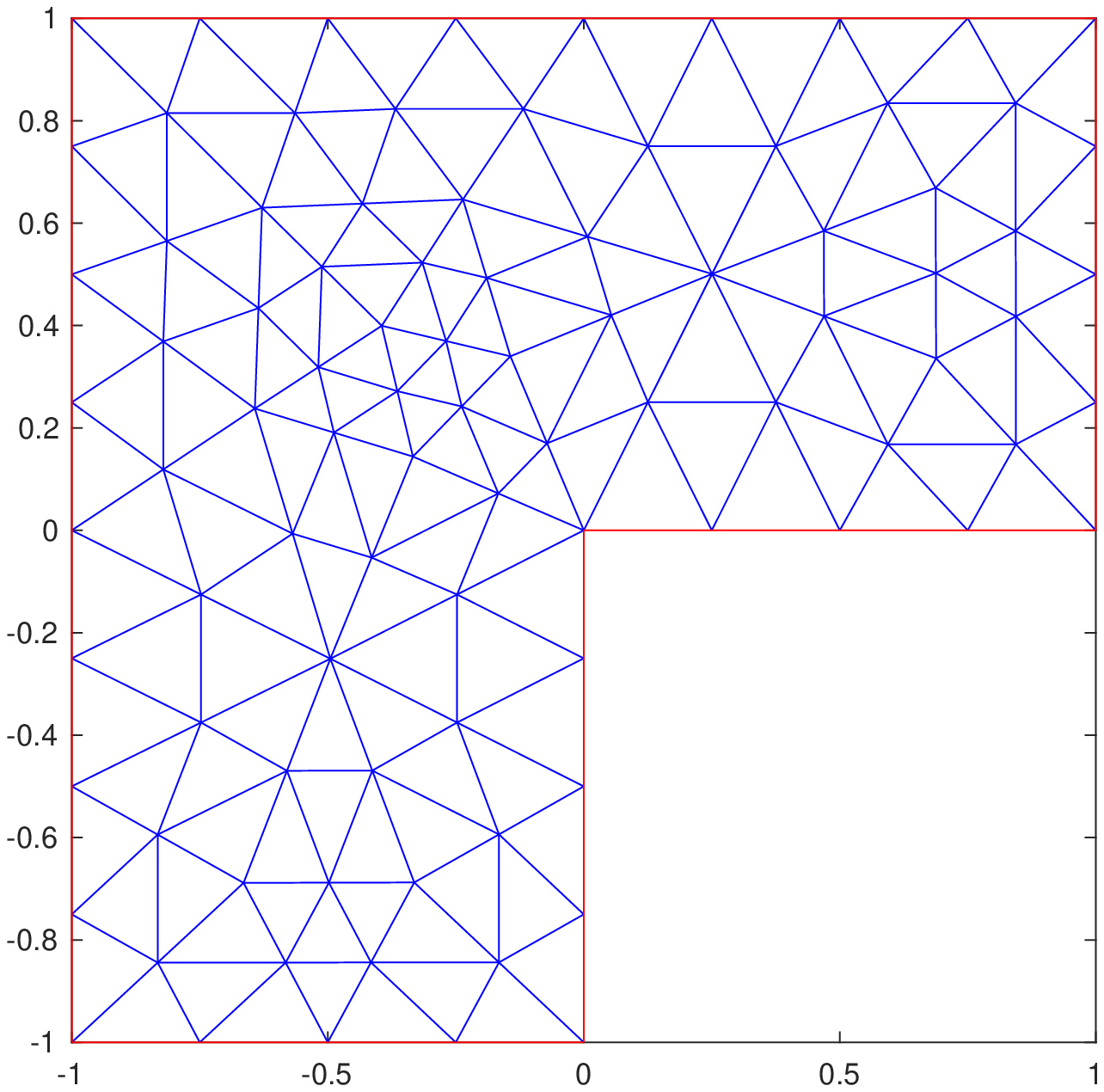}
	\caption{Initial meshes (refinement index $n=0$) for $\Omega_1$, $\Omega_2$, and $\Omega_3$}
	\label{fig1}
\end{figure}
We observe from Table \ref{tab-gradrot-omega1}--\ref{tab-argyris-omega3} that  the four schemes lead to the same numerical eigenvalues on $\Omega_1$ and different numerical eigenvalues on $\Omega_2$ and $\Omega_3$. 
We will prove in Section 4 that Scheme \ref{algo5} produces correctly convergent numerical eigenvalues on simply-connected domains, which implies that Scheme \ref{algo7} and Scheme \ref{algo8}  lead to spurious eigenvalues on $\Omega_3$.

\begin{table}[!ht]
	\centering
	\caption{Numerical eigenvalues with units $\pi^2$ on $\Omega_1$ obtained by {\bf Scheme \ref{algo5}} with $k=4$ for \eqref{prob-eig}} \label{tab-gradrot-omega1}
	    \begin{adjustwidth}{0cm}{0cm}
  {
	\begin{tabular}{ccccccccccc}
		\hline
		$n$&$\lambda_1$&$\lambda_2$&$\lambda_3$&$\lambda_4$&$\lambda_5$&$\lambda_6$&$\lambda_7$&$\lambda_8$\\ \hline
0&1.000000&   1.000000&   2.000000&   4.000001&   4.000001&   5.000002&   5.000002&   8.000011\\
1&1.000000&   1.000000&   2.000000&   4.000000&   4.000000&   5.000000&   5.000000&   8.000000\\
2&1.000000&   1.000000&   2.000000&   4.000000&   4.000000&   5.000000&   5.000000&   8.000000\\
      		\hline
	\end{tabular}
	    }
    \end{adjustwidth}
\end{table}  

\begin{table}[!ht]
	\centering
	\caption{Numerical eigenvalues with units $\pi^2$ on $\Omega_1$ obtained by {\bf Scheme \ref{algo6}} for \eqref{prob-eig}} \label{tab-mixed-omega1}
		    \begin{adjustwidth}{0cm}{0cm}
  {
	\begin{tabular}{ccccccccccc}
		\hline
		$n$&$\lambda_1$&$\lambda_2$&$\lambda_3$&$\lambda_4$&$\lambda_5$&$\lambda_6$&$\lambda_7$&$\lambda_8$\\
0&1.000000&   1.000000&   2.000000&   4.000000&   4.000000&   5.000000&   5.000000&   8.000001\\
1&1.000000&   1.000000&   2.000000&   4.000000&   4.000000&   5.000000&   5.000000&   8.000000\\
2&1.000000&   1.000000&   2.000000&   4.000000&   4.000000&   5.000000&   5.000000&   8.000000\\
         		\hline
	\end{tabular} }
    \end{adjustwidth}
\end{table}

\begin{table}[!ht]
	\centering
	\caption{Numerical eigenvalues with units $\pi^2$ on $\Omega_1$ obtained by {\bf Scheme \ref{algo7}} for \eqref{prob-eig}} \label{tab-primal-omega1}
			    \begin{adjustwidth}{0cm}{0cm}
  {
	\begin{tabular}{ccccccccccc}
		\hline
		$n$&$\lambda_1$&$\lambda_2$&$\lambda_3$&$\lambda_4$&$\lambda_5$&$\lambda_6$&$\lambda_7$&$\lambda_8$\\
0&1.000000&   1.000000&   2.000000&   4.000001&   4.000001&   5.000004&   5.000004&   8.000077\\
1&1.000000&   1.000000&   2.000000&   4.000000&   4.000000&   5.000000&   5.000000&   8.000000\\
2&1.000000&   1.000000&   2.000000&   4.000000&   4.000000&   5.000000&   5.000000&   8.000000\\
   		\hline
	\end{tabular}}
    \end{adjustwidth}
\end{table}

\begin{table}[!ht]
	\centering
	\caption{Numerical eigenvalues with units $\pi^2$ on $\Omega_1$ obtained by {\bf Scheme \ref{algo8}} for \eqref{prob-eig}} \label{tab-argyris-omega1}
			    \begin{adjustwidth}{0cm}{0cm}
  {\begin{tabular}{ccccccccc}
		\hline
		$n$&$\lambda_1$&$\lambda_2$&$\lambda_3$&$\lambda_4$&$\lambda_5$&$\lambda_6$&$\lambda_7$&$\lambda_8$\\
0&1.000000&   1.000000&   2.000000&   4.000000&   4.000000&   5.000000&   5.000000&   8.000003\\
1&1.000000&   1.000000&   2.000000&   4.000000&   4.000000&   5.000000&   5.000000&   8.000000\\
2&1.000000&   1.000000&   2.000000&   4.000000&   4.000000&   5.000000&   5.000000&   8.000000\\
   		\hline
	\end{tabular}}
    \end{adjustwidth}
\end{table}


\subsection{Eigenvalue problem with different boundary conditions}
Denote 
\[X=H(\grad\rot;\Omega)\cap H_0(\div;\Omega).\]
In this section, we consider another eigenvalue problem: find $(\lambda, \bm u)\in \mathbb R\times X$, 
such that 
\begin{equation}\label{prob2}
\begin{split}
-\curl\Delta\rot\bm u-\grad\div\bm u&=\lambda \bm u\ \text{in}\;\Omega,
\end{split}
\end{equation}
with the boundary conditions 
\begin{align*}
\Delta\rot\bm u=0,\ 
\grad\rot\bm u\cdot\bm n=0, \text{ and }
\bm u\cdot\bm n=0\ \text{on}\;\partial \Omega.
\end{align*}


The  primal variational formulation is to find $(\lambda, \bm u)\in \mathbb R\times X$ s.t.
\begin{equation}
\begin{split}
(\grad\rot\bm u,\grad\rot\bm v)+(\div\bm u,\div\bm v)&=\lambda (\bm u,\bm v), \ \forall \bm v\in X.
\end{split}
\end{equation}
The mixed variational formulation is to find $(\lambda, \bm u,\sigma)\in \mathbb R$ $\times H(\grad\rot;\Omega)$ $\times H^1(\Omega)$ s.t.
\begin{equation}
\begin{split}
(\grad\rot\bm u,\grad\rot\bm v)+(\grad\sigma,\bm v)&=\lambda (\bm u,\bm v), \ \forall \bm v\in H(\grad\rot;\Omega),\\
(\bm u,\grad \tau)-(\sigma,\tau)&=0,\ \forall \tau\in H^1(\Omega).
\end{split}
\end{equation}
We consider four numerical schemes similar to Scheme \ref{algo5}--\ref{algo8} but without the boundary condition $\rot\bm u_h=0$ on the three different domains. 
Again, we observe that the four schemes lead to different numerical solutions. In particular, from the mixed formulations we obtain one zero eigenvalue on $\Omega_{1}$, $\Omega_{3}$, and two zero eigenvalues on $\Omega_{2}$. On the other hand, other schemes do not produce zero numerical eigenvalues.  This difference will also be explained in Section 4.

\begin{table}[!ht]
	\centering
	\caption{Numerical eigenvalues with units $\pi^2$ on $\Omega_2$ obtained by {\bf Scheme \ref{algo5}}  with $k=4$ for \eqref{prob-eig}} \label{tab-gradrot-omega2}
			    \begin{adjustwidth}{0cm}{0cm}
  {\begin{tabular}{ccccccccccc}
		\hline
		$n$&$\lambda_1$&$\lambda_2$&$\lambda_3$&$\lambda_4$&$\lambda_5$&$\lambda_6$&$\lambda_7$&$\lambda_8$\\
0& 0.000000& 0.594212& 0.595733& 1.802009& 2.843750&  4.460286& 4.495899& 5.463774\\
1& 0.000000& 0.593616& 0.595336& 1.801970& 2.839489&  4.458673& 4.493247& 5.463500\\
2& 0.000000& 0.593379& 0.595179& 1.801959& 2.837796&  4.458048& 4.492200& 5.463407\\      		\hline
	\end{tabular}}
    \end{adjustwidth}
\end{table}   

\begin{table}[!ht]
	\centering
	\caption{Numerical eigenvalues with units $\pi^2$ on $\Omega_2$ obtained by {\bf Scheme \ref{algo6}} for \eqref{prob-eig}} \label{tab-mixed-omega2}
			    \begin{adjustwidth}{0cm}{0cm}
  {\begin{tabular}{ccccccccccc}
		\hline
		$n$&$\lambda_1$&$\lambda_2$&$\lambda_3$&$\lambda_4$&$\lambda_5$&$\lambda_6$&$\lambda_7$&$\lambda_8$\\
0& 0.000000& 0.596944& 0.597564& 1.802182& 2.863546&  4.467690& 4.508052& 5.465057\\
1& 0.000000& 0.594698& 0.596060& 1.802016& 2.847332&  4.461544& 4.498047& 5.463929\\
2& 0.000000& 0.593808& 0.595466& 1.801975& 2.840909&  4.459177& 4.494100& 5.463565\\
\hline
	\end{tabular}}
    \end{adjustwidth}
\end{table}

\begin{table}[!ht]
	\centering
	\caption{Numerical eigenvalues with units $\pi^2$ on $\Omega_2$ obtained by {\bf Scheme \ref{algo7}} for \eqref{prob-eig}} \label{tab-primal-omega2}
			    \begin{adjustwidth}{0cm}{0cm}
  {\begin{tabular}{ccccccccccc}
		\hline
		$n$&$\lambda_1$&$\lambda_2$&$\lambda_3$&$\lambda_4$&$\lambda_5$&$\lambda_6$&$\lambda_7$&$\lambda_8$\\
0& 2.645076 &3.202686 &   3.607223 &   4.369787 &   6.145767 &   7.964110 &   8.167482 &8.213072 \\
  1& 2.269742 &2.874862 &   3.141026 &   4.063906 &   5.846892 &   7.677691 &   7.894476 &7.971607 \\
  2& 2.065438 &2.689171 &   2.882076 &   3.886972 &   5.659409 &   7.489803 &   7.694887 &7.864160 \\
  3& 1.947637 &2.579732 &   2.731537 &   3.781333 &   5.542562 &   7.373284 &   7.571471 &7.797919 \\
   		\hline
	\end{tabular}}
    \end{adjustwidth}
\end{table}

\begin{table}[!ht]
	\centering
	\caption{Numerical eigenvalues with units $\pi^2$ on $\Omega_2$ obtained by {\bf Scheme \ref{algo8}} for \eqref{prob-eig}} \label{tab-argyris-omega2}
			    \begin{adjustwidth}{0cm}{0cm}
  {\begin{tabular}{ccccccccc}
		\hline
		$n$&$\lambda_1$&$\lambda_2$&$\lambda_3$&$\lambda_4$&$\lambda_5$&$\lambda_6$&$\lambda_7$&$\lambda_8$\\
0&2.996956 &   3.517662 &   3.946885 &   4.626899 &   6.440491 &   8.154377 &   8.449486 &   8.484641 \\
   1&2.462447 &   3.063195 &   3.349072 &   4.229950 &   6.010834 &   7.860474 &   8.069506 &   8.098664 \\
   2&2.178163 &   2.802148 &   3.008808 &   3.990078 &   5.760831 &   7.614372 &   7.802093 &   7.935650 \\
   3&2.015498 &   2.648156 &   2.808560 &   3.844587 &   5.606347 &   7.451910 &   7.636857 &   7.841693 \\
   		\hline
	\end{tabular}}
    \end{adjustwidth}
\end{table}

\begin{table}[!ht]
	\centering
	\caption{Numerical eigenvalues  with units $\pi^2$ on $\Omega_3$ obtained by {\bf Scheme \ref{algo5}} with $k=4$ for \eqref{prob-eig}} \label{tab-gradrot-omega3}
			    \begin{adjustwidth}{0cm}{0cm}
  {\begin{tabular}{ccccccccccc}
		\hline
		$n$&$\lambda_1$&$\lambda_2$&$\lambda_3$&$\lambda_4$&$\lambda_5$&$\lambda_6$&$\lambda_7$&$\lambda_8$\\
0&0.149678&   0.358073&   1.000000&   1.000000&   1.153997&   1.274383&   2.000000&   2.172031\\
1&0.149578&   0.358072&   1.000000&   1.000000&   1.153996&   1.274062&   2.000000&   2.171278\\
2&0.149538&   0.358072&   1.000000&   1.000000&   1.153996&   1.273934&   2.000000&   2.170978\\  
      		\hline
	\end{tabular}}
    \end{adjustwidth}
\end{table}   

\begin{table}[!ht]
	\centering
	\caption{Numerical eigenvalues  with units $\pi^2$ on $\Omega_3$ obtained by {\bf Scheme \ref{algo6}} for \eqref{prob-eig}} \label{tab-mixed-omega3}
			    \begin{adjustwidth}{0cm}{0cm}
  {\begin{tabular}{ccccccccccc}
		\hline
		$n$&$\lambda_1$&$\lambda_2$&$\lambda_3$&$\lambda_4$&$\lambda_5$&$\lambda_6$&$\lambda_7$&$\lambda_8$\\
0&0.150209&   0.358082&   1.000000&   1.000000 &  1.154010&   1.276086&   2.000000 &  2.176030\\
1&0.149788&   0.358074&   1.000000&   1.000000 &  1.153998&   1.274741&   2.000000 &  2.172873\\
2&0.149621&   0.358072&   1.000000&   1.000000 &  1.153996&   1.274203&   2.000000 &  2.171612\\
         		\hline
	\end{tabular}}
    \end{adjustwidth}
\end{table}

\begin{table}[!ht]
	\centering
	\caption{Numerical eigenvalues   with units $\pi^2$ on $\Omega_3$ obtained by {\bf Scheme \ref{algo7}} for \eqref{prob-eig}} \label{tab-primal-omega3}
			    \begin{adjustwidth}{0cm}{0cm}
  {\begin{tabular}{ccccccccccc}
		\hline
		$n$&$\lambda_1$&$\lambda_2$&$\lambda_3$&$\lambda_4$&$\lambda_5$&$\lambda_6$&$\lambda_7$&$\lambda_8$\\
0& 0.416285& 0.665296& 1.000000& 1.000000& 1.181067&  1.558700& 2.000000& 2.447851\\
1&  0.393230& 0.635841& 1.000000& 1.000000& 1.170019& 1.536785& 2.000000& 2.416211\\
2& 0.379644& 0.618841& 1.000000& 1.000000& 1.163726& 1.524513& 2.000000& 2.396893\\
   		\hline
	\end{tabular}}
    \end{adjustwidth}
\end{table}  

\begin{table}[!ht]
	\centering
	\caption{Numerical eigenvalues  with units $\pi^2$ on $\Omega_3$ obtained by {\bf Scheme \ref{algo8}} for \eqref{prob-eig}} \label{tab-argyris-omega3}
			    \begin{adjustwidth}{0cm}{0cm}
  {\begin{tabular}{ccccccccc}
		\hline
		$n$&$\lambda_1$&$\lambda_2$&$\lambda_3$&$\lambda_4$&$\lambda_5$&$\lambda_6$&$\lambda_7$&$\lambda_8$\\
0&0.431506&   0.667232&   1.000000&   1.000000&   1.185750&   1.559335&   2.000000&   2.471656\\
1&0.401863&   0.638780 &  1.000000&   1.000000&   1.173429&   1.538747&   2.000000&   2.429102\\
2&0.384768&   0.621402&   1.000000&   1.000000&   1.165924&   1.526300&   2.000000&   2.404445\\
   		\hline
	\end{tabular}}
    \end{adjustwidth}
\end{table}

 \begin{table}[!ht]
	\centering
	\caption{Numerical eigenvalues with units $\pi^2$ on $\Omega_1$ obtained by {\bf Scheme \ref{algo5}} with $k=4$ for \eqref{prob2}} \label{tab5}
\begin{adjustwidth}{0cm}{0cm}
  {\begin{tabular}{ccccccccc}
		\hline
		$n$&$\lambda_1$&$\lambda_2$&$\lambda_3$&$\lambda_4$&$\lambda_5$&$\lambda_6$&$\lambda_7$&$\lambda_8$\\
  $0$&0.000000  &  1.000000 &  1.000000 &  2.000000 &  4.000000 & 4.000001&5.000002&5.000002\\
  $1$&0.000000  &  1.000000 &  1.000000 &  2.000000 &  4.000000 & 4.000000&5.000000&5.000000\\
  $2$&-0.000000  &  1.000000 &  1.000000 &  2.000000 &  4.000000 & 4.000000&5.000000&5.000000\\
 		\hline
	\end{tabular}}
    \end{adjustwidth}
\end{table}

\begin{table}[!hb]
	\centering
	\caption{Numerical eigenvalues with units $\pi^2$ on $\Omega_1$ obtained by {\bf Scheme \ref{algo6}} for \eqref{prob2}} \label{tab5}
\begin{adjustwidth}{0cm}{0cm}
  {\begin{tabular}{ccccccccc}
		\hline
		$n$&$\lambda_1$&$\lambda_2$&$\lambda_3$&$\lambda_4$&$\lambda_5$&$\lambda_6$&$\lambda_7$&$\lambda_8$\\
	  $0$&0.000000  &  1.000000 &  1.000000 &  2.000000 &  4.000000 & 4.000000&5.0000001&5.0000001\\
  $1$&0.000000  &  1.000000 &  1.000000 &  2.000000 &  4.000000 & 4.000000&5.0000000&5.0000000\\
  $2$&-0.000000  &  1.000000 &  1.000000 &  2.000000 &  4.000000 & 4.000000&5.0000000&5.0000000\\

				\hline
	\end{tabular}}
    \end{adjustwidth}
\end{table}


\begin{table}[!hb]
	\centering
	\caption{Numerical eigenvalues with units $\pi^2$ on $\Omega_1$ obtained by {\bf Scheme \ref{algo7}} for \eqref{prob2}} \label{tab5}
\begin{adjustwidth}{0cm}{0cm}
  {\begin{tabular}{ccccccccc}
		\hline
		$n$&$\lambda_1$&$\lambda_2$&$\lambda_3$&$\lambda_4$&$\lambda_5$&$\lambda_6$&$\lambda_7$&$\lambda_8$\\
   0&1.000000&   1.000000&    2.000000&     4.000002 &   4.000002 &   5.000004  &  5.000004   & 7.762197 \\
   1&1.000000&   1.000000&    2.000000&     4.000000 &   4.000000 &   5.000000  &  5.000000   & 5.940076 \\
   2&1.000000&   1.000000&    2.000000&     4.000000 &   4.000000  &  4.833375   & 5.000000   & 5.000000 \\
   3&1.000000&   1.000000&    2.000000&     4.000000 &   4.000000 &   4.078384  &  5.000000   & 5.000000 \\				\hline
	\end{tabular}}
    \end{adjustwidth}
\end{table}

\begin{table}[!hb]
	\centering
	\caption{Numerical eigenvalues with units $\pi^2$ on $\Omega_1$ obtained by {\bf  Scheme \ref{algo8}} for \eqref{prob2}} \label{tab5}
				    \begin{adjustwidth}{0cm}{0cm}
  {\begin{tabular}{ccccccccc}
		\hline
		$n$&$\lambda_1$&$\lambda_2$&$\lambda_3$&$\lambda_4$&$\lambda_5$&$\lambda_6$&$\lambda_7$&$\lambda_8$\\
0&1.000000&   1.000000&   2.000000&   4.000000&   4.000000&   4.353529& 5.000000& 5.000000\\
1&1.000000&   1.000000&   2.000000&   3.732480&   4.000000&   4.000000& 5.000000& 5.000000\\
2&1.000000&   1.000000&   2.000000&   3.266372&   4.000000&   4.000000& 5.000000& 5.000000\\
3&1.000000&   1.000000&   2.000000&   2.903702&   4.000000&   4.000000& 5.000000& 5.000000\\
		\hline
	\end{tabular}}
    \end{adjustwidth}
\end{table}

 \begin{table}[!hb]
	\centering
	\caption{Numerical eigenvalues with units $\pi^2$ on $\Omega_2$ obtained by {\bf Scheme \ref{algo5}} with $k=4$ for \eqref{prob2}} \label{tab5}
				    \begin{adjustwidth}{0cm}{0cm}
  {\begin{tabular}{ccccccccc}
		\hline
		$n$&$\lambda_1$&$\lambda_2$&$\lambda_3$&$\lambda_4$&$\lambda_5$&$\lambda_6$&$\lambda_7$&$\lambda_8$\\
  $0$&0.000000  &  -0.000000 &  0.594212 &  0.595733 &  1.802009 & 2.843750&4.460286&4.495899\\
  $1$&0.000000&  -0.000000&  0.593616 &  0.595336 &  1.801970 &2.839489&4.458673&4.493248\\
  $2$&0.000000&  -0.000000 &  0.593379 & 0.595179 &  1.801960 & 2.837797&4.458049&4.492195\\
		\hline
	\end{tabular}}
    \end{adjustwidth}
\end{table}

\begin{table}[!hb]
	\centering
	\caption{Numerical eigenvalues with units $\pi^2$ on $\Omega_2$ obtained by {\bf Scheme \ref{algo6}} for \eqref{prob2}} \label{tab5}
				    \begin{adjustwidth}{0cm}{0cm}
  {\begin{tabular}{ccccccccc}
		\hline
		$n$&$\lambda_1$&$\lambda_2$&$\lambda_3$&$\lambda_4$&$\lambda_5$&$\lambda_6$&$\lambda_7$&$\lambda_8$\\
	    0&0.000000&-0.000000&0.596944&0.597564&1.802182&2.863546&4.467690&4.508052\\
		1&0.000000 &-0.000000&0.594698&0.596060&1.802016&2.847329&4.461542&4.498048\\
		2&0.000000&-0.000000&0.593808&0.595465&1.801975&2.840909&4.459231&4.494105\\
				\hline
	\end{tabular}}
    \end{adjustwidth}
\end{table}

\begin{table}[!hb]
	\centering
	\caption{Numerical eigenvalues with units $\pi^2$ on $\Omega_2$ obtained by {\bf  Scheme \ref{algo7}} for \eqref{prob2}} \label{tab5}
				    \begin{adjustwidth}{0cm}{0cm}
  {\begin{tabular}{ccccccccc}
		\hline
		$n$&$\lambda_1$&$\lambda_2$&$\lambda_3$&$\lambda_4$&$\lambda_5$&$\lambda_6$&$\lambda_7$&$\lambda_8$\\
	    0& 2.640174 &   3.189826 &   3.594658 &   4.335878 &   6.144269 &   7.950591 &   8.156921 &   8.201252 \\
  1& 2.267177 &   2.860417 &   3.128707 &   4.010749 &   5.845370 &   7.650719 &   7.875648 &   7.961485 \\
  2& 2.063909 &   2.673185 &   2.869503 &   3.813822 &   5.657945 &   7.452232 &   7.668946 &   7.852054 \\
  3& 1.946566 &   2.562122 &   2.718412 &   3.687886 &   5.541129 &   7.326460 &   7.539070 &   7.783510 \\
				\hline
	\end{tabular}}
    \end{adjustwidth}
\end{table}

\begin{table}[!hb]
	\centering
	\caption{Numerical eigenvalues with units $\pi^2$ on $\Omega_2$ obtained by {\bf  Scheme \ref{algo8}} for \eqref{prob2}} \label{tab5}
				    \begin{adjustwidth}{0cm}{0cm}
  {\begin{tabular}{ccccccccc}
		\hline
		$n$&$\lambda_1$&$\lambda_2$&$\lambda_3$&$\lambda_4$&$\lambda_5$&$\lambda_6$&$\lambda_7$&$\lambda_8$\\
  0&  2.987390&   3.489137 &   3.927939 &   4.570472 &   6.438141 &   8.140561 &   8.420141 &   8.469386 \\
 1&  2.457624&   3.037089 &   3.331364 &   4.148755 &   6.008541 &   7.823479 &   8.045921 &   8.078550 \\
  2& 2.175539&   2.777434 &   2.992092 &   3.886997 &   5.758800 &   7.562974 &   7.770874 &   7.918637 \\
  3& 2.013865&   2.623800 &   2.792272 &   3.720863 &   5.604514 &   7.391764 &   7.599227 &   7.823772 \\
		\hline
	\end{tabular}}
    \end{adjustwidth}
\end{table}


 \begin{table}[!hb]
	\centering
	\caption{Numerical eigenvalues with units $\pi^2$ on $\Omega_3$ obtained by {\bf Scheme \ref{algo5}} with $k=4$ for \eqref{prob2}} \label{tab5}
				    \begin{adjustwidth}{0cm}{0cm}
  {\begin{tabular}{ccccccccc}
		\hline
		$n$&$\lambda_1$&$\lambda_2$&$\lambda_3$&$\lambda_4$&$\lambda_5$&$\lambda_6$&$\lambda_7$&$\lambda_8$\\
0&   0.000000&   0.149678 &   0.358073 &   1.000000&   1.000000  & 1.153996 &   1.274383 &   2.000000\\
1&   0.000000&   0.149578 &   0.358072 &   1.000000&   1.000000  & 1.153996 &   1.274062 &   2.000000\\
2&   0.000000&   0.149538 &   0.358072 &   1.000000&   1.000000  & 1.153996 &   1.273934 &   2.000000\\
		\hline
	\end{tabular}}
    \end{adjustwidth}
\end{table}

\begin{table}[!hb]
	\centering
	\caption{Numerical eigenvalues with units $\pi^2$ on $\Omega_3$ obtained by {\bf Scheme \ref{algo6}} for \eqref{prob2}} \label{tab5}
				    \begin{adjustwidth}{0cm}{0cm}
  {\begin{tabular}{ccccccccc}
		\hline
		$n$&$\lambda_1$&$\lambda_2$&$\lambda_3$&$\lambda_4$&$\lambda_5$&$\lambda_6$&$\lambda_7$&$\lambda_8$\\
	 0&   0.000000 &   0.150209  &   0.358082  &   1.000000 &   1.000000 &  1.154010 &   1.276086 &   2.000000\\
1&   0.000000 &   0.149788  &   0.358074  &   1.000000 &   1.000000  & 1.153998 &   1.274741 &   2.000000\\
2 & -0.000000 &   0.149621  &   0.358072  &   1.000000 &   1.000000 &  1.153996 &   1.274203 &   2.000000\\
				\hline
	\end{tabular}}
    \end{adjustwidth}
\end{table}

\begin{table}[!hb]
	\centering
	\caption{Numerical eigenvalues with units $\pi^2$ on $\Omega_3$ obtained by {\bf Scheme \ref{algo7}} for \eqref{prob2}} \label{tab5}
				    \begin{adjustwidth}{0cm}{0cm}
  {\begin{tabular}{ccccccccc}
		\hline
		$n$&$\lambda_1$&$\lambda_2$&$\lambda_3$&$\lambda_4$&$\lambda_5$&$\lambda_6$&$\lambda_7$&$\lambda_8$\\
	   0& 0.415525 &   0.607103 &  1.000000 &   1.000000 &   1.180641 &   1.471030 &   2.000000 &   2.203868 \\
   1&0.392873 &   0.556247 &  1.000000 &   1.000000 &   1.169824 &   1.399646 &   1.989572 &   2.000000 \\
   2&0.379489 &   0.518223 &  1.000000 &   1.000000 &   1.163642 &   1.330472 &   1.856537 &   2.000000 \\
   3&0.371374 &   0.487486 &  1.000000 &   1.000000 &   1.159955 &   1.265016 &   1.776840 &   2.000000 \\				\hline
	\end{tabular}}
    \end{adjustwidth}
\end{table}

\begin{table}[!ht]
	\centering
	\caption{Numerical eigenvalues with units $\pi^2$ on $\Omega_3$ obtained by {\bf Scheme \ref{algo8}} for \eqref{prob2}} \label{tab5}
				    \begin{adjustwidth}{0cm}{0cm}
  {\begin{tabular}{ccccccccc}
		\hline
		$n$&$\lambda_1$&$\lambda_2$&$\lambda_3$&$\lambda_4$&$\lambda_5$&$\lambda_6$&$\lambda_7$&$\lambda_8$\\
   0&0.429946 &   0.569639 &   1.000000 &   1.000000 &   1.184886 &   1.369446 &   1.913935 &   2.000000 \\
   1&0.401203 &   0.520021 &   1.000000 &   1.000000 &   1.173065 &   1.292529 &   1.813290 &   2.000000 \\
   2&0.384498 &   0.482939 &   1.000000 &   1.000000 &   1.165775 &   1.227369 &   1.751382 &   2.000000 \\
   3&0.374438 &   0.453274 &   1.000000 &   1.000000 &   1.161302 &   1.172738 &   1.711802 &   2.000000 \\   
		\hline
	\end{tabular}}
    \end{adjustwidth}
\end{table}

\section{$\grad\rot$ complex and the Hodge-Laplacian problems}
To investigate the spurious solutions, in this section, we present the $\grad\rot$ complex in \cite{arnold2020complexes} and its connections to the problems considered  in Section \ref{experiments}. 
\subsection{$\grad\rot$ complex and cohomology}
For any real number $q$, the {\it $\grad\rot$ complex} in \cite{arnold2020complexes} reads:
\begin{equation}\label{grad-curl2D}
\begin{tikzcd}[column sep=1.5em]
0 \arrow{r}{}&H^{q}(\Omega)\arrow{r}{\grad} & H^{q-1}(\Omega)\otimes \mathbb V  \arrow{r}{\grad\rot} &[12] H^{q-3}(\Omega)\otimes \mathbb V  \arrow{r}{\rot} & H^{q-4}(\Omega) \arrow{r}{} & 0.
\end{tikzcd}
\end{equation}
The sequence \eqref{grad-curl2D} is derived by connecting two de~Rham complexes, more precisely, from the following diagram:
\begin{equation}\label{Z-complex-2D}
\begin{tikzcd}[column sep=1.6em]
0 \arrow{r} &H^{q}(\Omega)\arrow{r}{\grad} &H^{q-1}(\Omega)\otimes \mathbb V  \arrow{r}{\rot}&H^{q-2}(\Omega)  \arrow{r}{} & 0 \arrow{r}{} & 0\\
0 \arrow{r}&0 \arrow{r} \arrow[ur, "0"] &H^{q-2}(\Omega)\arrow{r}{\grad} \arrow[ur, "\mathrm{id}"]& H^{q-3}(\Omega) \otimes \mathbb V  \arrow{r}{\rot} \arrow[ur, "0"]&H^{q-4}(\Omega) \arrow{r}{}\arrow[ur, "0"]  & 0.
 \end{tikzcd}
\end{equation}
A major conclusion of the construction in \cite{arnold2020complexes} is that the cohomology of \eqref{grad-curl2D} is isomorphic to the cohomology of the rows of \eqref{Z-complex-2D}. Note that the dimension of cohomology at $H^{q-1}(\Omega)\otimes \mathbb V$ in the first row of \eqref{Z-complex-2D} is the first Betti number $\beta_{1}$ of the domain, and the dimension of cohomology at $H^{q-2}(\Omega)$ in the second row of \eqref{Z-complex-2D} is 1 (the kernel of $\grad$ consists of constants). Consequently,
$$
\ker (\grad\rot, H^{q-1}(\Omega)\otimes \mathbb V)= \ran (\grad, H^{q}(\Omega))\oplus\mathscr{H}_{\infty}, 
$$
where $\mathscr{H}_{\infty}$ is a finite dimensional space of smooth functions with dimension $\dim \mathscr{H}_{\infty}=1+ \beta_{1}$. 
\begin{remark}
A general algebraic construction from \eqref{Z-complex-2D} implies $\dim \mathscr{H}_{\infty}\leq 1+ \beta_1$ \cite[Theorem 6]{arnold2020complexes}. From the identity $\rot(u\bm x^{\perp})=\bm x\cdot \grad u+2u$, we see that $I=\rot \widetilde{K}_{1}-\widetilde{K}_{2}\grad$, where $\widetilde{K}_{1}u:=1/2u\bm x^{\perp}$ and $\widetilde{K}_{2}\bm v:=1/2\bm v\cdot \bm x$. Therefore one can verify the condition \cite[(28)]{arnold2020complexes} by defining $K_1,K_2$ in the way of \cite[(43)]{arnold2020complexes}. Then $\dim \mathscr{H}_{\infty}= 1+ \beta_1$ follows.
\end{remark}

The $L^{2}$ version of \eqref{grad-curl2D} with unbounded linear operators, i.e., 
\begin{equation}\label{L2-complex}
\begin{tikzcd}[column sep=1.5em]
0 \arrow{r}{}&L^{2}(\Omega)\arrow{r}{\grad} & L^{2}(\Omega)\otimes \mathbb V  \arrow{r}{\grad\rot} &[12] L^{2}(\Omega)\otimes \mathbb V  \arrow{r}{\rot} & L^{2}(\Omega) \arrow{r}{} & 0,
\end{tikzcd}
\end{equation}
is closely related to the PDEs and the numerics. Consider the following domain complex of \eqref{L2-complex} (c.f., \cite{arnold2020complexes})  
\begin{equation}\label{grad-curl-domain2D}
\begin{tikzcd}[column sep=1.5em]
0 \arrow{r}{}&H^{1}(\Omega)\arrow{r}{\grad} & H(\grad\rot;\Omega)  \arrow{r}{\grad\rot} &[12] H(\rot;\Omega) \arrow{r}{\rot} & L^{2}(\Omega) \arrow{r}{} & 0.
\end{tikzcd}
\end{equation}
Hereafter, for a differential operator $D$,
\[H(D;\Omega):=\{u\in L^{2}(\Omega) : Du\in L^{2}(\Omega)\}\] with the inner product 
\[(\bm u,\bm v)_{H(D; \Omega)}=(\bm u,\bm v)+(D\bm u,D\bm v)\]
and the norm \[\|\bm u\|^{2}_{H(D; \Omega)}=\|\bm u\|^{2}+\|D\bm u\|^{2}.\]

The domain complex of the adjoint of \eqref{grad-curl-domain2D} is
\begin{equation}
\begin{tikzcd}[column sep=1.5em]
0 & L_{0}^2(\Omega)\arrow{l}{} & H_0(\div;\Omega)  \arrow[l,"-\div"'] &[12]  H_{0}(\curl\div;\Omega) \arrow[l,"-\curl\div"'] & H_0^1(\Omega) \arrow[l,"\curl"'] &0\arrow{l}{}.
\end{tikzcd}
\end{equation}
Here $H_0(\div;\Omega)$ is the space of functions in $H(\div;\Omega)$ with vanishing trace and at this stage $H_{0}(\curl\div;\Omega)$ is a formal notation for the domain of the adjoint of the operator $(\grad\rot, H(\grad\rot;\Omega))$. We will characterize this space in Section \ref{characterization} to show that it is a subspace of $H(\curl\div;\Omega)$ with boundary conditions $\bm u\cdot\bm n=0$ and $\div\bm u=0$.

  A corollary of \cite[Theorem 1]{arnold2020complexes} is the following.
\begin{theorem}
The following decomposition holds
\begin{align}\label{decomp-gradcurl}
\ker (\grad\rot, H(\grad\rot;\Omega) )= \ran (\grad, H^{1}(\Omega))\oplus \mathscr{H}_{\infty},
\end{align}
with $\dim \mathscr{H}_{\infty}=1+ \beta_{1}$, where $\beta_{1}$ is the first Betti number of the domain.
\end{theorem}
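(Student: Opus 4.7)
The plan is to obtain this decomposition as a direct corollary of the abstract BGG-type cohomology theorem of Arnold--Hu, applied to the derived $\grad\rot$ domain complex \eqref{grad-curl-domain2D}, and then to evaluate the two de~Rham cohomology groups that appear in that formula on a bounded Lipschitz 2D domain.

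First, I would view \eqref{grad-curl-domain2D} as the BGG-derived complex obtained from two copies of the $L^2$ de~Rham domain complex
\begin{equation*}
0\to H^1(\Omega)\xrightarrow{\grad} H(\rot;\Omega)\xrightarrow{\rot} L^2(\Omega)\to 0,
\end{equation*}
linked via the identity map in the middle slot, i.e., the domain-complex analogue of \eqref{Z-complex-2D}. Then I would invoke \cite[Theorem 1]{arnold2020complexes}. Its hypothesis (condition~(28) of \cite{arnold2020complexes}) has already been verified in the remark immediately preceding the statement, via the explicit Poincar\'e operators $\widetilde K_1 u=\tfrac12 u\bm x^\perp$ and $\widetilde K_2\bm v=\tfrac12\bm v\cdot\bm x$ together with the identity $\rot(u\bm x^\perp)=\bm x\cdot\grad u+2u$. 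The conclusion is a canonical isomorphism between the cohomology of the derived complex and the direct sum of the cohomologies of the two de~Rham rows.

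At the middle position $H(\grad\rot;\Omega)$, this identifies $\ker(\grad\rot)/\ran(\grad)$ with the direct sum of (i) the space of $L^2$ harmonic $1$-forms on $\Omega$, arising from the top de~Rham row, which on a bounded Lipschitz domain has dimension $\beta_1$, and (ii) $\ker(\grad, H^1(\Omega))=\mathbb R$, arising from the bottom row and accounting for the constants. Hence any algebraic complement $\mathscr H_\infty$ of $\ran(\grad,H^1(\Omega))$ inside $\ker(\grad\rot,H(\grad\rot;\Omega))$ has dimension $1+\beta_1$, and it can be chosen in $C^\infty(\Omega)$ because harmonic $L^2$ $1$-forms are smooth by elliptic regularity, constants are smooth, and the explicit BGG splitting formula based on $\widetilde K_1,\widetilde K_2$ preserves smoothness.

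The main obstacle is essentially bookkeeping at the $L^2$-domain level: one must verify that $\grad$, $\rot$ and $\grad\rot$ all have closed range on the appropriate domains with their intrinsic boundary conditions, and that $\widetilde K_1,\widetilde K_2$ are bounded between the right function spaces, so that \cite[Theorem 1]{arnold2020complexes}---originally phrased at the Sobolev scale for \eqref{grad-curl2D}---applies verbatim to the $L^2$-domain complex \eqref{grad-curl-domain2D}. These closed-range statements follow from standard Poincar\'e/Rellich inequalities on bounded Lipschitz 2D domains.
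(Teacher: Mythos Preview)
Your proposal is correct and follows essentially the same route as the paper: the theorem is stated there as a direct corollary of \cite[Theorem~1]{arnold2020complexes}, applied to the domain complex \eqref{grad-curl-domain2D} derived from the two de~Rham rows in \eqref{Z-complex-2D}, with the hypothesis verified via the Poincar\'e operators $\widetilde K_1,\widetilde K_2$ as in the remark preceding the theorem. The identification of the two cohomology contributions as $\beta_1$ (harmonic $1$-forms from the top row) and $1$ (constants from the bottom row) is exactly the paper's argument.
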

From general results on Hilbert complexes, we have the Hodge decomposition
$$
L^{2}(\Omega)\otimes \mathbb V=\grad H^{1}(\Omega)\oplus \curl\div H_{0}(\curl\div;\Omega) \oplus \mathfrak H,
$$
where $\mathfrak H$ is the space of harmonic forms with the same dimension as $\mathscr{H}_{\infty}$. In addition to the harmonic forms of the de Rham complex, i.e. the functions satisfying $\rot\bm u=0$ and $\div\bm u=0$, the function $\bm u=\grad p$ with $p$ solving 
\[\Delta p=1 \text{ in } \Omega, \quad p=0\text{ on } \partial\Omega\]
is also a harmonic form in $\mathfrak H$. 
 
The Hodge-Laplacian operator follows from the abstract definition:
$$
\mathscr{L}:=-(\curl \div)\grad\rot-\grad\div=-\curl \Delta \rot-\grad\div,
$$
with the domain $D_{\mathscr{L}}=\{\bm u\in  X: \grad\rot \bm u\in  H_{0}(\curl\div;\Omega),\div \bm u\in H^1(\Omega) \}$.
 For $\bm f\in L^2(\Omega)\otimes  \mathbb V$, the strong formulation of the Hodge-Laplacian boundary value problem seeks $\bm u\in  D_{\mathscr{L}}$ and $\bm u\perp \mathfrak H$ such that
\begin{equation}\label{prob-source-withoutbc}
\begin{split}
-\curl \Delta \rot\bm u-\grad\div\bm u &=\bm f-P_{\mathfrak H}\bm f\quad \text{in}\;\Omega.\\
\end{split}
\end{equation}
Here and after, $P_{\mathfrak H}$ is the $L^2$ projection onto $\mathfrak H$.
The corresponding eigenvalue problem is to seek $(\lambda,\bm u)\in  \mathbb R\times D_{\mathscr{L}}$ such that
\begin{equation}\label{prob-source-withoutbc}
\begin{split}
-\curl \Delta \rot\bm u-\grad\div\bm u &=\lambda\bm u\quad \text{in}\;\Omega,
\end{split}
\end{equation}
which is exactly the eigenvalue problem \eqref{prob2}.

\subsection{Hodge-Laplacian boundary value problems}
Consider another domain complex of \eqref{L2-complex} with boundary conditions:
\begin{equation}\label{grad-curl-domain2D-bdc}
\begin{tikzcd}[column sep=1.5em]
0 \arrow{r}{}&H^{1}(\Omega)\arrow{r}{\grad} &  H_{\rot}(\grad\rot;\Omega)  \arrow{r}{\grad\rot} &[12] H_0(\rot;\Omega) \arrow{r}{\rot} & L^{2}_{0}(\Omega) \arrow{r}{} & 0,
\end{tikzcd}
\end{equation}
which is derived from the following diagram:
\begin{equation}\label{bgg-bc1}
\begin{tikzcd}
0 \arrow{r} &H^{1}(\Omega)\arrow{r}{\grad} &H_{\rot}(\grad\rot;\Omega)  \arrow{r}{\rot}&H^{1}_{0}(\Omega) \arrow{r}{} & 0 \arrow{r}{} & 0\\
0 \arrow{r}&0 \arrow{r} \arrow[ur, "0"] &H^{1}_{0}(\Omega)\arrow{r}{\grad} \arrow[ur, "\mathrm{id}"]& H_{0}(\rot;\Omega)  \arrow{r}{\rot} \arrow[ur, "0"]&L^{2}_{0}(\Omega) \arrow{r}{}\arrow[ur, "0"]  & 0.
 \end{tikzcd}
\end{equation}

By the algebraic construction in \cite{arnold2020complexes}, 
 we can show the cohomology of \eqref{grad-curl-domain2D-bdc} in a similar way as \eqref{grad-curl-domain2D}. In particular, we have 
$$
\ker (\grad\rot, H_{\rot}(\grad\rot;\Omega))=\ran (\grad, H^{1}(\Omega))\oplus \mathscr{H}_{\infty}^{\rot},
$$
where $\mathscr{H}_{\infty}^{\rot}$ is a set of smooth functions with dimension equal to the dimension of the cohomology of the first row of \eqref{bgg-bc1} at $H_{\rot}(\grad\rot;\Omega)$ plus  the dimension of the cohomology of the second row at $H_0^1(\Omega)$, and hence $\dim\mathscr{H}_{\infty}^{\rot}=\beta_1$, which means that there is no nontrivial cohomology on contractible domains.

The domain complex of the adjoint of \eqref{grad-curl-domain2D-bdc} is:
\begin{equation}
\begin{tikzcd}[column sep=1.5em]
0 & L_{0}^2(\Omega)\arrow{l}{} & H_0(\div;\Omega)  \arrow[l,"-\div"'] &[12]  H_{\div}(\curl\div;\Omega) \arrow[l,"-\curl\div"'] & H^1(\Omega) \arrow[l,"\curl"'] &0\arrow{l}{} .
\end{tikzcd}
\end{equation}
Here 
$
 H_{\div}(\curl\div;\Omega)
$
is again a formal notation for the domain of the adjoint of the unbounded operator $(\grad\rot, H_{\rot}(\grad\rot;\Omega))$. We will characterize this space in Section \ref{characterization} to show that it is a subspace $H(\curl\div;\Omega)$ with boundary condition $\div\bm u=0$. 

The Hodge decomposition at $H_{\rot}(\grad\rot)$ now reads:
\begin{equation}\label{Hodge}
L^{2}(\Omega)\otimes \mathbb V=\grad H^{1}(\Omega)\oplus \curl\div  H_{\div}(\curl\div;\Omega)\oplus\mathfrak H_{\rot},
\end{equation}
where the space of harmonic forms $\mathfrak H_{\rot}$ has the same dimension as $\mathscr{H}_{\infty}^{\rot}$, which is trivial if $\Omega$ is contractible.

The Hodge-Laplacian operator follows from the abstract definition:
$$
\mathscr{L}:=-(\curl \div)\grad\rot-\grad\div=-\curl \Delta \rot-\grad\div,
$$
with the domain $D_{\mathscr{L},\rot}=\{\bm u\in  X_{\rot} : \grad\rot \bm u\in  H_{\div}(\curl\div;\Omega),\div \bm u\in H^1(\Omega) \}$.
 For $\bm f\in L^2(\Omega)\otimes  \mathbb V$, the strong formulation of the Hodge-Laplacian boundary value problem seeks $\bm u\in  D_{\mathscr{L},\rot}$ and $\bm u\perp \mathfrak H_{\rot}$ such that
\begin{equation}\label{prob-source}
\begin{split}
-\curl \Delta \rot\bm u-\grad\div\bm u &=\bm f-P_{\mathfrak H_{\rot}}\bm f\quad \text{in}\;\Omega,\\
\end{split}
\end{equation}
The  primal variational formulation is to
seek $\bm u\in X_{\rot}$ and $\bm u\perp \mathfrak H_{\rot}$ such that
\begin{equation}\label{pvprob}
\begin{split}
(\grad\rot\bm u,\grad\rot\bm v)+(\div\bm u,\div\bm v)&=(\bm f-P_{\mathfrak H_{\rot}}\bm f,\bm v), \quad \forall \bm v\in X_{\rot}.
\end{split}
\end{equation}
The mixed variational formulation
seeks $(\bm u,\sigma)\in  H_{\rot}(\grad\rot;\Omega)\times H^1(\Omega)$, such that $\bm u\perp \mathfrak H_{\rot}$ and
\begin{equation}\label{mvprob}
\begin{split}
(\grad\rot\bm u,\grad\rot\bm v)+(\grad\sigma,\bm v)&= (\bm f-P_{\mathfrak H_{\rot}}\bm f,\bm v), \quad \forall \bm v\in  H_{\rot}(\grad\rot;\Omega),\\
(\bm u,\grad \tau)-(\sigma,\tau)&=0,\quad \forall \tau\in H^1(\Omega).
\end{split}
\end{equation}
\begin{remark}
When $\Omega$ is simply-connected, $\mathfrak H_{\rot}$ vanishes and hence the Hodge-Laplacian problem is exactly the problem \eqref{prob-source1}.
\end{remark}

According to Theorem 4.7 in \cite{arnold2018finite}, the strong formulation \eqref{prob-source}, the primal formulation \eqref{pvprob}, and the mixed formulation \eqref{mvprob} are equivalent. 
The well-posedness of  \eqref{prob-source} follows from standard results on the Hodge-Laplacian problems of Hilbert complexes (c.f., \cite[Theorem 4.8]{arnold2018finite}), and the following estimate holds
\begin{align}\label{wellposedness-withoutbc}
	\|\bm u\|+\|\grad\rot\bm u\|+\|\div\bm u\|_1+\|\curl\Delta\rot\bm u\|\leq C\|\bm f\|.
\end{align}
Since $\bm u\in D_{\mathscr L,\rot}$, $\Delta\rot\bm u=0$ on $\partial\Omega$. By the Poincar\'e inequality we have
\begin{align}\label{Deltarotu}
\|\Delta\rot\bm u\|\leq C\|\curl\Delta\rot\bm u\|\leq C\|\bm f\|.
\end{align}

Next we investigate the regularity of the solutions.
\begin{theorem}\label{regularity-withoutbc}
In addition to the assumptions on $\Omega$, we further assume that $\Omega$ is a  polygon. There exists a constant $\alpha>1/2$ such that the solution $\bm u$ of \eqref{prob-source} satisfies
	\[\bm u\in  H^{\alpha}(\Omega)\otimes \mathbb V\text{ and } \rot\bm u\in H^{1+\alpha}(\Omega),\] and it holds 
	\[\|\bm u\|_{{\alpha}}+\|\rot\bm u\|_{{1+\alpha}}\leq C\|\bm f\|.\]
	Moreover, if $\bm f\in H(\div;\Omega)$ and $\bm f\cdot\bm n\in H^{\alpha-1/2}(\partial\Omega)$, then $\div\bm u\in H^{1+\alpha}(\Omega)$ and it holds
	\begin{align*}
 \|\div\bm u\|_{1+\alpha}\leq C\left( \|\mathrm{div} \bm f\|+\|\bm f\cdot\bm n\|_{\alpha-1/2,\partial\Omega}\right).
\end{align*}
	\end{theorem}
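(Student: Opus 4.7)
The plan is to reduce the Hodge--Laplacian system \eqref{prob-source} to scalar elliptic problems for the auxiliary quantities $w:=\rot\bm u$ and $\sigma:=-\div\bm u$, and then recover $\bm u$ itself through a Costabel--Dauge embedding on a polygon. A single exponent $\alpha>1/2$ depending only on the largest interior angle $\omega_{\max}$ of $\Omega$ will be chosen so that three polygonal regularity statements hold simultaneously: the shift theorem for the Dirichlet Laplacian, the shift theorem for the Neumann Laplacian, and the embedding $H(\rot;\Omega)\cap H_0(\div;\Omega)\hookrightarrow H^{\alpha}(\Omega)\otimes\mathbb V$. Each admits any $\alpha<\pi/\omega_{\max}$, so a common choice exists.

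To analyze $w$, I would apply $\rot$ to \eqref{prob-source}. Using the 2D identities $\rot\curl=-\Delta$, $\rot\grad=0$, together with the fact that harmonic forms in $\mathfrak H_{\rot}$ satisfy $\rot=0$ (they lie in $\ker(\grad\rot)$, so $\rot$ is constant, forced to zero by the boundary condition built into $H_{\rot}(\grad\rot;\Omega)$), I obtain
\[\Delta^{2}w=\rot\bm f \quad\text{in}\ \Omega,\qquad w=\Delta w=0 \quad\text{on}\ \partial\Omega.\]
Splitting this Navier biharmonic problem via $\zeta:=-\Delta w$ into the pair $-\Delta\zeta=\rot\bm f$ with $\zeta|_{\partial\Omega}=0$ followed by $-\Delta w=\zeta$ with $w|_{\partial\Omega}=0$, standard variational theory (using $\|\rot\bm f\|_{-1}\le C\|\bm f\|$) gives $\|\zeta\|_1\le C\|\bm f\|$, and Grisvard's shift theorem for the Dirichlet Laplacian applied to the second problem produces $w\in H^{1+\alpha}(\Omega)$ with $\|w\|_{1+\alpha}\le C\|\zeta\|\le C\|\bm f\|$. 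Combining the Costabel--Dauge embedding with the a priori estimate \eqref{wellposedness-withoutbc} then yields
\[\|\bm u\|_{\alpha}\le C\bigl(\|\bm u\|+\|\rot\bm u\|+\|\div\bm u\|\bigr)\le C\|\bm f\|,\]
which together with the bound on $w$ gives the first conclusion of the theorem.

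For the additional regularity of $\div\bm u$ under the stronger hypotheses on $\bm f$, I would apply $\div$ to \eqref{prob-source} and use $\div\curl=0$ with $\div P_{\mathfrak H_{\rot}}\bm f=0$ to obtain $\Delta\sigma=\div\bm f$ in $\Omega$. The Neumann condition is derived by taking the normal trace of \eqref{prob-source}: the 2D identity $\curl\phi\cdot\bm n=\partial_{\tau}\phi$ combined with $\Delta w|_{\partial\Omega}=0$ (so that $\partial_{\tau}\Delta w=0$) eliminates the curl term and yields $\partial_{\bm n}\sigma=\bm f\cdot\bm n$ on $\partial\Omega$. The shift theorem for the Neumann Laplacian on a polygon then delivers $\sigma\in H^{1+\alpha}(\Omega)$ with $\|\sigma\|_{1+\alpha}\le C\bigl(\|\div\bm f\|+\|\bm f\cdot\bm n\|_{\alpha-1/2,\partial\Omega}\bigr)$, which is the desired bound since $\sigma=-\div\bm u$.

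The main obstacle I anticipate is the careful alignment of the single exponent $\alpha>1/2$ across the three polygonal regularity results, all of which are constrained by re-entrant corners, together with verifying that the Neumann data $\bm f\cdot\bm n$ indeed defines a meaningful trace in $H^{\alpha-1/2}(\partial\Omega)$ (this is exactly what the auxiliary hypothesis provides). A minor bookkeeping point is that in the multiply-connected case the projection $P_{\mathfrak H_{\rot}}\bm f$ must be tracked, but since harmonic forms are smooth and satisfy $\rot=0$, $\div=0$, and $\bm u\cdot\bm n=0$, they contribute nothing to either scalar equation.
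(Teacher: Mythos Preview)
Your approach is correct and essentially coincides with the paper's: both reduce to scalar Laplace problems for $\rot\bm u$ (Dirichlet) and $\div\bm u$ (Neumann) on the polygon and invoke the embedding $H(\rot;\Omega)\cap H_0(\div;\Omega)\hookrightarrow H^{\alpha}(\Omega)\otimes\mathbb V$ for $\bm u$ itself. The paper's execution is marginally more streamlined---it uses the a~priori bound \eqref{Deltarotu} on $\|\Delta\rot\bm u\|$ directly rather than passing through a Navier biharmonic split, and it derives the Neumann problem for $\div\bm u$ by testing \eqref{prob-source} against $-\grad q$ in weak form rather than by taking a normal trace of the strong equation---but the substance is identical.
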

\begin{proof}
It follows from the embedding
$H(\rot;\Omega)\cap H_0(\div;\Omega)\hookrightarrow H^{\alpha}(\Omega)\otimes\mathbb V$ with  $\alpha>1/2$ \cite{amrouche1998vector} that $\bm u\in H^{\alpha}(\Omega)\otimes\mathbb V$, and 
\[\|\bm u\|_{{\alpha}}\leq C\left( \|\bm u\|+\|\div \bm u\|+\|\rot\bm u\|\right).\]
Furthermore, by the Poincar\'e inequality we have
\[\|\bm u\|_{{\alpha}}\leq C\left( \|\bm u\|+\|\div \bm u\|+\|\grad\rot\bm u\|\right)\leq C\|\bm f\|.\]
Therefore it suffices to show that $\rot\bm u\in H^{1+\alpha}(\Omega)$. Since $\curl\Delta\rot\bm u\in L^2(\Omega)$, we have
\[-\Delta\rot\bm u\in L^2(\Omega).\]
Moreover, $\rot\bm u$ satisfies the boundary condition
$\rot \bm u=0.$
By the regularity of the Laplace problem \cite[Theorem 3.18]{Monk2003}, there exists an $\alpha>1/2$ such that
$\rot\bm u\in H^{1+\alpha}(\Omega)$, and 
\[\|\rot\bm u\|_{{1+\alpha}}\leq C \|\Delta\rot\bm u\|\leq C\|\bm f\|,\]
where we have used \eqref{Deltarotu}.

Multiplying both sides of \eqref{prob-source} by $-\grad q\in \grad H^1(\Omega)$ and integrating over $\Omega$, we obtain
\begin{align*}
(\grad\div \bm u,\grad q)=(\div \bm f,q)-\langle \bm f\cdot\bm n,q\rangle,
\end{align*} 
From the regularity of the Laplace problem \cite[Theorem 3.18]{Monk2003} again, there exists a constant $\alpha>1/2$ such that
\[\|\div\bm u\|_{1+\alpha}\leq C\left( \|\mathrm{div} \bm f\|+\|\bm f\cdot\bm n\|_{\alpha-1/2,\partial\Omega}\right).\]
\end{proof}

\subsection{Characterization of $H_{0}(\curl\div)$ and $H_{\div}(\curl\div)$}\label{characterization}
In the following, we characterize the spaces $H_{\div}(\curl\div;\Omega)$ and $H_{0}(\curl\div;\Omega)$. We denote by $C^{\infty}(\bar\Omega)$ the space of infinitely differentiable functions on $\bar\Omega$ and $C_0^{\infty}(\Omega)$ the space of infinitely differentiable functions with compact support on $\Omega$.

We start by defining a different norm.
\begin{lemma}
The following is an equivalent norm for $H(\grad\rot;\Omega)$:
\begin{align*}
\3bar\bm u\3bar_{H(\grad\rot;\Omega)}=\|\bm u\|+\|\rot\bm u\|+\|\grad\rot\bm u\|.
\end{align*}
\end{lemma}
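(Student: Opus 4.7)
The lower bound $\|\bm u\|_{H(\grad\rot;\Omega)} \le \3bar\bm u\3bar_{H(\grad\rot;\Omega)}$ is immediate from the definitions, since the triple-bar norm merely adds the nonnegative term $\|\rot\bm u\|$. All the content of the lemma lies in the reverse inequality
\[
\|\rot\bm u\| \le C\bigl(\|\bm u\| + \|\grad\rot\bm u\|\bigr)
\]
for every $\bm u \in H(\grad\rot;\Omega)$, i.e., every $\bm u \in L^{2}(\Omega)\otimes\mathbb V$ with $\grad\rot\bm u \in L^{2}(\Omega)\otimes\mathbb V$.

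The approach I would take is via the Ne\v{c}as--Lions lemma. Since $\bm u \in L^{2}$, the quantity $\rot\bm u$ exists a priori only as a distribution. For any $\phi \in C_{0}^{\infty}(\Omega)$, integration by parts (no boundary contribution because $\phi$ is compactly supported) gives
\[
\langle \rot\bm u, \phi\rangle = (\bm u, \curl\phi),
\]
so $|\langle\rot\bm u,\phi\rangle| \le \|\bm u\|\,\|\grad\phi\|$. By density of $C_{0}^{\infty}(\Omega)$ in $H_{0}^{1}(\Omega)$, this yields $\rot\bm u \in H^{-1}(\Omega)$ with $\|\rot\bm u\|_{H^{-1}} \le \|\bm u\|$. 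Combined with the continuous embedding $L^{2}(\Omega) \hookrightarrow H^{-1}(\Omega)$ and the hypothesis $\grad\rot\bm u \in L^{2}$, the Ne\v{c}as--Lions lemma on the bounded Lipschitz domain $\Omega$---which states that any distribution $T$ with $T \in H^{-1}$ and $\grad T \in H^{-1}$ necessarily lies in $L^{2}$ with $\|T\|\le C(\|T\|_{H^{-1}}+\|\grad T\|_{H^{-1}})$---forces $\rot\bm u \in L^{2}(\Omega)$ and delivers
\[
\|\rot\bm u\| \le C(\|\bm u\| + \|\grad\rot\bm u\|),
\]
which is the required estimate.

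The main (and essentially only) obstacle is conceptual: under the paper's general definition $H(D;\Omega)=\{u\in L^{2}:Du\in L^{2}\}$ with $D=\grad\rot$, the intermediate quantity $\rot\bm u$ is not a priori postulated to be in $L^{2}$, and upgrading it from a distribution to an $L^{2}$ function is precisely what Ne\v{c}as--Lions accomplishes. A self-contained alternative would be to observe that $\grad\rot\bm u \in L^{2}$ already gives $\rot\bm u \in H^{1}(\Omega)$ modulo a constant (standard distributional Poincar\'e on balls, patched via connectedness of $\Omega$), then bound the mean $\overline{\rot\bm u}$ by testing against a fixed $\phi_{0}\in C_{0}^{\infty}(\Omega)$ with $\int\phi_{0}=1$ via $(\rot\bm u,\phi_{0}) = (\bm u,\curl\phi_{0})$, and combine with the Poincar\'e--Wirtinger bound $\|\rot\bm u-\overline{\rot\bm u}\| \le C\|\grad\rot\bm u\|$ through $\|\rot\bm u\|^{2} = |\Omega|\,\overline{\rot\bm u}^{\,2} + \|\rot\bm u-\overline{\rot\bm u}\|^{2}$.
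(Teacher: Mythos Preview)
Your proof is correct, and it takes a genuinely different route from the paper. The paper's argument is purely abstract: it observes that $H(\grad\rot;\Omega)$ is a Banach space under \emph{both} norms, notes the trivial inequality $\|\bm u\|_{H(\grad\rot;\Omega)}\le \3bar\bm u\3bar_{H(\grad\rot;\Omega)}$, and then invokes the bounded inverse (open mapping) theorem to conclude equivalence. Your approach, by contrast, establishes the key inequality $\|\rot\bm u\|\le C(\|\bm u\|+\|\grad\rot\bm u\|)$ directly via the Ne\v{c}as--Lions lemma (or, in your alternative, a Poincar\'e--Wirtinger argument together with a controlled mean). This is more constructive and makes the analytic content explicit: under the paper's definition $H(\grad\rot;\Omega)=\{\bm u\in L^2:\grad\rot\bm u\in L^2\}$, the fact that $\rot\bm u\in L^2$ at all is not tautological, and it is precisely what the paper's phrase ``it is easy to check that $H(\grad\rot;\Omega)$ is a Banach space under the two norms'' must implicitly contain in order for the triple-bar norm to be finite on the whole space. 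Your argument supplies that step outright and yields an explicit constant, at the price of citing Ne\v{c}as--Lions; the paper's argument is shorter but leans on the open mapping theorem and leaves the regularity step to the reader.
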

\begin{proof}
	It is easy to check that $H(\grad\rot;\Omega)$ is a Banach space under the two norms $\3bar\cdot\3bar_{H(\grad\rot;\Omega)}$ and $\|\cdot\|_{H(\grad\rot;\Omega)}$. Applying the bounded inverse theorem, we obtain that the two norms are equivalent.
\end{proof}
\begin{theorem}\label{bounded1}
	Define $\gamma_{\tau,\rot}\bm u=\{\bm u\cdot \bm \tau,\rot\bm u\}.$ Then $\gamma_{\tau,\rot}$ is a linear bounded operator from $H(\grad\rot;\Omega)$ to $H^{-1/2}(\partial\Omega)\times H^{1/2}(\Omega)$ with the bound:
	\[\|\gamma_{\tau,\rot}\bm u\|_{H^{-1/2}(\partial\Omega) \times H^{1/2}(\partial\Omega)}\leq C \|\bm u\|_{H(\grad\rot;\Omega)}.\] 
\end{theorem}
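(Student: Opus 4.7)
The plan is to split the trace operator into its two components and handle each by invoking classical trace theorems, using the equivalent norm $\3bar\cdot\3bar_{H(\grad\rot;\Omega)}$ established in the preceding lemma.

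First, observe that for $\bm u \in H(\grad\rot;\Omega)$ the scalar field $\rot\bm u$ already belongs to $H^1(\Omega)$ by definition, with $\|\rot\bm u\|_1 \leq \3bar\bm u\3bar_{H(\grad\rot;\Omega)}$. The standard trace theorem for $H^1(\Omega)$ on a Lipschitz domain then gives $\rot\bm u|_{\partial\Omega} \in H^{1/2}(\partial\Omega)$ together with the bound
\begin{equation*}
\|\rot\bm u\|_{1/2,\partial\Omega} \leq C\,\|\rot\bm u\|_1 \leq C\,\3bar\bm u\3bar_{H(\grad\rot;\Omega)}.
\end{equation*}

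Second, because $\bm u \in L^2(\Omega)\otimes\mathbb V$ and $\rot\bm u \in L^2(\Omega)$, we have $\bm u \in H(\rot;\Omega)$ with $\|\bm u\|_{H(\rot;\Omega)} \leq \3bar\bm u\3bar_{H(\grad\rot;\Omega)}$. It is classical that the tangential trace operator $\bm u \mapsto \bm u\cdot\bm\tau$ extends to a bounded linear map from $H(\rot;\Omega)$ to $H^{-1/2}(\partial\Omega)$; this is proved by a duality argument, defining $\bm u\cdot\bm\tau$ through the integration-by-parts formula
\begin{equation*}
\langle \bm u\cdot\bm\tau, \phi\rangle_{\partial\Omega} = (\rot\bm u, E\phi) - (\bm u, \curl E\phi), \quad \forall \phi\in H^{1/2}(\partial\Omega),
\end{equation*}
where $E\phi \in H^1(\Omega)$ is any continuous extension. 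This yields
\begin{equation*}
\|\bm u\cdot\bm\tau\|_{-1/2,\partial\Omega} \leq C\,\|\bm u\|_{H(\rot;\Omega)} \leq C\,\3bar\bm u\3bar_{H(\grad\rot;\Omega)}.
\end{equation*}

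Combining the two estimates and using the norm equivalence $\3bar\cdot\3bar_{H(\grad\rot;\Omega)} \simeq \|\cdot\|_{H(\grad\rot;\Omega)}$ from the previous lemma gives the desired inequality. The step that might look subtle is the tangential trace on $H(\rot;\Omega)$ in two dimensions, but this is a well-established result which can either be cited or proved via the duality pairing above; everything else reduces to invoking the $H^1$ trace theorem and recognizing the standard inclusion $H(\grad\rot;\Omega) \hookrightarrow H(\rot;\Omega)$.
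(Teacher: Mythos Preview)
Your proof is correct and follows essentially the same approach as the paper: split $\gamma_{\tau,\rot}$ into its two components, invoke the classical tangential trace bound $H(\rot;\Omega)\to H^{-1/2}(\partial\Omega)$ for $\bm u\cdot\bm\tau$ and the standard $H^1$ trace theorem for $\rot\bm u$, and then pass from the equivalent norm $\3bar\cdot\3bar_{H(\grad\rot;\Omega)}$ back to $\|\cdot\|_{H(\grad\rot;\Omega)}$. The only difference is that you spell out the duality definition of the tangential trace, which the paper simply cites.
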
	
\begin{proof}
	Since $\gamma_\tau\bm u=\bm u\cdot \bm \tau$ is a linear bounded operator from $H(\rot;\Omega)$ to $H^{-1/2}(\partial\Omega)$ and $\text{tr} v=v|_{\partial\Omega} $ is a linear bounded operator from $H^1(\Omega)$ to $H^{1/2}(\partial\Omega)$, we have
	\begin{align*}
	&\|\gamma_{\tau,\rot}\bm u\|_{H^{-1/2}(\partial\Omega) \times H^{1/2}(\partial\Omega)}^2\\
		=&{\|\bm u\cdot \bm \tau\|_{H^{-1/2}(\partial\Omega)}^2+\|\rot\bm u\|_{H^{1/2}(\partial\Omega)}^2}\\
		\leq & C\|\bm u\|_{H(\rot;\Omega)}^2 +C\|\rot\bm u\|_{H^1(\Omega)}^2\\
		\leq & C\|\bm u\|_{H(\grad\rot;\Omega)}^2,
	\end{align*}
where we have used the equivalence between the norms $\|\cdot\|_{H(\grad\rot;\Omega)}$ and $\3bar\cdot\3bar_{H(\grad\rot;\Omega)}$.
\end{proof}
Similarly, we have
\begin{theorem}\label{bounded2}
	Define $\gamma_{n,\div}\bm u=\{\bm u\cdot \bm n,\div\bm u\}.$ Then $\gamma_{{n,\div}}$ is a linear bounded operator from $H(\curl\div;\Omega)$ to $H^{-1/2}(\partial\Omega)\times H^{1/2}(\Omega)$ with the bound:
	\[\|\gamma_{n,\div}\bm u\|_{H^{-1/2}(\partial\Omega) \times H^{1/2}(\partial\Omega)}\leq C \|\bm u\|_{H(\curl\div;\Omega)}.\] 
\end{theorem}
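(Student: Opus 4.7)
The strategy is to mirror the proof of Theorem~\ref{bounded1} on the adjoint side of the BGG construction, with the triple $(\curl\div,\div,\bm n)$ playing the role of $(\grad\rot,\rot,\bm\tau)$. The building blocks are two classical trace results: the normal trace $\gamma_n:H(\div;\Omega)\to H^{-1/2}(\partial\Omega)$ and the Dirichlet trace $\operatorname{tr}:H^1(\Omega)\to H^{1/2}(\partial\Omega)$. They are glued together by an equivalent-norm lemma analogous to the one proved immediately before Theorem~\ref{bounded1}.

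The first step is to establish that
\begin{equation*}
\3bar\bm u\3bar_{H(\curl\div;\Omega)}:=\|\bm u\|+\|\div\bm u\|+\|\curl\div\bm u\|
\end{equation*}
defines an equivalent norm on $H(\curl\div;\Omega)$. The inequality $\|\cdot\|_{H(\curl\div;\Omega)}\le\3bar\cdot\3bar_{H(\curl\div;\Omega)}$ is immediate. For the reverse direction I would check that $H(\curl\div;\Omega)$ is a Banach space under both norms and invoke the bounded inverse theorem, exactly as in the $H(\grad\rot;\Omega)$ lemma.

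The second step then follows in direct parallel to the proof of Theorem~\ref{bounded1}: for any $\bm u\in H(\curl\div;\Omega)$,
\begin{align*}
\|\gamma_{n,\div}\bm u\|_{H^{-1/2}(\partial\Omega)\times H^{1/2}(\partial\Omega)}^2
&=\|\bm u\cdot\bm n\|_{H^{-1/2}(\partial\Omega)}^2+\|\div\bm u\|_{H^{1/2}(\partial\Omega)}^2\\
&\le C\|\bm u\|_{H(\div;\Omega)}^2+C\|\div\bm u\|_{H^1(\Omega)}^2\\
&\le C\3bar\bm u\3bar_{H(\curl\div;\Omega)}^2\le C\|\bm u\|_{H(\curl\div;\Omega)}^2,
\end{align*}
where the two classical trace estimates are applied on the second line and the norm equivalence of step one on the last (noting $\|\div\bm u\|_1^2=\|\div\bm u\|^2+\|\curl\div\bm u\|^2$, since the scalar $\curl$ is just a rotation of the gradient).

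The only real obstacle is hidden inside the equivalent-norm lemma: one must justify that if $\bm u\in L^2(\Omega)\otimes\mathbb V$ and $\curl\div\bm u\in L^2(\Omega)\otimes\mathbb V$, then the distribution $\div\bm u$ actually lies in $L^2(\Omega)$. This reduces to the classical fact that a distribution on a bounded Lipschitz domain whose (scalar) gradient lies in $L^2$ lies itself in $L^2$ modulo a constant. With this in hand, completeness of the $\3bar\cdot\3bar$-norm is routine and the bounded inverse theorem closes the argument.
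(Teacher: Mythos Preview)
Your proposal is correct and is exactly the argument the paper has in mind: the paper simply states ``Similarly, we have'' before Theorem~\ref{bounded2}, intending the reader to rerun the proof of Theorem~\ref{bounded1} with $(\div,\bm n)$ in place of $(\rot,\bm\tau)$ and the equivalent $\3bar\cdot\3bar$-norm on $H(\curl\div;\Omega)$. Your explicit identification of the one nontrivial step---that $\bm u\in L^2$ and $\curl\div\bm u\in L^2$ force $\div\bm u\in L^2$ via the Deny--Lions/Ne\v{c}as lemma---is a point the paper leaves implicit.
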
	

\begin{theorem}\label{Surjection_gradrot}
The trace operator $\gamma_{\tau,\rot}$ is surjective from  $H(\grad\rot;\Omega)$ to $H^{-1/2}(\partial\Omega)\times H^{1/2}(\partial\Omega)$. That is, 
	for any $g_1\in H^{-1/2}(\partial\Omega)$ and $g_2\in H^{1/2}(\partial\Omega)$, there exists $\bm u\in H(\grad\rot;\Omega)$ such that $\bm u\cdot \bm \tau|_{\partial\Omega}=g_1$, $\rot\bm u|_{\partial\Omega}=g_2$, and 
	\[\|\bm u\|_{H(\grad\rot;\Omega)}\leq C\left(\|g_1\|_{H^{-1/2}(\partial\Omega)}+\|g_2\|_{H^{1/2}(\partial\Omega)}\right).\]
\end{theorem}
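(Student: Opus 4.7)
The plan is to look for $\bm u$ in the form $\bm u=\curl\phi$ for some scalar $\phi\in H^1(\Omega)$. Using the two-dimensional identities $\rot(\curl\phi)=-\Delta\phi$ and $\curl\phi\cdot\bm\tau=-\partial_n\phi$, if I can find $\phi$ solving the Neumann problem
\[
-\Delta\phi=w\ \text{in}\ \Omega,\qquad \partial_n\phi=-g_1\ \text{on}\ \partial\Omega,
\]
where $w\in H^1(\Omega)$ is chosen so that $w|_{\partial\Omega}=g_2$, then $\bm u:=\curl\phi$ automatically lies in $H(\grad\rot;\Omega)$ (since $\rot\bm u=w\in H^1$) with the correct traces $\rot\bm u|_{\partial\Omega}=g_2$ and $\bm u\cdot\bm\tau=g_1$.

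The only nontrivial issue in this reduction is the Neumann compatibility condition $\int_\Omega w\,dx=\int_{\partial\Omega}g_1\,ds$. To enforce it without losing norm control, I would first pick any $H^1$-extension $w_0$ of $g_2$ with $\|w_0\|_1\le C\|g_2\|_{1/2,\partial\Omega}$, fix once and for all an auxiliary function $\chi_0\in H^1_0(\Omega)$ with $\int_\Omega\chi_0=1$, and set $w:=w_0+\bigl(\int_{\partial\Omega}g_1-\int_\Omega w_0\bigr)\chi_0$. Using $|\langle g_1,1\rangle_{\partial\Omega}|\le C\|g_1\|_{-1/2,\partial\Omega}$ this yields $\|w\|_1\le C\bigl(\|g_1\|_{-1/2,\partial\Omega}+\|g_2\|_{1/2,\partial\Omega}\bigr)$. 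Lax--Milgram on $H^1(\Omega)/\mathbb R$ applied to the weak Neumann formulation
\[
(\grad\phi,\grad v)=(w,v)-\langle g_1,v\rangle_{H^{-1/2}(\partial\Omega)\times H^{1/2}(\partial\Omega)},\qquad \forall v\in H^1(\Omega)/\mathbb R,
\]
then produces a unique $\phi$ with $\|\grad\phi\|\le C(\|w\|+\|g_1\|_{-1/2,\partial\Omega})$. Because $-\Delta\phi=w\in L^2$, one has $\grad\phi\in H(\div;\Omega)$ and $\partial_n\phi\in H^{-1/2}(\partial\Omega)$ is well-defined; Green's identity applied to the weak form recovers $\partial_n\phi=-g_1$ in duality.

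Setting $\bm u:=\curl\phi$, we have $\rot\bm u=-\Delta\phi=w$ and $\grad\rot\bm u=\grad w\in L^2$, hence $\bm u\in H(\grad\rot;\Omega)$ with $\|\bm u\|^2_{H(\grad\rot;\Omega)}\le\|\grad\phi\|^2+\|\grad w\|^2\le C\bigl(\|g_1\|^2_{-1/2,\partial\Omega}+\|g_2\|^2_{1/2,\partial\Omega}\bigr)$, together with $\rot\bm u|_{\partial\Omega}=w|_{\partial\Omega}=g_2$ and $\bm u\cdot\bm\tau=-\partial_n\phi=g_1$. The main point requiring care is that the identities $\curl\phi\cdot\bm\tau=-\partial_n\phi$ and the recovery of $\partial_n\phi=-g_1$ from the variational formulation are understood only in the $H^{-1/2}$--$H^{1/2}$ duality pairing on $\partial\Omega$; both follow from the standard Green's identities for $H(\rot)$ and $H(\div)$ on Lipschitz domains. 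Importantly, no topological obstruction arises on multiply connected $\Omega$, since only the single global condition $\int_\Omega w=\int_{\partial\Omega}g_1$ is required for solvability of the Neumann problem, with no per-component compatibility needed.
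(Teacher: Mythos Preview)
Your proposal is correct and follows essentially the same construction as the paper's proof: both extend $g_2$ to a function in $H^1(\Omega)$, add a compactly supported correction to enforce the Neumann compatibility $\int_\Omega w=\langle g_1,1\rangle_{\partial\Omega}$, solve the resulting Neumann problem for a scalar potential, and set $\bm u=\curl$ of that potential. The only cosmetic difference is the sign convention you adopt ($\rot\curl\phi=-\Delta\phi$ and $\curl\phi\cdot\bm\tau=-\partial_n\phi$) versus the paper's ($\rot\curl w=\Delta w$ and $\curl w\cdot\bm\tau=\grad w\cdot\bm n$); both are internally consistent choices of orientation for $\curl$ and $\bm\tau$.
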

\begin{proof}
	For $g_2\in H^{1/2}(\partial\Omega)$, there exists $v\in H^1(\Omega)$ such that $v|_{\partial \Omega}=g_2$ and 
	\begin{align*}
		\|v\|_{H^1(\Omega)}\leq C\|g_2\|_{H^{1/2}(\partial \Omega)}.
	\end{align*}
	Take $x_0$ and $r$ such that $B(x_0,r)\subset \Omega$ and define
	\begin{align*}
		\eta_{r,x_0}(x)=\frac{1}{r^2}\eta\big(\frac{x-x_0}{r}\big),
	\end{align*}
	where 
		\begin{align*}
		\eta(x)=
		\begin{cases}
			C_1\exp(\frac{1}{|x|^2-1}), &|x|<1,\\
			0,&|x|\geq 1
		\end{cases}
	\end{align*}
	with $C_1=\left(\int_{\mathbb R^2}\exp\big(\frac{1}{|x|^2-1}\big)\d x\right)^{-1}$.
	Then we have $\eta_{r,x_0}(x)\in C_0^{\infty}(\Omega)$ and $\int_{\Omega}\eta_{r,x_0}(x)\d x =1$. 
	Let $C_0=\langle g_1,1\rangle_{\partial\Omega}-(v,1)$ and $\widetilde v=v+C_0\eta_{r,x_0}$. Then 
	\[(\widetilde v,1)=(v,1)+C_0(\eta_{r,x_0},1)=\langle g_1,1\rangle_{\partial\Omega}\]
	and 
	\begin{align}
		\|\widetilde v\|_{H^1(\Omega)}&\leq \|v\|_{H^1(\Omega)}+C \big(\langle g_1,1\rangle_{\partial\Omega}-(v,1)\big)\nonumber\\
		&\leq C\left(\|v\|_{H^1(\Omega)}+\|g_1\|_{H^{-1/2}(\partial \Omega)}\right)\nonumber\\
		&\leq C\left(\|g_2\|_{H^{1/2}(\partial \Omega)}+\|g_1\|_{H^{-1/2}(\partial \Omega)}\right).\label{tildev}
	\end{align}
Now we seek $w\in H^1(\Omega)$ such that 
\begin{align*}
	-\Delta w&=-\widetilde v\ \text{in } \Omega,\\
	\grad w\cdot \bm n&=g_1 \text{ on } \partial \Omega,
\end{align*}
where $\widetilde v$ and $g_1$ satisfy 
\[-(\widetilde v,1)+\langle g_1,1\rangle_{\partial \Omega}=0.\]
By virtual of the regularity result of the elliptic problem \cite[Theorem 3.18]{Monk2003},  we have
\begin{align}\label{w}
	\|w\|_{H^1(\Omega)}\leq C\left(\|\widetilde v\|+\|g_1\|_{H^{-1/2}(\partial\Omega)}\right).
\end{align}
Take $\bm u=\curl w$. Then $\bm u\in L^2(\Omega)$ and $\rot\bm u=\Delta w=\widetilde v\in H^1(\Omega)$, and hence $\bm u\in H(\grad\rot;\Omega).$
Restricted on $\partial \Omega$, 
\begin{align*}
	\bm u\cdot \bm \tau &=\curl w\cdot \bm \tau =\grad w\cdot \bm n=g_1,\\
	\rot \bm u&=\Delta w=\widetilde v=v=g_2.
\end{align*}
Combining \eqref{tildev} and \eqref{w}, we obtain
\begin{align*}
	\|\bm u\|_{H(\grad\rot;\Omega)}&\leq \|\bm u\|+\|\grad\rot \bm u\|\\
	&= \|\curl w\|+\|\grad \widetilde v\|\\
	&\leq \|w\|_{H^1(\Omega)}+\|\widetilde v\|_{H^1(\Omega)}\\
	&\leq C\left(\|\widetilde v\|+\|g_1\|_{H^{-1/2}(\partial\Omega)}\right)+\|\widetilde v\|_{H^1(\Omega)}\\
	&\leq C \left(\|g_1\|_{H^{-1/2}(\partial\Omega)}+\|g_2\|_{H^{1/2}(\partial\Omega)}\right).
	\end{align*}
\end{proof}
\begin{lemma}\label{dense}
	$ C^{\infty}(\overline{\Omega})\otimes \mathbb V$ is dense in $H(\grad\rot;\Omega)$.
\end{lemma}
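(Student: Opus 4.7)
We reduce density in $H(\grad\rot;\Omega)$ to the classical density of $C^\infty(\overline\Omega)$ in $H^1(\Omega)$ and $H^2(\Omega)$ by constructing, for each $\bm u\in H(\grad\rot;\Omega)$, a smoother intermediate field carrying the same vorticity, in the spirit of the $\grad\rot$ complex \eqref{grad-curl-domain2D}. Thanks to the equivalent norm $\3bar\cdot\3bar_{H(\grad\rot;\Omega)}$ just established, approximating $\bm u$ in $H(\grad\rot;\Omega)$ amounts to approximating $\bm u$ in $L^2(\Omega)\otimes\mathbb V$ and $\omega:=\rot\bm u\in H^1(\Omega)$ in $H^1(\Omega)$ simultaneously.

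First I build an $H^2$-representative realizing $\omega$. Using Stein's extension theorem for $H^1$-functions on the Lipschitz domain $\Omega$, lift $\omega$ to $\widetilde\omega\in H^1(B)$, where $B$ is a smooth bounded open set with $\overline\Omega\subset B$ (e.g., a large ball). Solve the Dirichlet problem $-\Delta\widetilde q=\widetilde\omega$ in $B$ with $\widetilde q=0$ on $\partial B$; since $B$ is smooth, standard elliptic regularity gives $\widetilde q\in H^3(B)$. Setting $\bm z:=\curl\widetilde q|_\Omega\in H^2(\Omega)\otimes\mathbb V$ yields $\rot\bm z=-\Delta\widetilde q|_\Omega=\omega$, so $\bm z$ lies in the standard Sobolev space $H^2(\Omega)\otimes\mathbb V\hookrightarrow H(\grad\rot;\Omega)$ and carries the same vorticity as $\bm u$.

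The residual $\bm u-\bm z\in L^2(\Omega)\otimes\mathbb V$ is curl-free in $\Omega$, hence by the $L^2$-Hodge decomposition associated with \eqref{grad-curl-domain2D} admits a representation $\bm u-\bm z=\grad p+\bm h$, where $p\in H^1(\Omega)$ and $\bm h$ belongs to the finite-dimensional space of harmonic $1$-fields (trivial when $\Omega$ is simply-connected, and in the multiply-connected case admitting smooth $\overline\Omega$-representatives after modification by a suitable gradient). Using the classical density of $C^\infty(\overline\Omega)$ in $H^2(\Omega)$ and in $H^1(\Omega)$ on Lipschitz domains, choose $\bm z_n\in C^\infty(\overline\Omega)\otimes\mathbb V$ with $\bm z_n\to\bm z$ in $H^2$ and $p_n\in C^\infty(\overline\Omega)$ with $p_n\to p$ in $H^1$. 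Then $\bm u_n:=\bm z_n+\grad p_n+\bm h\in C^\infty(\overline\Omega)\otimes\mathbb V$ satisfies $\bm u_n\to\bm u$ in $L^2$ and $\grad\rot\bm u_n=\grad\rot\bm z_n\to\grad\rot\bm z=\grad\rot\bm u$ in $L^2$, whence $\bm u_n\to\bm u$ in $H(\grad\rot;\Omega)$.

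The main obstacle is that a direct partition-of-unity localization fails here: for a smooth cutoff $\chi$, the expansion
\begin{equation*}
\grad\rot(\chi\bm u)=\chi\grad\rot\bm u+(\rot\bm u)\grad\chi+\grad\bigl[(\grad\chi)^{\perp}\cdot\bm u\bigr]
\end{equation*}
contains a term whose outer gradient differentiates $\bm u$ itself, and $\grad\bm u$ is \emph{not} controlled by the $H(\grad\rot;\Omega)$-norm. Hence $\chi\bm u$ need not even lie in $H(\grad\rot;\Omega)$, and one cannot simply localize and mollify as for $H(\curl)$ or $H(\div)$. The detour through the smooth super-domain $B$ and full elliptic regularity there is precisely what promotes the nonstandard regularity of $\bm u$ to the standard $H^2$-regularity of $\bm z$, at which point classical Sobolev density takes over.
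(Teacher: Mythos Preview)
Your proof is correct but follows a genuinely different route from the paper's. The paper argues by duality in the style of the classical $H(\curl)$/$H(\div)$ density proofs (Girault--Raviart, Monk): a functional $l\in[H(\grad\rot;\Omega)]'$ vanishing on $C^\infty(\overline\Omega)\otimes\mathbb V$ is associated with its Riesz representative $\bm l$, the field $D\bm l:=\grad\rot\bm l$ is extended by zero to a larger ball, and one shows $\div D\bm l\in H^1_0(\Omega)$; approximating $\div D\bm l$ by $C^\infty_0$ functions then forces $l$ to vanish on all of $H(\grad\rot;\Omega)$. Your argument is instead constructive: you peel off a smooth curl-carrier $\bm z\in H^2(\Omega)\otimes\mathbb V$ via elliptic regularity on a smooth super-domain, write the curl-free remainder as a gradient plus a smooth cohomology representative, and invoke classical density of $C^\infty(\overline\Omega)$ in $H^1$ and $H^2$. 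Your route makes the role of the $\grad\rot$ complex explicit and avoids the trace/extension-by-zero machinery, at the cost of importing Stein extension and elliptic regularity on $B$; the paper's route is more self-contained and dovetails with the subsequent characterization of $H_0(\curl\div;\Omega)$, for which the adjoint-operator viewpoint is exactly what is needed.
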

\begin{proof}
We follow the proof of \cite[Theorem 2.4]{Monk2003}, which is based upon the following property of Banach spaces:
	
	\textit{A subspace $\mathcal M$ of a Banach space $M$ is dense in $M$ if and only if every element of $M^\prime$ that vanishes on $\mathcal M$ also vanishes on $M$.}
	
	Let $l\in [H(\grad\rot;\Omega)]^\prime$ and let $\bm l$ be the element of  $H(\grad\rot;\Omega)$ associated with $l$ by
	\[\langle l, \bm u\rangle=(\bm l,\bm u)+(\grad \rot\bm l,\grad\rot\bm u)\]
	Now, we assume that $l$ vanishes on $ C^{\infty}(\overline{\Omega})\otimes \mathbb V$. Denote $D\bm l=\grad\rot\bm l$. Let $\widetilde{\bm l}$ and $\widetilde{D \bm l}$ denote the extensions of $\bm l$ and $D\bm l$ by zero outside $\Omega$. The the above formula can be rewritten as 
		\[\int_{\mathbb R^2}\widetilde{\bm l}\cdot\bm \phi+\widetilde{D \bm l} \cdot \grad\rot\bm \phi\d x =0, \ \forall \bm \phi\in C^{\infty}(\mathbb R^2)\otimes \mathbb V,\]
which implies that, in the sense of distributions,
\begin{align}\label{I}
	\widetilde{\bm l}=\curl\div\widetilde{D \bm l}\in L^2(\mathbb R^2)\otimes \mathbb V.
\end{align}
Thus $\widetilde{D \bm l}\in H(\curl\div;\mathbb R^2)$, and then $\widetilde{D \bm l}\in H(\curl\div;\Omega)$. Now let $\mathcal O$ be a ball such that
$\Omega\subset\mathcal O$. Then $\mathcal O\backslash \overline\Omega$ is a bounded Lipschitz domain and $\widetilde{D \bm l}|_{\mathcal O\backslash \overline\Omega}=0$. Since $\gamma_n\bm u=\bm u\cdot\bm n$ is a bounded operator from $H(\div;\mathcal O\backslash \overline\Omega)\rightarrow H^{-1/2}(\partial \mathcal O\cup \partial \Omega)$, we have $\widetilde{D \bm l}\cdot\bm n|_{\partial\Omega}=0$. Then $\widetilde{D \bm l}\in H_0(\div;\Omega)$, and hence ${D \bm l}\in H_0(\div;\Omega)$. Similarly, we can get $\div{D \bm l}\in H_0^1(\Omega)$. Using ${\bm l}=\curl\div{D \bm l}$ and ${D \bm l}\in H_0(\div;\Omega)$, we have 
\begin{align*}
	\langle l,\bm u\rangle &=(\bm l,\bm u)+(\grad \rot\bm l,\grad\rot\bm u)\\
	&=(\curl\div{D \bm l},\bm u)-(\div D\bm l,\rot\bm u),\ \forall \bm u\in H(\grad\rot;\Omega)
	\end{align*}
Since $C_0^{\infty}(\Omega)$ is dense in $H_0^1(\Omega)$, there is a sequence $\{\phi_n\}_{n=1}^{\infty}\subset C_0^{\infty}(\Omega)$ such that $\phi_n\rightarrow \div D\bm l$ in $H_0^1(\Omega)$ as $n\rightarrow\infty$. Then
\begin{align*}
	\langle l,\bm u\rangle &=(\curl\div{D \bm l},\bm u)-(\div D\bm l,\rot\bm u)\\
&=\lim_{n\rightarrow \infty}\{(\curl \phi_n,\bm u)-( \phi_n,\rot\bm u)\}\\
	&=0, \ \forall \bm u\in H(\grad\rot;\Omega).
	\end{align*}
Therefore $l$ also vanishes on $H(\grad\rot;\Omega)$.
\end{proof}
Similarly, we can prove the following lemma.
\begin{lemma}\label{densecurldiv}
	$ C^{\infty}(\overline{\Omega})\otimes \mathbb V$ is dense in $H(\curl\div;\Omega)$.
\end{lemma}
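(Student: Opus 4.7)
The plan is to mimic the proof of Lemma~\ref{dense}, interchanging the roles of $\grad\rot$ and $\curl\div$. By the Banach-space density criterion recalled inside that proof, I would take an arbitrary $l\in [H(\curl\div;\Omega)]'$ with Riesz representative $\bm l\in H(\curl\div;\Omega)$, so that
$\langle l,\bm u\rangle=(\bm l,\bm u)+(\curl\div\bm l,\curl\div\bm u)$,
assume $l$ annihilates $C^\infty(\overline\Omega)\otimes\mathbb V$, and set $D\bm l:=\curl\div\bm l$. The goal is to prove $l$ vanishes on all of $H(\curl\div;\Omega)$. Denoting by $\widetilde{\bm l}$ and $\widetilde{D\bm l}$ the extensions by zero to $\mathbb R^2$ and testing the vanishing hypothesis against $\bm\phi\in C^\infty_0(\mathbb R^2)\otimes\mathbb V$ yields
\[
\int_{\mathbb R^2}\widetilde{\bm l}\cdot\bm\phi+\widetilde{D\bm l}\cdot\curl\div\bm\phi\,dx=0.
\]
Two formal integrations by parts against the compactly supported $\bm\phi$ (shifting $\curl$ across to obtain $\rot\widetilde{D\bm l}\cdot\div\bm\phi$, then shifting $\div$ across to obtain $-\grad\rot\widetilde{D\bm l}\cdot\bm\phi$) identify $\widetilde{\bm l}=\grad\rot\widetilde{D\bm l}$ in the sense of distributions, so $\widetilde{D\bm l}\in H(\grad\rot;\mathbb R^2)$, and in particular $D\bm l\in H(\grad\rot;\Omega)$.

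Next I would extract the boundary conditions on $D\bm l$. Embedding $\Omega$ in a ball $\mathcal O\supset\overline\Omega$ and invoking Theorem~\ref{bounded1} on the exterior Lipschitz domain $\mathcal O\setminus\overline\Omega$, where $\widetilde{D\bm l}$ is identically zero, the exterior tangential trace $\widetilde{D\bm l}\cdot\bm\tau$ and the exterior rot-trace $\rot\widetilde{D\bm l}$ on $\partial\Omega$ both vanish. Because $\widetilde{D\bm l}\in H(\rot;\mathbb R^2)$ the tangential trace has no jump across $\partial\Omega$, and because $\rot\widetilde{D\bm l}\in H^1(\mathbb R^2)$ the Dirichlet trace of $\rot$ has no jump either; consequently the interior traces also vanish, giving $D\bm l\cdot\bm\tau=0$ in $H^{-1/2}(\partial\Omega)$ and $\rot D\bm l\in H^1_0(\Omega)$.

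To finish, for any $\bm u\in H(\curl\div;\Omega)$ one has $\div\bm u\in H^1(\Omega)$, hence $\bm u\in H(\div;\Omega)$, so the standard tangential-trace duality on $H(\rot;\Omega)$ and normal-trace duality on $H(\div;\Omega)$ are available. Integrating by parts using $\bm l=\grad\rot D\bm l$ and $\rot D\bm l=0$ on $\partial\Omega$ gives $(\bm l,\bm u)=-(\rot D\bm l,\div\bm u)$, while integrating by parts using $D\bm l\cdot\bm\tau=0$ gives $(D\bm l,\curl\div\bm u)=(\rot D\bm l,\div\bm u)$. Adding, $\langle l,\bm u\rangle=0$. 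For a closer parallel to Lemma~\ref{dense}, the same conclusion can be reached by approximating $\rot D\bm l\in H^1_0(\Omega)$ by $\phi_n\in C^\infty_0(\Omega)$ and passing to the limit, since each $\phi_n$-level identity produces no boundary contribution by compactness of support.

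The main obstacle is justifying these two integrations by parts at the stated regularity. This reduces to the classical trace theorems for $H(\rot;\Omega)$ and $H(\div;\Omega)$, not for $H(\curl\div;\Omega)$ itself, so there is no circularity; in particular Theorem~\ref{bounded2}, which is already available, may be used freely to handle the relevant boundary dualities.
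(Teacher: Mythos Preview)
Your proposal is correct and is precisely the argument the paper intends: the paper's own proof is simply ``Similarly, we can prove the following lemma,'' meaning one swaps the roles of $\grad\rot$ and $\curl\div$ in the proof of Lemma~\ref{dense}, exactly as you have done. Your additional remark that the final step can also be carried out by approximating $\rot D\bm l\in H^1_0(\Omega)$ with $\phi_n\in C^\infty_0(\Omega)$ matches the paper's original limiting argument in Lemma~\ref{dense} verbatim.
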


 \begin{lemma}\label{integrationbyparts}
 For $\bm u\in H(\grad\rot;\Omega) \text{ and } \bm w\in H(\curl\div;\Omega)$, the following identity holds
\begin{align}\label{integretionbyparts-identity}
 (\bm u,\curl\div \bm w)+(\grad\rot\bm u,\bm w)=\langle \bm u\cdot\bm \tau,\div \bm w\rangle_{\partial\Omega}+\langle \bm w\cdot\bm n,\rot\bm u\rangle_{\partial\Omega}.
\end{align}
 \end{lemma}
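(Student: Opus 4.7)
My plan is to first establish the identity for smooth vector fields by classical integration by parts, and then extend it to general $\bm u\in H(\grad\rot;\Omega)$ and $\bm w\in H(\curl\div;\Omega)$ via density together with continuity of the trace operators.

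For $\bm u,\bm w\in C^{\infty}(\overline{\Omega})\otimes\mathbb V$ I would apply two standard adjoint relations. Testing the scalar $\rot\bm u$ against the vector $\bm w$ with the adjoint pair $(\grad,-\div)$ gives
\[
(\grad\rot\bm u,\bm w)=-(\rot\bm u,\div\bm w)+\langle \rot\bm u,\bm w\cdot\bm n\rangle_{\partial\Omega}.
\]
Testing the scalar $\div\bm w$ against the vector $\bm u$ with the adjoint pair $(\curl,\rot)$ gives
\[
(\bm u,\curl\div\bm w)=(\rot\bm u,\div\bm w)+\langle \bm u\cdot\bm\tau,\div\bm w\rangle_{\partial\Omega}.
\]
Summing the two identities, the interior $(\rot\bm u,\div\bm w)$ terms cancel and \eqref{integretionbyparts-identity} emerges for smooth fields.

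To pass to the general case, I would invoke Lemmas \ref{dense} and \ref{densecurldiv} to pick approximating sequences $\bm u_n\to\bm u$ in $H(\grad\rot;\Omega)$ and $\bm w_n\to\bm w$ in $H(\curl\div;\Omega)$, each with $\bm u_n,\bm w_n\in C^{\infty}(\overline{\Omega})\otimes\mathbb V$. Each pair satisfies the identity by the smooth case. The two interior $L^2$ pairings in \eqref{integretionbyparts-identity} pass to the limit by Cauchy--Schwarz since $\bm u_n,\grad\rot\bm u_n,\bm w_n,\curl\div\bm w_n$ all converge in $L^2(\Omega)$. For the boundary terms I would use Theorems \ref{bounded1} and \ref{bounded2}, which yield $\bm u_n\cdot\bm\tau\to\bm u\cdot\bm\tau$ and $\bm w_n\cdot\bm n\to\bm w\cdot\bm n$ in $H^{-1/2}(\partial\Omega)$, and $\rot\bm u_n\to\rot\bm u$ and $\div\bm w_n\to\div\bm w$ in $H^{1/2}(\partial\Omega)$; joint continuity of the $H^{-1/2}$--$H^{1/2}$ duality then permits passage to the limit in both boundary pairings.

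The main technical point I anticipate is not an estimate but a structural one: the two boundary pairings in \eqref{integretionbyparts-identity} must be interpreted as $H^{-1/2}$--$H^{1/2}$ dualities on $\partial\Omega$ rather than $L^{2}$ inner products. What makes the density argument succeed is that the two trace theorems interlock perfectly: the $H^{-1/2}$ factor coming out of one space is always paired with the $H^{1/2}$ factor coming out of the other. Once this is recognized, no regularity beyond membership in $H(\grad\rot;\Omega)$ and $H(\curl\div;\Omega)$ is needed and the proof reduces to the two steps above.
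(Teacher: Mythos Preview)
Your proposal is correct and matches the paper's own proof exactly: the paper verifies \eqref{integretionbyparts-identity} for smooth $\bm u,\bm w$ and then invokes Lemmas~\ref{dense}, \ref{densecurldiv} and Theorems~\ref{bounded1}, \ref{bounded2} to pass to the limit by density. Your write-up simply makes explicit the two elementary integrations by parts and the $H^{-1/2}$--$H^{1/2}$ duality interpretation of the boundary terms that the paper leaves implicit.
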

\begin{proof}
	It is easy to check that \eqref{integretionbyparts-identity} holds for smooth functions $\bm u,\bm w$. 
	By Lemma \ref{dense}, Lemma \ref{densecurldiv}, Theorem \ref{bounded1}, and Theorem \ref{bounded2}, we can prove \eqref{integretionbyparts-identity} for $\bm u\in H(\grad\rot;\Omega)$ and $\bm w\in H(\curl\div;\Omega)$.
\end{proof}

\begin{lemma}\label{H0(curldiv)}
	 The space $H_{0}(\curl\div;\Omega)$ can be characterized as 
	\[H_{0}(\curl\div;\Omega)=\{\bm w\in H(\curl\div;\Omega):\div \bm w=0,\ \bm u\cdot \bm n=0 \text{ on } \partial \Omega\}.\]
\end{lemma}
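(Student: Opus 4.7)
My plan is to prove the two inclusions by combining the integration by parts formula (Lemma~\ref{integrationbyparts}) with the surjectivity of the joint trace $\gamma_{\tau,\rot}$ (Theorem~\ref{Surjection_gradrot}). Throughout I use the defining property of $H_{0}(\curl\div;\Omega)$ as the domain of the $L^{2}$-adjoint of $(\grad\rot,H(\grad\rot;\Omega))$, namely that $\bm{w}\in H_{0}(\curl\div;\Omega)$ exactly when the functional $\bm{u}\mapsto (\grad\rot \bm{u},\bm{w})$ extends continuously from $H(\grad\rot;\Omega)$ to $L^{2}(\Omega)\otimes \mathbb{V}$.

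For the inclusion $\supseteq$, I take $\bm{w}\in H(\curl\div;\Omega)$ with $\div \bm{w}=0$ and $\bm{w}\cdot\bm{n}=0$ on $\partial\Omega$. Applying Lemma~\ref{integrationbyparts} makes both boundary terms in \eqref{integretionbyparts-identity} vanish, yielding
\begin{equation*}
(\grad\rot\bm{u},\bm{w})=-(\bm{u},\curl\div \bm{w}),\qquad \forall\, \bm{u}\in H(\grad\rot;\Omega).
\end{equation*}
Since $\curl\div \bm{w}\in L^{2}$, the right-hand side is bounded by $\|\curl\div \bm{w}\|\,\|\bm{u}\|$, so $\bm{w}$ lies in the adjoint domain $H_{0}(\curl\div;\Omega)$.

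For the reverse inclusion $\subseteq$, I start with $\bm{w}\in H_{0}(\curl\div;\Omega)$, so there is $\bm{g}\in L^{2}(\Omega)\otimes\mathbb{V}$ with $(\grad\rot \bm{u},\bm{w})=(\bm{u},\bm{g})$ for every $\bm{u}\in H(\grad\rot;\Omega)$. Restricting to test functions $\bm{u}\in C_{0}^{\infty}(\Omega)\otimes\mathbb{V}$ and using the distributional identities $(\grad f,\bm{v})=-(f,\div \bm{v})$ and $(\rot \bm{u},s)=(\bm{u},\curl s)$, I identify $\bm{g}=-\curl\div \bm{w}$ in the sense of distributions; in particular $\curl\div \bm{w}\in L^{2}$, so $\bm{w}\in H(\curl\div;\Omega)$. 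Substituting this identification back into the adjoint identity and subtracting the formula in Lemma~\ref{integrationbyparts} gives
\begin{equation*}
\langle \bm{u}\cdot\bm{\tau},\div \bm{w}\rangle_{\partial\Omega}+\langle \bm{w}\cdot\bm{n},\rot\bm{u}\rangle_{\partial\Omega}=0,\qquad \forall\, \bm{u}\in H(\grad\rot;\Omega).
\end{equation*}

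To separate the two boundary pieces I invoke Theorem~\ref{Surjection_gradrot}: any $(g_{1},g_{2})\in H^{-1/2}(\partial\Omega)\times H^{1/2}(\partial\Omega)$ is realised as $(\bm{u}\cdot\bm{\tau},\rot \bm{u})$ for some $\bm{u}\in H(\grad\rot;\Omega)$. Choosing $g_{2}=0$ with $g_{1}$ arbitrary forces $\div \bm{w}=0$ in $H^{1/2}(\partial\Omega)$, and choosing $g_{1}=0$ with $g_{2}$ arbitrary forces $\bm{w}\cdot\bm{n}=0$ in $H^{-1/2}(\partial\Omega)$. This completes the characterisation. The only delicate step is the distributional identification $\bm{g}=-\curl\div\bm{w}$; once that is in hand the surjectivity of $\gamma_{\tau,\rot}$ does the real work by decoupling the two boundary conditions, while the two boundedness theorems (Theorems~\ref{bounded1} and~\ref{bounded2}) guarantee that all of the dualities appearing in the computation are well defined.
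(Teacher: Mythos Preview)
Your proposal is correct and follows essentially the same approach as the paper: both proofs hinge on the integration-by-parts identity of Lemma~\ref{integrationbyparts} and the surjectivity of $\gamma_{\tau,\rot}$ from Theorem~\ref{Surjection_gradrot}. Your version is in fact slightly more explicit than the paper's, spelling out the distributional identification $\bm g=-\curl\div\bm w$ via compactly supported test functions and then separately choosing $(g_1,0)$ and $(0,g_2)$ to decouple the two boundary conditions, whereas the paper compresses these steps into a couple of sentences.
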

\begin{proof}
	If $\bm w\in H_{0}(\curl\div;\Omega)$, i.e., the domain of the adjoint of $(\grad\rot,H(\grad\rot;\Omega))$, then there exists $\bm v\in L^2(\Omega)\otimes \mathbb V$ such that
	\[(\grad\rot\bm u,\bm w)=-(\bm u,\bm v),\quad \bm u\in H(\grad\rot;\Omega).\]
	Such a function $\bm w$ must be in $H(\curl\div;\Omega)$ and satisfies $\curl\div\bm w=\bm v.$
	Therefore, $\bm w$ belongs to $H_{0}(\curl\div;\Omega)$ if and only if
	\[(\grad\rot\bm u,\bm w)=-(\bm u,\curl\div\bm w),\quad \bm u\in H(\grad\rot;\Omega).\]
	From Lemma \ref{integrationbyparts}, the above identity holds if and only if 
	\[\langle \bm u\cdot\bm \tau,\div \bm w\rangle_{\partial\Omega}+\langle \bm w\cdot\bm n,\rot\bm u\rangle_{\partial\Omega}=0.\]
	which holds when $\gamma_{n,\div}\bm w=\{\bm w\cdot \bm n, \div \bm w\}=0$ since $\gamma_{\tau,\rot}$ is surjective from  $H(\grad\rot;\Omega)$ to $H^{-1/2}(\partial\Omega)\times H^{1/2}(\partial\Omega)$ (see Theorem \ref{Surjection_gradrot}).	\end{proof}

\begin{lemma}
	 The space $H_{\div}(\curl\div;\Omega)$ can be characterized as 
	\[H_{\div}(\curl\div;\Omega)=\{\bm w\in H(\curl\div;\Omega):\div \bm w=0\text{ on } \partial \Omega\}.\]
\end{lemma}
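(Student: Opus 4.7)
My proof proposal follows the same template as Lemma \ref{H0(curldiv)}. First, recall that $\bm w\in H_{\div}(\curl\div;\Omega)$ by definition means there exists $\bm v\in L^2(\Omega)\otimes\mathbb V$ with $(\grad\rot\bm u,\bm w)=-(\bm u,\bm v)$ for every $\bm u\in H_{\rot}(\grad\rot;\Omega)$. Choosing test functions $\bm u\in C_0^{\infty}(\Omega)\otimes\mathbb V\subset H_{\rot}(\grad\rot;\Omega)$ forces $\bm w\in H(\curl\div;\Omega)$ with $\curl\div\bm w=\bm v$, so $\bm w\in H_{\div}(\curl\div;\Omega)$ if and only if
\begin{equation*}
(\grad\rot\bm u,\bm w)+(\bm u,\curl\div\bm w)=0,\quad \forall \bm u\in H_{\rot}(\grad\rot;\Omega).
\end{equation*}

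Next, I invoke the integration-by-parts identity of Lemma \ref{integrationbyparts} (which applies since $H_{\rot}(\grad\rot;\Omega)\subset H(\grad\rot;\Omega)$) to rewrite the left-hand side as the boundary pairing $\langle \bm u\cdot\bm\tau,\div\bm w\rangle_{\partial\Omega}+\langle\bm w\cdot\bm n,\rot\bm u\rangle_{\partial\Omega}$. Because $\rot\bm u=0$ on $\partial\Omega$ for every $\bm u\in H_{\rot}(\grad\rot;\Omega)$, the second pairing vanishes identically, so the adjoint condition collapses to
\begin{equation*}
\langle \bm u\cdot\bm\tau,\div\bm w\rangle_{\partial\Omega}=0,\quad \forall \bm u\in H_{\rot}(\grad\rot;\Omega).
\end{equation*}

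The key remaining step, which I expect to be the main obstacle, is to show that the partial trace $\bm u\mapsto \bm u\cdot\bm\tau$ remains surjective from $H_{\rot}(\grad\rot;\Omega)$ onto $H^{-1/2}(\partial\Omega)$ even after imposing the homogeneous constraint $\rot\bm u=0$ on $\partial\Omega$. For this I would rerun the lifting argument in the proof of Theorem \ref{Surjection_gradrot} with the choice $g_2=0$: given $g_1\in H^{-1/2}(\partial\Omega)$, set $v=0$ so that $\widetilde v=C_0\eta_{r,x_0}$ with $C_0=\langle g_1,1\rangle_{\partial\Omega}$, solve the Neumann problem $-\Delta w=-\widetilde v$ in $\Omega$ with $\grad w\cdot\bm n=g_1$ on $\partial\Omega$, and take $\bm u=\curl w$. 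Then $\rot\bm u=\Delta w=\widetilde v$ vanishes on $\partial\Omega$ (since $\eta_{r,x_0}$ has compact support inside $\Omega$ and $v=0$), so $\bm u\in H_{\rot}(\grad\rot;\Omega)$ and $\bm u\cdot\bm\tau=g_1$.

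Combining these pieces, the condition $\langle g_1,\div\bm w\rangle_{\partial\Omega}=0$ for all $g_1\in H^{-1/2}(\partial\Omega)$ forces $\div\bm w=0$ in $H^{1/2}(\partial\Omega)$, establishing the inclusion $H_{\div}(\curl\div;\Omega)\subset\{\bm w\in H(\curl\div;\Omega):\div\bm w=0 \text{ on }\partial\Omega\}$. The converse inclusion is immediate: if $\div\bm w=0$ on $\partial\Omega$ then both boundary terms in Lemma \ref{integrationbyparts} vanish for every $\bm u\in H_{\rot}(\grad\rot;\Omega)$, which exhibits $\bm w$ as an element of the adjoint domain. Note that no boundary condition on $\bm w\cdot\bm n$ arises here, in contrast to Lemma \ref{H0(curldiv)}, because the test space $H_{\rot}(\grad\rot;\Omega)$ is strictly smaller than $H(\grad\rot;\Omega)$, killing the $\rot\bm u$ boundary trace.
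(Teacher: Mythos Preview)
Your proposal is correct and follows exactly the approach the paper intends: the paper's proof consists only of the sentence ``The proof is similar to that of Lemma~\ref{H0(curldiv)},'' and you have carried out that similarity in full, including the one nontrivial adaptation---re-running the lifting construction of Theorem~\ref{Surjection_gradrot} with $g_2=0$ to show that the tangential trace remains surjective onto $H^{-1/2}(\partial\Omega)$ from the constrained space $H_{\rot}(\grad\rot;\Omega)$. Your observation that the constraint $\rot\bm u|_{\partial\Omega}=0$ kills the second boundary pairing, so that no condition on $\bm w\cdot\bm n$ arises, is precisely the point that distinguishes this lemma from Lemma~\ref{H0(curldiv)}.
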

\begin{proof}
	The proof is similar to that of Lemma \ref{H0(curldiv)}.
	\end{proof}

\section{Convergence analysis and explanations of spurious solutions}

 In this section, we prove that the mixed formulations provide correctly convergent solutions for both the source and the eigenvalue problems. Therefore the different solutions by other schemes in Section 2 are spurious. We also provide an explanation of the spurious solutions.  Here we assume that $\Omega$ is simply-connected, and hence $\mathfrak H_{\rot}$ vanishes. Define
\[\mathfrak Z_h=\{\bm v_h\in V_h: \grad\rot\bm v_h=0\}.\]
From the finite element complexes in \cite{hu2020simple} and the vanishing $\mathfrak H_{\rot}$, we have $\mathfrak Z_h=\grad S_h$.

\subsection{Source problem}

We first show the convergence for the source problems.

According to \cite[Theorem 5.4]{arnold2018finite},  Scheme \ref{algo1} is stable if the following discrete Poincar\'e inequality holds. The discrete Poincar\'e inequality for $V_{h}$ is due to special structures of the $H(\grad\curl)$-conforming elements and the complexes in \cite{hu2020simple,WZZelement}, i.e., 1) these elements are subspaces of the N\'ed\'elec elements, 2) the 0-forms in the complexes \cite{hu2020simple} are the Lagrange elements (standard finite element differential forms).
\begin{lemma}[discrete Poincar\'e inequality for $V_{h}$]
	For $\bm v_h\in V_h\cap \mathfrak Z_h^{\perp},$ we have
	\begin{equation}\label{poincare}
	\|\bm v_h\|_{H(\grad\rot;\Omega)}\leq C\|\grad\rot\bm v_h\|,
	\end{equation}
	where $C$ is a constant independent of $h$.
\end{lemma}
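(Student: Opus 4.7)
The plan is to establish the discrete Poincar\'e inequality by imitating the standard FEEC argument, exploiting the two structural facts highlighted in the paragraph preceding the lemma: (i) $V_h$ sits inside a discrete subcomplex of the N\'ed\'elec--Lagrange de~Rham complex, and (ii) the finite element complexes of \cite{hu2020simple, WZZelement} come equipped with commuting (bounded) projections that reproduce the exactness of the continuous $\grad\rot$-complex \eqref{grad-curl-domain2D-bdc}. The target inequality then reduces to a standard lifting-and-projecting argument.

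Concretely, I would proceed as follows. First, since $\rot \bm v_h = 0$ on $\partial\Omega$, the scalar Poincar\'e inequality gives $\|\rot\bm v_h\| \leq C\|\grad\rot\bm v_h\|$, so using the equivalence $\|\cdot\|_{H(\grad\rot;\Omega)} \sim \3bar\cdot\3bar_{H(\grad\rot;\Omega)}$ it suffices to prove $\|\bm v_h\| \leq C\|\grad\rot\bm v_h\|$. Set $\bm q := \grad\rot\bm v_h$. By the exactness of \eqref{grad-curl-domain2D-bdc} together with the closed-range theorem, one lifts $\bm q$ to some $\bm u \in H_{\rot}(\grad\rot;\Omega)$ with $\grad\rot\bm u = \bm q$ and
\begin{equation*}
\|\bm u\|_{H(\grad\rot;\Omega)} \leq C\|\bm q\| = C\|\grad\rot\bm v_h\|.
\end{equation*}
Next, apply the bounded commuting cochain projection $\pi_h:H_{\rot}(\grad\rot;\Omega)\to V_h$ supplied by the finite element complex of \cite{hu2020simple}. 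Commutativity yields $\grad\rot(\pi_h \bm u) = \pi_h'(\grad\rot\bm u) = \pi_h'\bm q$, and because $\bm q$ already lives in the discrete range $\grad\rot V_h$ (being equal to $\grad\rot\bm v_h$), the relevant projection acts as the identity on it; hence $\grad\rot(\pi_h\bm u) = \grad\rot\bm v_h$. Consequently $\pi_h\bm u - \bm v_h \in \ker(\grad\rot)\cap V_h = \mathfrak Z_h$, and since $\bm v_h \perp \mathfrak Z_h$ we obtain $\|\bm v_h\| \leq \|\pi_h\bm u\| \leq C\|\bm u\|_{H(\grad\rot;\Omega)} \leq C\|\grad\rot\bm v_h\|$.

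The main obstacle is really a matter of invocation rather than new estimation: one must be sure that the constructions in \cite{hu2020simple, WZZelement} do deliver a projection $\pi_h$ whose $H(\grad\rot)$-operator bound is $h$-independent, and that the companion projection $\pi_h'$ onto the next slot of the complex fixes $\grad\rot V_h$. The first property is the content of the commuting diagram in those papers (the Lagrange part and the N\'ed\'elec part of the subcomplex are standard FEEC spaces with bounded projections, and the $V_h$-projection is built compatibly by a Fortin-type correction). The second property follows from exactness of the discrete complex, which the cited papers establish by dimension count and by verifying that $\mathfrak Z_h = \grad S_h$ on simply-connected domains. With these two structural inputs in hand, the remainder of the argument is the verbatim Arnold--Falk--Winther discrete Poincar\'e inequality, and no additional estimates are required.
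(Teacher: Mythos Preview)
Your argument follows the generic Arnold--Falk--Winther template and is logically sound \emph{provided} the bounded commuting cochain projection $\pi_h:H_{\rot}(\grad\rot;\Omega)\to V_h$ you invoke actually exists with an $h$-independent operator norm. That is precisely the gap: the references \cite{hu2020simple,WZZelement} construct canonical interpolants with approximation estimates (which require extra regularity of the argument), not $L^2$-bounded smoothed projections on all of $H_{\rot}(\grad\rot;\Omega)$. Indeed, the paper itself lists ``bounded cochain projections for the finite elements'' as future work in the concluding section, so the tool you rely on is not available from the cited sources. Your acknowledgement that ``one must be sure that the constructions \dots\ do deliver a projection $\pi_h$'' is exactly the point where the argument currently fails to close.

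The paper sidesteps this by a different, shorter route that exploits the two structural facts stated just before the lemma. Since $V_h\subset P_k^-\Lambda^1$ (the N\'ed\'elec space) and $\mathfrak Z_h=\grad S_h=\grad P_k^-\Lambda^0$, any $\bm v_h\in V_h\cap\mathfrak Z_h^{\perp}$ is automatically $L^2$-orthogonal to \emph{all} N\'ed\'elec gradients. The classical discrete Poincar\'e inequality for the de~Rham pair $(P_k^-\Lambda^0,P_k^-\Lambda^1)$ (for which bounded cochain projections are long established) then gives $\|\bm v_h\|\le C\|\rot\bm v_h\|$, and a scalar Poincar\'e inequality on $\rot\bm v_h\in H_0^1(\Omega)$ finishes the job. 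Thus the paper reduces the $\grad\rot$-complex inequality to the already-known de~Rham one, avoiding any need for new projections. Your approach would become valid once such projections are constructed, but as written it assumes more than the literature currently provides.
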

\begin{proof}
Let $P_{k}^-\Lambda^{l}$ be the standard finite element differential $k$-forms on triangles \cite{arnold2018finite}, i.e., $l=0$ corresponds to the Lagrange element and $l=1$ corresponds to the N\'ed\'elec elements. Due to the interelement continuity, $V_{h}\subset P_{k}^-\Lambda^{1}$. Moreover, $\mathfrak Z_h=\grad S_h=\grad P_{k}^-\Lambda^{0}$. Then \eqref{poincare} follows from the discrete Poincar\'e inequality of $P_{k}\Lambda^{1}$ and $P_{k}\Lambda^{0}$. 
\end{proof}

\begin{theorem}
Suppose that $\Omega$ is a simply-connected Lipschitz polygon. 
	Let $(\bm u_h, \sigma_h)$ be the numerical solution of Scheme \ref{algo1} and $(\bm u,\sigma)$ be the exact solution of the problem \eqref{prob-source}. Then
	\[\|\bm u-\bm u_h\|_{H(\grad\rot;\Omega)}+\|\sigma-\sigma_h\|_1\leq C h^{\alpha}(\|\bm u\|_{\alpha}+\|\rot\bm u\|_{1+\alpha}+\|\sigma\|_{1+\alpha}).\]
\end{theorem}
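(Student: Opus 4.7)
The plan is to place the scheme within the abstract Hodge-Laplacian framework of Arnold--Falk--Winther applied to the $\grad\rot$ complex \eqref{grad-curl-domain2D-bdc}, and then deduce the quasi-optimal estimate followed by concrete interpolation bounds governed by the regularity from Theorem \ref{regularity-withoutbc}. The mixed problem \eqref{mvprob} sits at the second slot of \eqref{grad-curl-domain2D-bdc}, with trial/test pairs in $H_{\rot}(\grad\rot;\Omega)\times H^{1}(\Omega)$; the discrete pair $V_{h}\times S_{h}$ from \cite{hu2020simple,WZZelement} forms a genuine subcomplex since $\grad S_{h}\subset V_{h}$, and $S_{h},V_{h}$ are conforming in $H^{1}(\Omega)$ and $H_{\rot}(\grad\rot;\Omega)$ respectively. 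Combined with the discrete Poincar\'e inequality just established, this makes the standard mixed framework directly applicable.

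The first step is to invoke \cite[Theorems 3.9 and 3.11]{arnold2018finite} (applied to the present subcomplex) to obtain a quasi-optimal estimate
\begin{equation*}
\|\bm u-\bm u_{h}\|_{H(\grad\rot;\Omega)}+\|\sigma-\sigma_{h}\|_{1}\leq C\Big(\inf_{\bm v_{h}\in V_{h}}\|\bm u-\bm v_{h}\|_{H(\grad\rot;\Omega)}+\inf_{\tau_{h}\in S_{h}}\|\sigma-\tau_{h}\|_{1}\Big).
\end{equation*}
The hypotheses needed for this step are the conformity of the discrete subcomplex, the discrete Poincar\'e inequality \eqref{poincare}, and the existence of bounded cochain projections commuting with $\grad$ between $H^{1}(\Omega)\to H_{\rot}(\grad\rot;\Omega)$ and $S_{h}\to V_{h}$; the latter can be built by following the Schöberl--Christiansen--Winther smoothing construction applied to the canonical degrees of freedom in \cite{hu2020simple}, which is valid since the only regularity needed from $\bm u$ is that it lie in $H^{\alpha}(\Omega)\otimes\mathbb{V}$ with $\rot\bm u\in H^{1+\alpha}(\Omega)$, $\alpha>1/2$, exactly as supplied by Theorem \ref{regularity-withoutbc}.

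The second step is to estimate the two infima using standard interpolation. Using the canonical interpolant $\Pi_{h}^{V}$ of the $\grad\rot$-conforming element \cite{hu2020simple}, whose degrees of freedom involve $\bm u$, $\rot\bm u$ and derivatives compatible with the regularity $H^{\alpha}\times H^{1+\alpha}$, one has
\begin{equation*}
\|\bm u-\Pi_{h}^{V}\bm u\|+\|\grad\rot(\bm u-\Pi_{h}^{V}\bm u)\|\leq Ch^{\alpha}\bigl(\|\bm u\|_{\alpha}+\|\rot\bm u\|_{1+\alpha}\bigr),
\end{equation*}
and the Lagrange interpolant $\Pi_{h}^{S}\sigma$ satisfies $\|\sigma-\Pi_{h}^{S}\sigma\|_{1}\leq Ch^{\alpha}\|\sigma\|_{1+\alpha}$. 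Summing gives the claimed estimate after one last use of the equivalence between $\|\cdot\|_{H(\grad\rot;\Omega)}$ and $\3bar\cdot\3bar_{H(\grad\rot;\Omega)}$ to control $\|\rot(\bm u-\bm u_{h})\|$ by $\|\bm u-\bm u_{h}\|+\|\grad\rot(\bm u-\bm u_{h})\|$.

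The main obstacle is the construction and boundedness of the cochain projection on $H_{\rot}(\grad\rot;\Omega)$ at the regularity level $H^{\alpha}$, $\alpha>1/2$: the canonical interpolant of the $H(\grad\rot)$-conforming element requires more regularity than \eqref{regularity-withoutbc} provides, so it must be combined with a smoothing procedure that preserves the boundary condition $\rot\bm u=0$ on $\partial\Omega$ and still commutes with $\grad$ in the subcomplex $S_{h}\xrightarrow{\grad}V_{h}$. Once this projection is available, all remaining ingredients are standard and the estimate follows by simply combining the quasi-optimality with the interpolation bounds and the regularity theorem.
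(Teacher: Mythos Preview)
Your overall strategy---place the scheme in the abstract Hodge--Laplacian framework, obtain quasi-optimality, then apply interpolation estimates---matches the paper's proof. The substantive difference is that you list bounded commuting cochain projections as a required hypothesis and flag their construction as ``the main obstacle.'' The paper does not use them. Because $\Omega$ is simply-connected, $\mathfrak H_{\rot}=0$, and the only structural ingredient actually needed for the quasi-optimal bound is the discrete Poincar\'e inequality \eqref{poincare}, which was just proved directly via the inclusion $V_{h}\subset P_{k}^{-}\Lambda^{1}$ and $\mathfrak Z_{h}=\grad P_{k}^{-}\Lambda^{0}$. The paper therefore appeals to the \emph{proof} of \cite[Theorem~5.5]{arnold2018finite} together with \eqref{poincare}, rather than invoking the abstract theorem as a black box with its projection hypothesis. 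This matters: bounded cochain projections for the $\grad\rot$ elements are explicitly listed in the paper's conclusion as future work, so your route leans on an ingredient that has not been established, whereas the paper's route avoids it entirely.

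Your worry about the canonical interpolant at regularity $H^{\alpha}$, $\alpha>1/2$, is also unnecessary here. The degrees of freedom of the $H(\grad\rot)$ element involve edge moments of $\bm u\cdot\bm\tau$ (well-defined for $\bm u\in H^{\alpha}$ with $\alpha>1/2$) and pointwise/edge data of $\rot\bm u$ (well-defined since $\rot\bm u\in H^{1+\alpha}\hookrightarrow C^{0}$ in 2D). Hence the canonical interpolants $\Pi_{h}$ and $\pi_{h}$ already deliver the approximation rates you state, without any smoothing. Dropping the cochain-projection detour, your argument collapses to exactly the paper's two-line proof.
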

\begin{proof}
When the harmonic function space vanishes, it follows from the proof of \cite[Theorem 5.5]{arnold2018finite} and the discrete Poincar\'e inequality that 
\[\|\bm u-\bm u_h\|_{H(\grad\rot;\Omega)}+\|\sigma-\sigma_h\|_1\leq C(\inf_{\tau_h\in S_h}\|\sigma-\tau_h\|+C\inf_{\bm v_h\in V_h}\|\bm u-\bm v_h\|).\] 
Let $\pi_h$ and $\Pi_h$ be the canonical interpolations to $S_h$ and $V_h$. From their approximation properties \cite{WZZelement,hu2020simple}, we have
\begin{align*}
\|\bm u-\bm u_h\|_{H(\grad\rot;\Omega)}+\|\sigma-\sigma_h\|_1\leq C h^{\alpha}(\|\bm u\|_{\alpha}+\|\rot\bm u\|_{1+\alpha}+\|\sigma\|_{1+\alpha}).
\end{align*}
\end{proof}

Now we are in a position to explain the spurious solutions.

Let $H_{n,\rot}^1(\rot;\Omega)=H^1(\rot;\Omega)\cap X_{\rot}$
denote the space of $H^1(\rot;\Omega)$ vector fields with vanishing normal components and $\rot$ on the boundary, which is a closed subspaces of $ H^1(\rot;\Omega)$. Clearly, $H_{n,\rot}^1(\rot;\Omega)\subset X_{\rot}$. Moreover, by the Poincar\'e inequality and the fact that $\|\grad \bm u\|^2=\|\rot\bm u\|^2+\|\div\bm u\|^2$  for $\bm u\in H_{n,\rot}^1(\rot;\Omega)$ \cite{arnold2018finite}, we have, for $\bm u\in H_{n,\rot}^1(\rot;\Omega)$,
\begin{align*}
	\|\grad\rot \bm u\|^2+\|\grad \bm u\|^2\leq C(\|\grad\rot \bm u\|^2+\|\div \bm u\|^2)\leq C(\|\grad\rot \bm u\|^2+\|\grad \bm u\|^2).
	\end{align*}
Therefore, the restriction of the $X$-norm to $H_{n,\rot}^1(\rot;\Omega)$ is equivalent to the full norm of $H^1(\rot;\Omega)$. It follows from the fact $H_{n,\rot}^1(\rot;\Omega)$ is closed in $H^1(\rot;\Omega)$ that $H_{n,\rot}^1(\rot;\Omega)$ is a closed subspace of $X_{\rot}$. To prove $X_{\rot}\neq H_{n,\rot}^1(\rot;\Omega)$, it suffices to find a function in $X_{\rot}$ which is not in $H_{n,\rot}^1(\rot;\Omega)$. Consider the function $\phi\in H^1(\Omega)$ such that $\Delta \phi\in L^2(\Omega)$ and $\frac{\partial \phi}{\partial n}=0$ on $\partial \Omega$.  When $\Omega$ is a nonconvex polygonal domain, we have $\phi \notin  H^2(\Omega)$. Setting $\bm u=\grad \phi$, we see that $\bm u\in X_{\rot}$ but $\bm u\notin  H_{n,\rot}^1(\rot;\Omega)$. For any such function $\bm u$, we have $\inf_{\bm v\in H_{n,\rot}^1(\rot;\Omega)}\|\bm u-\bm v\|_X=\delta_{\bm u}>0$, where $\delta_{\bm u}$ is the distance of $\bm u$ from $H_{n,\rot}^1(\rot;\Omega)$. Therefore, if the finite element space $V_h$ is contained in $H_{n,\rot}^1(\rot;\Omega)$, then the numerical solution $\bm u_h\in V_h$ can not converge to $\bm u$ in general.
The mixed variational formulation, however, does not suffer from this restriction, and hence does not lead to spurious solutions. 
On the other hand, the choice of finite elements are crucial for the success of the mixed formulations. The finite elements shown above are stable as they fit into complexes.

Since $\dim \mathfrak H_{\rot}=\beta_1=1$ on $\Omega_2$, there is a zero eigenvalue on $\Omega_2$ corresponding to the harmonic forms in $\mathfrak H_{\rot}$. However, Scheme \ref{algo7} and Scheme \ref{algo8} fail to capture this zero eigenmode. The same issue occurs for the numerical solutions of the problem \eqref{prob2}. Generally speaking, this is because the formulations and finite elements do not respect the underlying cohomological structures of the $\grad\rot$ complex.

\subsection{Eigenvalue problem}
To obtain the convergence estimate for Scheme \ref{algo5}, we rewrite \eqref{mvprob-eig} and Scheme \ref{algo5} as follows.

Seek $(\widetilde\lambda,\bm u,\sigma)\in \mathbb R\times H_{\rot}(\grad\rot;\Omega)\times H^1(\Omega)$ such that
\begin{equation}\label{mvprob-eig-u}
\begin{split}
(\grad\rot\bm u,\grad\rot\bm v)+(\bm u,\bm v)+(\grad\sigma,\bm v)&= \widetilde\lambda(\bm u,\bm v), \ \forall \bm v\in  H_{\rot}(\grad\rot;\Omega),\\
(\bm u,\grad \tau)-(\sigma,\tau)&=0,\ \forall \tau\in H^1(\Omega).
\end{split}
\end{equation}

Find $(\widetilde\lambda_h,\bm u_h,\sigma_{h}) \in \mathbb R\times V_h\times S_{h}$,  such that
\begin{equation}\label{eig-s1-u}
\begin{split}
(\grad\rot\bm u_h,\grad\rot\bm v_h)+(\bm u_h,\bm v_h)+(\grad\sigma_h,\bm v_h)&= \widetilde\lambda_h(\bm u_h,\bm v_h), \ \forall \bm v_h\in V_h,\\
(\bm u_h,\grad \tau_h)-(\sigma_h,\tau_h)&=0,\ \forall \tau_h\in S_h.
\end{split}
\end{equation}

Note that $\widetilde\lambda=\lambda+1$ and $\widetilde\lambda_h=\lambda_h+1$. 
We also consider the corresponding source problem and its finite element discretization.

Seek $(\bm u,\sigma)\in H_{\rot}(\grad\rot;\Omega)\times H^1(\Omega)$ such that
\begin{equation}\label{mvprob-f-u}
\begin{split}
(\grad\rot\bm u,\grad\rot\bm v)+(\bm u,\bm v)+(\grad\sigma,\bm v)&=(\bm f,\bm v), \ \forall \bm v\in  H_{\rot}(\grad\rot;\Omega),\\
(\bm u,\grad \tau)-(\sigma,\tau)&=0,\ \forall \tau\in H^1(\Omega).
\end{split}
\end{equation}

Find $(\bm u_h,\sigma_{h}) \in  V_h\times S_{h}$,  such that
\begin{equation}\label{source-s1-u}
\begin{split}
(\grad\rot\bm u_h,\grad\rot\bm v_h)+(\bm u_h,\bm v_h)+(\grad\sigma_h,\bm v_h)&= (\bm f,\bm v_h), \ \forall \bm v_h\in V_h,\\
(\bm u_h,\grad \tau_h)-(\sigma_h,\tau_h)&=0,\ \forall \tau_h\in S_h.
\end{split}
\end{equation}

By suitable modification to the proof of \cite[Theorem 4.8]{arnold2018finite}) and \cite[Theorem 4.9]{arnold2018finite}), we can show the problem \eqref{mvprob-f-u} is well-posed, and \eqref{wellposedness-withoutbc} holds.
Similar to the problem \eqref{prob-source}, we can get the same regularity estimate as Theorem \ref{regularity-withoutbc}.

Define the solution operators $T:L^2(\Omega)\otimes\mathbb V\rightarrow L^2(\Omega)\otimes\mathbb V$ and $S:L^2(\Omega)\otimes\mathbb V\rightarrow H^1(\Omega)$ by
\[T\bm f:=\bm u\text{ and }S\bm f:=\sigma.\]
We also define the discrete solution operators $T_h:L^2(\Omega)\otimes\mathbb V\rightarrow L^2(\Omega)\otimes\mathbb V$ and $S_h:L^2(\Omega)\otimes\mathbb V\rightarrow H^1(\Omega)$ by
\[T_h\bm f:=\bm u_h\text{ and }S_h\bm f:=\sigma_h.\]

From \eqref{mvprob-f-u} and \eqref{source-s1-u}, these operators are bounded and satisfy
\begin{align}\label{boundedness}
\begin{aligned}
&\|T\bm f\|_{H(\grad\rot;\Omega)}\leq \|\bm f\|, \	\|T_h\bm f\|_{H(\grad\rot;\Omega)}\leq \|\bm f\|, \\
& \|S\bm f\|_1\leq \|\bm f\|, \text{ and }\|S_h\bm f\|_1\leq \|\bm f\|.
\end{aligned}
\end{align}

We  have the following orthogonality:
\begin{align}
\begin{aligned}
	(T\bm f- T_h\bm f, \bm v_h)_{H(\grad\rot;\Omega)}+(\grad(S\bm f- S_h\bm f),\bm v_h)&=0, \ \forall \ \bm v_h\in V_h,\\
	(T\bm f- T_h\bm f, \grad \tau_h)-(S\bm f- S_h\bm f,\tau_h)&=0,\ \forall \ \tau_h\in S_h.\label{ortho-property}	
\end{aligned}
\end{align}
Taking $\bm v_h=\grad \tau_h$ in the first equation of \eqref{ortho-property} and subtracting the second equation from the first equation, we obtain
\begin{equation}\label{ortho-property-3}
	(\grad(S\bm f- S_h\bm f),\grad\tau_h)+(S\bm f- S_h\bm f,\tau_h)=0.
\end{equation}

If we can prove $\|T-T_h\|_{L^2\otimes\mathbb V\rightarrow L^2\otimes \mathbb V}\rightarrow 0$, then from the spectral approximation theory in \cite{babuvska1991eigenvalue}, the eigenvalues of \eqref{eig-s1-u} converge to the eigenvalues of \eqref{mvprob-eig-u}, and hence, the eigenvalues of Scheme \ref{algo4} converge to the eigenvalues of \eqref{mvprob-eig}.

To obtain the uniform convergence, we present some preliminary results. First, we will need the Hodge mapping $\check{\bm q}$ of $\bm q_{h}$, which is a continuous approximation of a finite element function \cite{he2019generalized,hiptmair2002finite}. In fact, for any ${\bm q}_h\in \mathfrak Z_h^\perp\cap V_h$, there exists $\check{\bm q}\in H(\rot;\Omega)\cap H_{0}(\div0; \Omega)$ satisfying $\rot \bm q_h=\rot\check{\bm q}$. Here $H_{0}(\div0; \Omega):=\{\bm{u}\in H(\div; \Omega): \div \bm{u}=0\}$.

\begin{lemma}\label{checkqconv}   There exists a constant $C$ independent of $\bm q_h$ and $h$ such that 
	\[\|\check{\bm q}-{\bm q}_h\|\leq Ch^{\alpha}\|\rot\check{\bm q}\|, \quad \forall {\bm q}_h\in \mathfrak Z_h^\perp\cap V_h.\]
\end{lemma}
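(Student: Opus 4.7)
The plan is a discrete Hodge mapping argument, standard in finite element exterior calculus. First I would record the existence and regularity of $\check{\bm q}$: since $\Omega$ is a simply-connected Lipschitz polygon and $\rot \bm q_h\in H^1(\Omega)$ (because $\bm q_h\in V_h\subset H(\grad\rot;\Omega)$), elementary Hodge theory produces a unique $\check{\bm q}\in H(\rot;\Omega)\cap H_0(\div 0;\Omega)$ with $\rot\check{\bm q}=\rot\bm q_h$ and $\check{\bm q}\cdot\bm n=0$ on $\partial\Omega$. The embedding $H(\rot;\Omega)\cap H_0(\div;\Omega)\hookrightarrow H^{\alpha}(\Omega)\otimes\mathbb V$ for some $\alpha>1/2$ (already invoked in Theorem \ref{regularity-withoutbc}) combined with the Poincar\'e--Friedrichs inequality for divergence-free fields yields $\|\check{\bm q}\|_{\alpha}\le C\|\rot\check{\bm q}\|$.

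Next I would use a commuting interpolation $\Pi_h$ associated with the discrete grad-rot complex $S_h\xrightarrow{\grad}V_h$ from \cite{hu2020simple}. Since $V_h\subset \mathcal P_k^-\Lambda^1$, the range $\rot V_h$ is discrete; because $\rot\check{\bm q}=\rot\bm q_h$ already lies in this range, the commutativity forces $\rot(\Pi_h\check{\bm q})=\rot\bm q_h$. Consequently $\Pi_h\check{\bm q}-\bm q_h\in \mathfrak Z_h=\grad S_h$, so $\Pi_h\check{\bm q}-\bm q_h=\grad s_h$ for some $s_h\in S_h$.

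The core estimate follows from two orthogonality facts. From $\bm q_h\in\mathfrak Z_h^{\perp}$ I have $(\bm q_h,\grad s_h)=0$, and from $\div\check{\bm q}=0$ together with $\check{\bm q}\cdot\bm n=0$ on $\partial\Omega$, integration by parts gives $(\check{\bm q},\grad s_h)=0$. Therefore
\begin{align*}
\|\grad s_h\|^2 = (\Pi_h\check{\bm q}-\bm q_h,\grad s_h) = (\Pi_h\check{\bm q}-\check{\bm q},\grad s_h) \le \|\Pi_h\check{\bm q}-\check{\bm q}\|\,\|\grad s_h\|,
\end{align*}
so $\|\grad s_h\|\le \|\Pi_h\check{\bm q}-\check{\bm q}\|$. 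A triangle inequality then yields
\begin{align*}
\|\bm q_h-\check{\bm q}\| \le \|\bm q_h-\Pi_h\check{\bm q}\|+\|\Pi_h\check{\bm q}-\check{\bm q}\| = \|\grad s_h\|+\|\Pi_h\check{\bm q}-\check{\bm q}\| \le 2\|\Pi_h\check{\bm q}-\check{\bm q}\|,
\end{align*}
and the claim follows from the interpolation estimate $\|\Pi_h\check{\bm q}-\check{\bm q}\|\le Ch^{\alpha}\|\check{\bm q}\|_{\alpha}\le Ch^{\alpha}\|\rot\check{\bm q}\|$.

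The main obstacle is the choice of $\Pi_h$: one needs a projection that is simultaneously (i) defined on vector fields of regularity only $H^{\alpha}$ with $\alpha$ just above $1/2$ (but with $\grad\rot$ in $L^2$), (ii) landing in $V_h$ rather than in the ambient N\'ed\'elec space, and (iii) commuting with $\rot$ on $\check{\bm q}$. If the canonical degrees of freedom of $V_h$ require more smoothness than $\check{\bm q}$ possesses, I would substitute a smoothed commuting projection of Christiansen--Winther/Schoeberl type adapted to the complex in \cite{hu2020simple}, which absorbs the low global regularity while preserving the $h^{\alpha}$ approximation rate.
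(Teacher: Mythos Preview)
Your argument is correct and is precisely the Hodge-mapping argument that the paper invokes by citing \cite[Lemma~7.6]{Monk2003} and \cite[Lemma~4.5]{hiptmair2002finite}; the paper provides no details beyond those references. Your worry about the regularity needed for $\Pi_h$ resolves here because $\rot\check{\bm q}=\rot\bm q_h$ is already a piecewise polynomial in $H^1(\Omega)$, so together with $\check{\bm q}\in H^{\alpha}(\Omega)$, $\alpha>1/2$, the canonical interpolant for $V_h$ in \cite{hu2020simple,WZZelement} is well defined, commutes with $\rot$, and enjoys the required $h^{\alpha}$ approximation rate.
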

\begin{proof}
Proceeding as the proof of \cite[Lemma 7.6]{Monk2003} or \cite[Lemma 4.5]{hiptmair2002finite}, we can complete the proof.
\end{proof}

\begin{lemma}\label{sigmaest}
	Under the assumptions of Theorem \ref{regularity-withoutbc}, the solutions of  \eqref{mvprob-f-u} and \eqref{source-s1-u} satisfy 
\[\left\|\sigma-\sigma_{h}\right\|\leq Ch^{\alpha}\|\bm f\|.\]
\end{lemma}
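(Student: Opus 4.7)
The plan is to recognize, via the Galerkin-orthogonality identity \eqref{ortho-property-3}, that $\sigma_h$ is the $H^1(\Omega)$-Ritz projection of $\sigma$ onto $S_h$, and then to extract the $L^2$ rate by an Aubin--Nitsche duality argument applied to a Neumann problem for $-\Delta+I$.

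First, setting $e := \sigma - \sigma_h = S\bm f - S_h\bm f$, the identity \eqref{ortho-property-3} reads
\[
(\grad e,\grad \tau_h) + (e,\tau_h) = 0, \qquad \forall \tau_h \in S_h,
\]
which is precisely the Galerkin orthogonality with respect to the coercive $H^1$-bilinear form $(\grad\cdot,\grad\cdot)+(\cdot,\cdot)$. Therefore $\sigma_h$ is the best $H^1$-approximation of $\sigma$ in $S_h$; in particular, taking $\tau_h = 0$ in the best-approximation estimate yields the uniform control
\[
\|e\|_1 \leq \|\sigma\|_1 \leq C\|\bm f\|,
\]
where the last inequality is the stability bound for $S$ recorded in \eqref{boundedness}.

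Next I would introduce the auxiliary dual problem: find $\phi \in H^1(\Omega)$ such that
\[
(\grad \phi,\grad \tau) + (\phi,\tau) = (e,\tau), \qquad \forall \tau \in H^1(\Omega).
\]
Since $\Omega$ is a Lipschitz polygon, standard regularity for the Neumann Laplacian (\cite[Theorem 3.18]{Monk2003}) supplies an exponent $\alpha>1/2$, dictated only by the reentrant angles of $\Omega$ and thus compatible (by taking the minimum) with the $\alpha$ appearing in Theorem \ref{regularity-withoutbc}, together with the estimate $\|\phi\|_{1+\alpha}\leq C\|e\|$. Testing this dual problem with $\tau=e$, subtracting $\pi_h\phi\in S_h$ via the Galerkin orthogonality above, and applying Cauchy--Schwarz then gives
\[
\|e\|^2 = (\grad(\phi-\pi_h\phi),\grad e) + (\phi-\pi_h\phi,e) \leq \|\phi-\pi_h\phi\|_1\,\|e\|_1.
\]
Combining the interpolation estimate $\|\phi-\pi_h\phi\|_1 \leq Ch^\alpha\|\phi\|_{1+\alpha}$, the regularity of $\phi$, and the previously-obtained $H^1$-bound on $e$ produces the desired $\|\sigma-\sigma_h\| \leq Ch^\alpha\|\bm f\|$.

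The main obstacle, and it is a mild one, is matching regularity exponents: the $\alpha$ produced by the dual Neumann problem need not coincide with the $\alpha$ furnished by Theorem \ref{regularity-withoutbc}. Both exponents are determined by the interior angles of $\Omega$, however, so we may harmlessly replace either by the smaller of the two and keep a single $\alpha>1/2$ throughout. No use of the Hodge mapping of Lemma \ref{checkqconv} is needed at this step; the argument for $\sigma$ decouples completely from the vector part because $\grad\tau_h$ is curl-free and so the $H(\grad\rot)$-inner product collapses to the $L^2$ inner product on such test functions.
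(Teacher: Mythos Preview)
Your proposal is correct and follows essentially the same Aubin--Nitsche duality argument as the paper: both use the Galerkin orthogonality \eqref{ortho-property-3} to identify $\sigma_h$ as the $H^1$-Ritz projection of $\sigma$, introduce the same Neumann dual problem for $-\Delta+I$ with data $\sigma-\sigma_h$, and invoke the polygonal regularity theorem from Monk. The only cosmetic differences are that the paper subtracts the discrete Galerkin solution $\check\sigma_h$ of the dual problem (bounding it via C\'ea) whereas you subtract the interpolant $\pi_h\phi$ directly, and the paper controls $\|\sigma-\sigma_h\|_1$ by the stability bounds \eqref{boundedness} on both $S$ and $S_h$ rather than by the best-approximation property; neither variant changes the argument.
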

\begin{proof}
We introduce the following auxiliary problem: find $\check{\sigma} \in H^{1}(\Omega)$ such that
\begin{align}\label{aux-sigma} 
	(\grad \check{\sigma}, \grad \tau)+(\check{\sigma}, \tau)=\left(\sigma-\sigma_{h}, \tau\right), \quad \forall\tau \in H^{1}(\Omega).
\end{align}
The discrete problem is to find $\check{\sigma}_{h} \in S_{h}$ such that
$$
(\grad \check{\sigma}_h, \grad \tau_h)+(\check{\sigma}_h, \tau_h)=\left(\sigma-\sigma_{h}, \tau_h\right), \quad \forall\tau_h \in S_h.
$$
According to Theorem 3.18 in \cite{Monk2003} again, there exists the same constant $\alpha>1/2$ such that
$$
\|\check{\sigma}\|_{1+\alpha} \leq C\left\|\sigma-\sigma_{h}\right\|.
$$
From the Ce\'a lemma, we have
\[\left\|\check\sigma-\check\sigma_{h}\right\|_{1}\leq C h^{\alpha}\|\check{\sigma}\|_{1+\alpha}\leq C h^{\alpha}\left\|\sigma-\sigma_{h}\right\|.\]
Taking $\tau=\sigma-\sigma_h$ in \eqref{aux-sigma} and applying \eqref{ortho-property-3}, we obtain
\begin{align*}
	&\left(\sigma-\sigma_{h}, \sigma-\sigma_{h}\right) =\left(\check{\sigma}, \sigma-\sigma_{h}\right)+\left(\grad \check{\sigma}, \grad\left(\sigma-\sigma_{h}\right)\right) \\ 
	&=\left(\check{\sigma}-\check{\sigma}_{h}, \sigma-\sigma_{h}\right)+\left(\grad\left(\check{\sigma}-\check{\sigma}_{h}\right), \grad\left(\sigma-\sigma_{h}\right)\right) \\ 
	& \leq \left\|\check\sigma-\check\sigma_{h}\right\|_{1} \left\|\sigma-\sigma_{h}\right\|_{1}\leq C h^{\alpha}\left\|\sigma-\sigma_{h}\right\|\left\|\sigma-\sigma_{h}\right\|_{1},
\end{align*}
which together with \eqref{boundedness} leads to
\[\left\|\sigma-\sigma_{h}\right\|\leq Ch^{\alpha}\left\|\sigma-\sigma_{h}\right\|_{1}\leq Ch^{\alpha}\|\bm f\|.\]
\end{proof}
We are now in a position to estimate $\|T-T_h\|$.
\begin{theorem}
	Under the assumptions of Theorem \ref{regularity-withoutbc}, we have
\[\left\|T-T_{h}\right\|_{L^2\otimes\mathbb V\rightarrow L^2\otimes \mathbb V}\leq Ch^{\alpha}.\]
\end{theorem}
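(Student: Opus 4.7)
The plan is an Aubin--Nitsche duality argument paralleling the structure of Lemma~\ref{sigmaest} but applied to the primary variable. Fix $\bm f\in L^2(\Omega)\otimes\mathbb V$, write $\bm g=\bm u-\bm u_h=(T-T_h)\bm f$, and introduce the dual problem: seek $(\bm w,\eta)\in H_{\rot}(\grad\rot;\Omega)\times H^1(\Omega)$ solving \eqref{mvprob-f-u} with right-hand side $\bm g$ in place of $\bm f$, together with its Scheme~\ref{algo1}-type discretization $(\bm w_h,\eta_h)=(T_h\bm g,S_h\bm g)\in V_h\times S_h$. Since the bilinear form is symmetric this is the adjoint problem, so Theorem~\ref{regularity-withoutbc} gives $\|\bm w\|_{\alpha}+\|\rot\bm w\|_{1+\alpha}\leq C\|\bm g\|$; moreover the second dual equation, tested against $\tau\in H^1(\Omega)$ and integrated by parts, implies $\bm w\cdot\bm n=0$ and $\div\bm w=-\eta$.

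Next I would test the first dual equation with $\bm v=\bm g$, insert the Galerkin orthogonalities \eqref{ortho-property} with $\bm v_h=\bm w_h$ and $\tau_h=\eta_h$, and apply Lemma~\ref{integrationbyparts} (using $\bm w\cdot\bm n=0$ and $\div\bm w=-\eta$) to rewrite the cross term $-(\grad(\sigma-\sigma_h),\bm w)$ as $(\sigma-\sigma_h,\eta)$. A short telescoping computation then produces the identity
\begin{align*}
\|\bm g\|^2 =\,& (\bm w-\bm w_h,\bm g)_{H(\grad\rot;\Omega)}+(\grad(\sigma-\sigma_h),\bm w-\bm w_h)\\ &+(\grad(\eta-\eta_h),\bm g)-(\sigma-\sigma_h,\eta-\eta_h).
\end{align*}
Each of the four terms will then be bounded by Cauchy--Schwarz using three ingredients: (i)~a rate estimate $\|\bm w-\bm w_h\|_{H(\grad\rot;\Omega)}\leq Ch^{\alpha}\|\bm g\|$ for the dual Galerkin approximation, (ii)~Lemma~\ref{sigmaest} and its dual analog to obtain $\|\sigma-\sigma_h\|\leq Ch^{\alpha}\|\bm f\|$ and $\|\eta-\eta_h\|\leq Ch^{\alpha}\|\bm g\|$, and (iii)~the uniform stability bounds from \eqref{boundedness}. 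After absorbing the terms proportional to $\|\bm g\|^2$ into the left-hand side for $h$ small enough, this yields $\|\bm g\|\leq Ch^{\alpha}\|\bm f\|$, which is exactly the claim.

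The main obstacle is ingredient~(i): the source-convergence theorem stated earlier cannot be applied directly to the dual problem, because that bound involves $\|\eta\|_{1+\alpha}$ and the dual multiplier $\eta$ is in general only $H^1$ when the data $\bm g$ is merely $L^2$. This is precisely where the Hodge mapping of Lemma~\ref{checkqconv} becomes essential: splitting the discrete error $\Pi_h\bm w-\bm w_h\in V_h$ via the discrete Hodge decomposition into a piece in $\mathfrak Z_h=\grad S_h$ and a piece in $\mathfrak Z_h^{\perp}\cap V_h$, then passing the latter to its continuous preimage in $H(\rot;\Omega)\cap H_0(\div 0;\Omega)$ trades the missing regularity of $\eta$ for the available $\alpha$-regularity of $\rot\bm w$, at a cost controlled by $Ch^{\alpha}\|\rot\bm w\|$. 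Combined with the discrete Poincar\'e inequality already recorded in Section~4 and the commuting interpolation of \cite{hu2020simple}, this produces the required rate and closes the argument.
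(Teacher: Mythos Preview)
Your duality identity is correct, and your plan for ingredient~(i) via the discrete Hodge decomposition of $\Pi_h\bm w-\bm w_h$ and Lemma~\ref{checkqconv} can indeed be made to work. The genuine gap is the third term
\[
(\grad(\eta-\eta_h),\bm g).
\]
With the ingredients you list, Cauchy--Schwarz gives $\|\grad(\eta-\eta_h)\|\,\|\bm g\|$, and neither~(ii) nor~(iii) yields a rate for $\|\grad(\eta-\eta_h)\|$: the dual multiplier $\eta$ solves $(\grad\eta,\grad\tau)+(\eta,\tau)=(\bm g,\grad\tau)$ with data $\bm g\in L^2$ only (recall $\bm u_h\notin H(\div)$ in general), so $\eta\in H^1$ but not $H^{1+\alpha}$, and $\|\eta-\eta_h\|_1$ carries no $h^{\alpha}$. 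Nothing you wrote produces a small constant to absorb, so the argument does not close as stated. The Hodge mapping would have to be deployed a second time, now on the \emph{primal} side (e.g.\ decomposing $\bm u_h$ itself), and that is different from the role you assigned it.

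The paper avoids this difficulty by arranging the orthogonality differently. Instead of keeping $\bm g$ and seeking rates on the dual errors, it uses \eqref{ortho-property} once more to replace $\bm g$ by the primal interpolation error $\bm u-\Pi_h\bm u$, obtaining
\[
\|\bm g\|^{2}=(\tilde{\bm u}-\tilde{\bm u}_h,\bm u-\Pi_h\bm u)_{H(\grad\rot)}+(\grad(\tilde\sigma-\tilde\sigma_h),\bm u-\Pi_h\bm u)+(\sigma-\sigma_h,\tilde\sigma_h)-(\grad(\sigma-\sigma_h),\tilde{\bm u}_h).
\]
Now the $h^{\alpha}$ in the first two terms comes from $\|\bm u-\Pi_h\bm u\|_{H(\grad\rot)}$, which is controlled by the available regularity of the \emph{primal} solution; only the stability \eqref{boundedness} of the dual quantities is needed. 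The Hodge mapping is then used on the last term, by decomposing the discrete dual solution $\tilde{\bm u}_h$ into $\mathfrak Z_h\oplus\mathfrak Z_h^{\perp}$ and applying Lemma~\ref{checkqconv} to the $\mathfrak Z_h^{\perp}$-part; this is a different placement of the tool than in your outline and is what makes the proof go through without ever requiring $\|\eta\|_{1+\alpha}$ or an energy-norm rate for the dual problem.
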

\begin{proof}
We shall  prove $\|\bm u-\bm u_h\|\leq Ch^{\alpha}\|\bm f\|.$ 
Denote $\tilde{\bm u}=T(\bm u-\bm u_h)$, $\tilde{\bm u}_h=T_h(\bm u-\bm u_h)$, $\tilde\sigma=S(\bm u-\bm u_h)$, and $\tilde\sigma_h=S_h(\bm u-\bm u_h)$. 
Proceeding as the dual argument, we have
\begin{align*}
&\left(\bm{u}-\bm{u}_{h}, \bm{u}-\bm{u}_{h}\right) =\left(\tilde{\bm{u}}, \bm{u}-\bm{u}_{h}\right)_{H(\grad\rot;\Omega)}+\left(\grad \tilde{\sigma}, \bm{u}-\bm{u}_{h}\right) \\ 
=&\left(\tilde{\bm{u}}-\tilde{\bm{u}}_{h}, \bm{u}-\bm{u}_{h}\right)_{H(\grad\rot;\Omega)}+\left(\grad\tilde{\sigma}, \bm{u}-\bm{u}_{h}\right)-\left(\grad (\sigma- \sigma_{h}), \tilde{\bm{u}}_{h}\right) \text{\quad\quad\ \ \ by \eqref{ortho-property} }\\
=&\left(\tilde{\bm{u}}-\tilde{\bm{u}}_{h}, \bm{u}-\Pi_{h} \bm{u}\right)_{H(\grad\rot;\Omega)}+\left(\grad\left(\tilde{\sigma}-\tilde{\sigma}_{h}\right), \bm{u}-\Pi_{h} \bm{u}\right)+\left(\sigma-\sigma_{h}, \tilde{\sigma}_{h}\right)\text{\quad by \eqref{ortho-property},\ \eqref{ortho-property-3} }\\
&-\left(\grad (\sigma-\sigma_{h}), \tilde{\bm{u}}_{h}\right)=:\text{I}+\text{II}+\text{III}+\text{IV},
\end{align*}
where $\Pi_{h}$ is the canonical interpolation to $V_h$.
Applying \eqref{boundedness} and the approximation property of $\Pi_h$, we have
\begin{align*}
\text{I}+\text{II}&\leq \|\bm{u}-\Pi_{h} \bm{u}\|_{H(\grad\rot;\Omega)}\left(\|\tilde{\bm{u}}-\tilde{\bm{u}}_{h}\|_{H(\grad\rot;\Omega)}+\|\grad\left(\tilde{\sigma}-\tilde{\sigma}_{h}\right)\|\right)\\
&\leq C\|\bm{u}-\Pi_{h} \bm{u}\|_{H(\grad\rot;\Omega)}\|\bm u-\bm u_h\|\\
&\leq Ch^{\alpha}\|\bm u-\bm u_h\|\left(\|\bm u\|_{\alpha}+\|\rot\bm u\|_{1+\alpha}\right)\\
&\leq Ch^{\alpha}\|\bm f\|\|\bm u-\bm u_h\|.
\end{align*}
Using Lemma \ref{sigmaest}, we get
\[\text{III}\leq Ch^{\alpha}\|\bm f\|\|\tilde \sigma_h\|\leq  Ch^{\alpha}\|\bm f\|\|\bm u-\bm u_h\|.\]
Now it remains to estimate IV. We decompose $\tilde{\bm u}_h= \bm p_h+\bm q_h$ with $\bm p_h\in \mathfrak Z_h$ and $\bm q_h\in \mathfrak Z_h^{\perp}\cap V_h$. Then 
\[\text{IV}=\left(\grad(\sigma-\sigma_h),\tilde{\bm u}_h\right)=\left(\grad(\sigma-\sigma_h),{\bm q}_h\right)+\left(\grad(\sigma-\sigma_h),{\bm p}_h\right)=:\text{IV}_{\text{I}}+\text{IV}_{\text{II}}.\]
 Applying  Lemma \ref{checkqconv}, we obtain
\[\text{IV}_{\text{I}}=\left(\grad(\sigma-\sigma_h),\bm q_h-\check{\bm q}\right)\leq Ch^{\alpha}\|\rot \bm q_h\|\|\bm f\|\leq Ch^{\alpha}\|\rot\tilde{\bm u}_h\|\|\bm f\|\leq Ch^{\alpha}\|\bm u-\bm u_h\|\|\bm f\|.\]
Since $\mathfrak Z_h=\grad S_h$, there exists a function $\phi_h\in S_h$ satisfying $(\phi_h,1)=0$ and $\bm p_h=\grad\phi_h$. By \eqref{ortho-property-3}, we get
 \begin{align*}
	\text{IV}_{\text{II}}=&\big(\grad(\sigma-\sigma_h),\grad\phi_h \big)
	=-\big(\sigma-\sigma_h,\phi_h\big)\leq C\|\sigma-\sigma_h\|\|\grad\phi_h\|\\
	=&C\|\sigma-\sigma_h\|\|\bm p_h\|\leq C\|\sigma-\sigma_h\|\|\tilde{\bm u}_h\|\leq Ch^{\alpha}\|\bm f\|\|\bm u-\bm u_h\|.
\end{align*}
Collecting all the estimates, we complete the proof.
\end{proof}

\section{Conclusion}
In this paper, we showed that inappropriate discretizations for high order curl problems lead to spurious solutions. Comparing to the spurious solutions of the Maxwell and biharmonic equations, the differential structure and the high order nature of these operators bring in new ingredients, e.g., the failure of capturing zero eigenmodes by some finite element methods. This issue is rooted in the cohomological structures of the $\grad\rot$ complex, which further shows that the algebraic structures in the (generalized) BGG complexes \cite{arnold2020complexes} have a practical impact for computation (even though the $\grad\rot$ complex is the simplest example in the framework). Reliable numerical methods, e.g., mixed formulations with elements in a complex, should respect these cohomological structures.  The algebraic and analytic results for the $\grad\rot$ complex also have various consequences for the high order curl problems, e.g., equivalent variational forms and the well-posedness etc.

Future directions include, e.g., bounded cochain projections for the finite elements and kernel-capturing algebraic solvers.

\bibliographystyle{siam}
\bibliography{reference}
\vspace{1cm}

  \end{document}